\documentclass[11pt]{amsart}
\usepackage{amssymb, amsmath, amsthm}
\usepackage[utf8]{inputenc}

\usepackage{yhmath}

\usepackage[table,xcdraw,svgnames,dvipsnames]{xcolor}
\usepackage[colorlinks]{hyperref}
\hypersetup{
	allbordercolors=.,
	citecolor=NavyBlue,
	linkcolor=DarkRed,
	urlcolor=NavyBlue,
}

\usepackage{enumitem}

\usepackage[a4paper, centering]{geometry}

\usepackage{graphicx}
\graphicspath{{./pics/}}

\usepackage{scalerel,stackengine}

\usepackage{mathtools}

\newtheorem{theorem}{Theorem}
\newtheorem{proposition}[theorem]{Proposition}

\newtheorem{lemma}[theorem]{Lemma}
\newtheorem{corollary}[theorem]{Corollary}
\theoremstyle{remark}

\theoremstyle{definition}
\newtheorem{definition}[theorem]{Definition}
\theoremstyle{remark}
\newtheorem{remark}[theorem]{Remark}

\definecolor{verde}{RGB}{20,150,100}
\definecolor{purple}{RGB}{200,30,200}

\newcommand{\EEE}{\color{black}}

\def\R{\mathbb{R}}

\newcommand{\sm}{\setminus}

\newcommand{\Om}{\Omega}

\def \b{\beta}
\def \e{\varepsilon}

\newcommand{\abs}[1]{{\left|#1\right|}}

\begin{document}
\title[]{ 
Symmetry breaking  in the polygonal Szeg\"{o}--Weinberger inequality  as $p\to1^+$: \\ the  longest shortest-fence quadrilateral 
} 

\bigskip\bigskip

\vfill\eject 
\author[]{Beniamin Bogosel, Dorin Bucur, Ilaria Fragal\`a}

\thanks{}

\address[Beniamin Bogosel]{
	Faculty of Exact Sciences, Aurel Vlaicu University of Arad, 2 Elena Dr\u agoi Street, Arad, Romania}
\email {beniamin.bogosel@uav.ro}

\address[Dorin Bucur]{
Universit\'e  Savoie Mont Blanc, Laboratoire de Math\'ematiques CNRS UMR 5127 \\
  Campus Scientifique \\
73376 Le-Bourget-Du-Lac (France)
}
\email{dorin.bucur@univ-savoie.fr}

\address[Ilaria Fragal\`a]{
Dipartimento di Matematica \\ Politecnico  di Milano \\
Piazza Leonardo da Vinci, 32 \\
20133 Milano (Italy)
}
\email{ilaria.fragala@polimi.it}

\keywords{ shortest bisecting fence; relative perimeter; isoperimetric problem; shape optimization; validated numerics}
\subjclass{  52A40, 49Q10, 52B60, 52-08}

\makeatletter
\def\@setsubjclass{%
  \itshape 2020 Mathematics Subject Classification.\enspace
  \upshape\@subjclass\@addpunct.}
\makeatother\date{\today}

\begin{abstract}    We consider P\'olya's problem of finding, among convex sets of prescribed area, the one with the longest shortest fence, in the  polygonal   setting, namely when the class of competitors is restricted to polygons with a prescribed number of sides.
While it is straightforward to show that, among triangles, the optimal shape is the equilateral one, we prove that symmetry breaking occurs in the case of quadrilaterals: the optimal quadrilateral is not the square. More precisely, we identify it as a specific isosceles trapezium,   which is uniquely determined, 
  up to homotheties and rigid motions,  by an elementary equation for its base angle.
The proof combines analytical  
  arguments 
and   rigorous interval-arithmetic computations.  
\end{abstract} 

\maketitle

\section{Introduction} 
For any convex set $\Om \subset \R^2$ with area $|\Om|$, let $m(\Om)$ denote the length of its shortest bisecting fence:
\begin{equation}\label{f:sf} 
    m(\Om )=
    \inf \Big \{\mathcal H ^ 1 (\gamma):\gamma \text{ splits } \Om \text{ into two sets of area } \frac{| \Om | }{2} \Big \}\,.
\end{equation} 
In \cite{EFKNT}, the authors proved a long-standing conjecture of P\'olya, stating that the disk uniquely maximizes the length of the shortest bisecting fence among convex sets of prescribed area or, equivalently, maximizes the scaling-invariant energy
$$
\Phi (\Om )=\frac{m(\Om )}{\sqrt{ |\Om|}}
$$
among all convex planar sets.

In this paper, we address the polygonal counterpart of this problem, which consists in maximizing the shortest fence in the class $\mathcal P_N$ of convex polygons with $N$ sides under an area constraint or, equivalently, in studying
\begin{equation}\label{f:pbpol}
\sup \Big \{  \Phi ( \Om) \ :\ \Om  \in \mathcal P _N \Big \} \,.
\end{equation}   
This purely geometric problem is very natural and is closely connected both with some long-standing open questions in spectral geometry and with some optimization problems for polygons, which have attracted increasing interest in recent years.

Indeed, if $\lambda_p ( \Om)$ and $\mu_p ( \Om)$ denote, respectively, the first Dirichlet and Neumann eigenvalues of the $p$-Laplacian, namely
$$
\lambda_p (\Omega):= \inf_{\substack{u\in W^{1, p} _0(\Om)\sm \{0\}}} 
\frac{\displaystyle{\int_\Omega \abs{\nabla u}^p} }{\displaystyle{\int_\Omega |u |^p  }} \,, \qquad 
\mu_p (\Omega):= \inf_{\substack{u\in W^{1, p}(\Om)\sm \{0\}\\ \int_\Omega  |u | ^ { p-2} u =0}} 
\frac{\displaystyle{\int_\Omega \abs{\nabla u}^p} }{\displaystyle{\int_\Omega |u |^p  }} \,,
$$ 
then, in the limit as $p \to 1^+$, $\lambda_p(\Omega)$ and $\mu_p(\Om)$ converge to the Cheeger constant $h(\Om)$ and to the shortest fence $m(\Om)$. More precisely, one has 
\smallskip

\begin{eqnarray}
& \lim \limits _{p \to 1^+} \lambda _p (\Omega)
 =
\min \limits _ {\substack{u \in BV (\R ^2)\setminus \{0\} )\\ u= 0 \hbox{ on } \R ^ 2 \setminus  \Omega  }}  
 \Big \{  \displaystyle \frac{|Du| ( \R ^2)}{\int _\Omega |u |} \Big \} = 
\min \limits _ {\substack{E\subset  \Om }}  \Big \{  \displaystyle \frac{{\rm Per} (E, \R ^2)}{|E|} \Big \} 
=
 h (\Omega)\,, & \label{f:lambdap} 
 \\  \noalign{\bigskip}
& \lim \limits _{p \to 1^+}  \mu _ p (\Om)  
=  \min  \limits _ {\substack{u \in BV  (\Om; \R^+)\\  | \{ u >0  \} |  \leq  \frac{|\Om|  }{2} }} 
  \displaystyle \frac {|D u| (\Om)  } {\int_ \Om |u|  }  =  \min \limits _ {\substack{E\subset  \Om \\  |E|   \leq \frac{|\Om| }{2}}}  
 \displaystyle \frac {{\rm Per} (E, \Om) } {|E|}  = m ( \Om) \,, & \label{f:mup}
\end{eqnarray} 
 see \cite{FrKa} for the Dirichlet case and  \cite{Gaj, GajGar, cianchi89BUMI} for the Neumann one.

In \cite{BF2}, the second and third authors proved that the regular $N$-gon minimizes the Cheeger constant among polygons with prescribed area and $N$ sides, and interpreted this result as evidence in favour of P\'olya's conjecture on the optimality of the regular $N$-gon for the first Laplacian eigenvalue 
under an area constraint (see \cite{BB} and the references therein).

Thus, at first sight, the optimality of the regular $N$-gon proved in \cite{BF2} for the Cheeger constant, together with the optimality of the disk among all convex sets obtained in \cite{EFKNT} for the shortest fence, seems to suggest that symmetry should also hold for solutions to problem \eqref{f:pbpol}.

Nevertheless, a different picture emerges if one considers the opposite limiting case, namely $p \to +\infty$. Indeed, denoting by
$$
\rho_\Om:= \max_{x \in \overline \Omega} {\rm dist}(x,\partial \Omega)
$$
the inradius of $\Om$ and by
$$
D_\Om:= \max_{{x,y} \in \overline \Omega} |x-y|
$$
its diameter, one has
$$ 
\lim _{p \to + \infty} \lambda _p   (\Omega)^ {\frac{1}{p} }  =\frac{1}{\rho _\Om}\,,  \qquad 
\lim _{p \to + \infty} \mu _ p (\Om)  ^ {\frac{1}{p} }  = \frac{2}{D_\Om}
$$  
see \cite{JuLiMa} and \cite{EKNT15}, respectively, for the Dirichlet and Neumann cases.

At this point, a striking difference appears: among polygons in $\mathcal P_N$ with fixed area, the regular polygon has maximal inradius, but in general it does not have minimal diameter. This is true for odd $N$, but false when $N \geq 6$ is even; see \cite{R22, S58} and the survey paper \cite{Moss}. For $N=6$, the optimal polygon was identified by Graham \cite{graham, B61} and is now named after him. For even $N>6$, the optimal polygon, 
known as the ``largest small polygon'', a terminology reminiscent of the ``longest shortest fence'', is still unknown, and its identification remains a challenging problem in discrete geometry; see, for instance, \cite{AHS21} and the references therein. A similar phenomenon for nonlocal perimeters was discovered in our previous paper \cite{BBF}.

From this perspective, it is reasonable to expect that solutions to problem \eqref{f:pbpol}
 behave analogously to 
  largest small polygons, and to regard   these symmetry breaking phenomena    as evidence against the optimality of the regular $N$-gon for the first Neumann eigenvalue of the $p$-Laplacian  under an area constraint, for $p$ in neighbourhoods of $1$ and $+\infty$. In contrast, 
 for $p=2$,    numerical evidence  suggests that,  when the number of sides is small,    the polygonal 
 analogue of  the Szeg\"o-Weinberger inequality holds  true.  For quadrilaterals, this has been recently proved  in \cite{EO}.

  Actually,    as regards problem \eqref{f:pbpol},   symmetry breaking unequivocally emerges from the behaviour of  the map 
\begin{equation}\label{f:mapf} N \mapsto f ( N) := \Phi ( P_N ^*)\,,
\end{equation}
where $P _N ^*$ is a regular polygon with $N$ sides. We have  (see \cite{cianchitesi, cianchi89BUMI}) 
$$
f(2m)=
\frac{2\cos\left(\frac{\pi}{2m}\right)}
{\sqrt{\frac {m}{2} \sin\left(\frac{\pi}{m}\right)}}\,,
$$
while 
$$
f(2m+1)=
\sqrt{
\frac{
\frac{2\pi}{2m+1}
\left[
\cos^2\left(\frac{3\pi}{2(2m+1)}\right)
\cot\left(\frac{\pi}{2(2m+1)}\right)
+
\frac{1}{2}\sin\left(\frac{3\pi}{2m+1}\right)
+
\frac{3}{4}\sin\left(\frac{2\pi}{2m+1}\right)
\right]
}{
\frac{2m+1}{4}\sin\left(\frac{2\pi}{2m+1}\right)
}
}.
$$
 Since a $N$-gon can be approximated by $(N+1)$-gons,  a necessary condition for the optimality of $P _N ^*$ is the increasing monotonicity of $f$, which is immediately seen to be false
by looking at small values of $N$:    
$$
\begin{array}{c|c}
N & f(N) \\
\hline
3 & 1.4472025091 \\
4 & 1.4142135624 \\
5 & 1.5429217149 \\
6 & 1.5196713713 \\
7 & 1.5688937196 \\
8 & 1.5537739740 \\
\end{array}
$$

In this paper we solve problem \eqref{f:pbpol} in the first relevant case, namely $N=4$, by identifying the optimal quadrilateral, 
which is {\it not} the square. Indeed, while for triangles it is easy to see that the solution is the equilateral one (see Corollary \ref{c:triangle} in Section \ref{sec:prel}) we prove that, for quadrilaterals, the solution is a specific isosceles trapezium. More precisely, if $\mathcal Q$ denotes the class of convex quadrilaterals in $\R^2$, we consider the optimization problem
\begin{equation}\label{f:pb}
\sup \Big \{  \Phi ( Q) \ :\ Q \in \mathcal Q \Big \} \,,
\end{equation}  
and we prove the following result:

\begin{theorem}\label{t:trapezium} 
Up to homotheties and rigid motions, problem \eqref{f:pb} admits a unique solution $Q^*$, given by the isosceles trapezium of area $1$ with acute base angle $\alpha^*$ and height $\sqrt{\alpha^*}$, where $\alpha^*$ is the unique solution in $(\frac{\pi}{3}, \frac{\pi}{2})$ of the equation
$$
\left(\frac{\pi}{2}-\alpha\right)\left(\tan^2\alpha+\alpha^2\right)=\alpha^2\tan\alpha.
$$
 Moreover,  $Q^*$ has four shortest fences (a line segment  and three circular arcs, see Figure \ref{fig0}), with length
$$
m ( Q^*)=\sqrt  {\alpha ^*   } \approx 1.049685815487323.
$$ 
\end{theorem}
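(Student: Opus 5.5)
The plan is to make the structure of shortest fences in a convex polygon explicit first, and then turn \eqref{f:pb} into a finite-dimensional optimization over quadrilateral shapes. The known structure theory for relative isoperimetric sets in convex domains says a shortest bisecting fence in a convex polygon is either a straight segment joining two sides, or a circular arc meeting two sides orthogonally. Such an arc is centered at the vertex where the two lines meet, or, for adjacent sides, it cuts off that vertex. So for a polygon
$$
m(\Om)=\min\Big\{\min_{\text{segments}}\ell,\ \min_{\text{vertices } v}\sqrt{\theta_v|\Om|}\Big\}\quad\text{(suitably restricted)},
$$
where $\theta_v$ is the interior angle at $v$. The arc around $v$ bisecting area $|\Om|/2$ has length $\sqrt{\theta_v |\Om|}$, provided the corresponding sector $\{|x-v|<r\}$, with $\theta_v r^2/2=|\Om|/2$, lies in $\Om$ as a sector. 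Arcs centered at the intersection point of two non-adjacent sides must also be considered; for a trapezium these degenerate to segments. For triangles this gives the equilateral case directly (Corollary \ref{c:triangle}).

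For quadrilaterals I normalize $|Q|=1$. The smallest angle $\theta_{\min}\le\pi/2$ gives the upper bound $\Phi(Q)\le\sqrt{\theta_{\min}}$ whenever the sector is admissible. When the sector is not admissible, the segment fences near the vertex are even shorter, so I will bound $\Phi$ by a suitable segment fence instead. The trapezium in the statement has all vertex arcs equal to, or at least as long as, $\sqrt{\alpha^*}$. At the acute base angles the arc has length $\sqrt{\alpha^*}$, and the obtuse angles give longer arcs. One further arc or the straight mid-segment must also be balanced. The height $\sqrt{\alpha^*}$ makes the segment parallel to the bases at the right level equal to $\sqrt{\alpha^*}$, and the defining transcendental equation should encode the bisection condition at that level. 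I will verify this computation first, since it pins down $\alpha^*$ and the value $m(Q^*)=\sqrt{\alpha^*}$, with the four fences of Figure \ref{fig0} being equal.

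The optimality argument proceeds by cases. If some angle is $\le\alpha^*$, the arc bound gives $\Phi\le\sqrt{\alpha^*}$ when the sector is admissible. Otherwise all four angles lie in $(\alpha^*,\pi-\ldots)$, with sum $2\pi$, which pins the quadrilateral near a square or rectangle-like shape. In that regime the shortest bisecting segments (the minimum over directions of the bisecting chord) must be shown to be $<\sqrt{\alpha^*}$ unless $Q=Q^*$. For instance, the square has $m=1<\sqrt{\alpha^*}$ since $\alpha^*>1$. This requires estimating the bisecting chord length as a function of four shape parameters, e.g.\ two angles and two side ratios after normalization.

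I expect the main obstacle to be exactly this intermediate region, where neither an arc nor a single explicit chord wins by an obvious margin. There I will cover the compact parameter domain by boxes and use interval arithmetic to certify that $\min(\text{arcs},\text{chords})<\sqrt{\alpha^*}-\e$ on each box away from $Q^*$. Near $Q^*$ I will use a local argument: the first-order variations of the four active fence lengths, constrained by area, should admit no common ascent direction, as a Lagrange/KKT nondegeneracy computation. Together with strict concavity-type second-order information, this yields uniqueness. Finally, uniqueness of $\alpha^*$ in $(\pi/3,\pi/2)$ follows from a monotonicity check of $g(\alpha)=(\pi/2-\alpha)(\tan^2\alpha+\alpha^2)-\alpha^2\tan\alpha$, which is positive at $\pi/3$ and tends to $-\infty$ at $\pi/2$.
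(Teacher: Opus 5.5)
Your description of the competitors is wrong exactly where it matters. In a trapezium only the pair of \emph{parallel} sides degenerates, into the segment orthogonal to both, whose length is the height $d$. The legs $S_2,S_4$ still meet at a point $P_2$, and arcs centred there are genuine candidates, with value $\theta_2(|Q|+2|T_2|)$ by Lemma~\ref{l:formulas}(ii). One of the four fences of $Q^*$ is such an arc. The active fences of $Q^*$ are:
\begin{itemize}
\item the segment \emph{perpendicular} to the bases, of length $d$;
\item the two arcs at the acute vertices;
\item the arc centred at $P_2$.
\end{itemize}
With $|Q|=1$, the condition $d^2=\alpha$ gives $\ell_1=(1-\alpha\cot\alpha)/\sqrt\alpha$. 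The defining equation of $\alpha^*$ is then precisely
\[
\alpha=(\pi-2\alpha)\Big(1+\tfrac{\ell_1^2}{2}\tan\alpha\Big),
\]
i.e.\ the equality of the vertex-arc and $P_2$-arc candidates. It is not a bisection condition for a chord parallel to the bases; such a chord is not orthogonal to the legs, so it is never an active fence. Hence the verification of $Q^*$ you plan to do first would not reproduce the statement. Note also that the correct formula, Proposition~\ref{p:chat}, needs no admissibility restriction on sectors: $m^2\le\alpha_i|\Om|$ always holds.

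The same omission breaks your intermediate regime. Apply to $Q^*$, with bases horizontal, the area-preserving map $(x,y)\mapsto(x/(1+\e),(1+\e)y)$. For small $\e>0$ you get an isosceles trapezium whose angles all exceed $\alpha^*$. Its shortest bisecting chord is the segment between the bases, of length $(1+\e)\sqrt{\alpha^*}$. So $\min(\text{vertex arcs},\text{chords})>\sqrt{\alpha^*}$ arbitrarily close to $Q^*$, and only the $P_2$-arc (whose value drops because $\theta_2$ decreases while $|T_2|$ is preserved) excludes these shapes. The regime ``all angles $>\alpha^*$'' is not near squares either: $Q^*$ lies on its boundary.

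Your local step is also not yet an argument. At $Q^*$ four candidates $\phi_C,\phi_D,\phi_{P_1},\phi_{P_2}$ are active in a four-dimensional shape space (the parallel-side value $d^2$ continues smoothly as $\phi_{P_1}$, cf.\ Section~\ref{d11}). Clarke's condition $0\in{\rm co}\{\nabla\phi_i\}$ then forces the four gradients to share a nonzero null direction. First-order (KKT) information therefore cannot show that $\Phi$ decreases along that direction. You would need a second-order estimate valid on an \emph{explicit} neighbourhood that meets the numerically certified region, and you do not supply one.

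The paper avoids both issues. Lemma~\ref{l:laws} and Clarke's condition, applied at a hypothetical non-trapezium maximizer, reduce the problem to a short list of active-fence configurations. The delicate configurations yield identities such as \eqref{f:k1} and \eqref{f:kk1bis2}, which are shown to fail on explicit boxes around $Q^*$. Certified numerics are used only under the extra constraint $\phi_{P_1}=\phi_{P_2}$ (Proposition~\ref{d13}). Among trapezia, $\alpha=d^2=\phi_{P_2}$ then determines $Q^*$ algebraically.
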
 

\begin{figure}
	\centering
	\includegraphics[width=0.35\textwidth]{fig0.pdf}
	\caption{The quadrilateral $Q^*$ with the longest shortest fence and its four active fences. }
	\label{fig0}
\end{figure}
\bigskip

   Hereafter we give a short overview  about the proof of Theorem \ref{t:trapezium}. 

The existence of a solution to problem \eqref{f:pb} is straightforward via standard compactness and continuity arguments  (see Lemma \ref{l:existence}).   The heart of the matter is  the explicit identification of the unique solution with the 
isosceles trapezium $Q^*$. The main difficulty comes from the fact  that the shortest fence functional $m(Q)$, seen as a function of the   vertices of $Q$,   is not differentiable: indeed, it is itself defined as a minimum, and such minimum is possibly attained at several fences (that we call ``active fences''). 
Nevertheless, we can take advantage of the fact that 
the structure of active fences is quite rigid, leading in particular to   a   useful  representation formula for the length of the shortest fence (see Proposition  \ref{p:chat}); moreover, we are in a position to retrieve an optimality condition from nonsmooth analysis, namely Clarke's   necessary condition for local maximizers of a minimum (see Proposition \ref{p:clarke}).  These preliminary results, which play a crucial role in our approach, are given in Section \ref{sec:prel}.

The   structure of the  proof of Theorem \ref{t:trapezium} is then  outlined in Section \ref{sec:setup}. 
\  It consists of two parts:    we show first 
that an optimal quadrilateral is necessarily a trapezium (see Theorem \ref{t:parallel}), and  second  that,
if an optimal quadrilateral is a trapezium, it agrees necessarily with  a rescaling of $Q^*$  (see Theorem \ref{t:Qstar}). 
 For both issues, as initial step we need to  
perform a geometric analysis leading to identify, up to relabelling, 
all the possible configurations of the active fences of a candidate optimal quadrilateral.  
After presenting this analysis in Section \ref{sec:setup}, the proofs of Theorems \ref{t:parallel} and \ref{t:Qstar} are developed respectively in Sections \ref{sec:notrap} and \ref{sec:trap}.

To  prove Theorem \ref{t:parallel},  
 we assume that an optimal quadrilateral is {\it not} a trapezium,  and we show that all the possible configurations of its active fences lead to a contradiction. 
We point out that this goal is reached by means of  a  ``hybrid'' strategy.  In fact, while most cases are excluded 
via purely analytical methods, to rule out  some specific configurations we  
adopt  
an analytical argument near $Q^*$ and a numerical argument away from it.  
This must be understood in  the following sense:  we determine an explicit neighbourhood $\mathcal U$ of $Q^*$ such that,  inside $\mathcal U$,  analytical arguments show that  no quadrilateral can satisfy Clarke's  optimality condition, whereas,  outside $\mathcal U$,   certified numerical computations show that
no maximizer exists.    For clarity, the numerical part of the proof  is given in a separate section at the end of the paper. 

 Finally, to prove Theorem \ref{t:Qstar},      we proceed as follows:   assuming now that an optimal quadrilateral is a trapezium, 
we prove that    it cannot be a parallelogram,  and then we exclude all possible configurations of the active fences   in an optimal quadrilateral having exactly two parallel sides,  except for the configuration in which  two active fences have endpoints on opposite sides, and two more active fences have  endpoints on  consecutive sides. In this precise  situation, by imposing the equality of the four fences, 
we arrive at the conclusion that an optimal quadrilateral   must be  our isosceles trapezium $Q^*$.  
 
\bigskip

 We conclude this Introduction with some perspectives and open questions. 
\medskip 

-- {\it Comparison with largest small polygons, and guessing for higher $N$.} 
It is natural to compare  solutions to problem \eqref{f:pbpol}  with  largest small polygons which maximize the diameter at fixed area. 
When $N$ is odd, symmetry seems to be preserved for both problems. 
This fact, which is straightforward for triangles, 
  is a long-standing 
conjecture in case of largest small polygons, and numerical  computations suggest 
the extension of this conjecture to
 optimal polygons for problem \eqref{f:pbpol}. 
The difference occurs for $N$ even: when $N = 4$, by contrast with Theorem \ref{t:trapezium}, 
the problem of maximizing the diameter at fixed area 
is still elementary,  having infinitely many solutions (including the square) 
given by quadrilaterals with two perpendicular diagonals, both equal to the diameter (see \cite{Moss}). 
When $N = 6$,  the largest small hexagon is the Graham one  (see Figure \ref{fig:hex-oct-Graham}, left), 
while  an analytical determination of the solution to problem  \eqref{f:pbpol}
 seems quite challenging. However, 
numerical simulations show that neither the Graham hexagon nor the regular one is optimal 
 (see Figure \ref{fig:hex-oct-Graham},  centre).  
When $N \geq 8$, both problems remain open (see \cite{AHMX, BM24} and references therein). 
  For the longest shortest fence octagon, numerical simulations indicate again symmetry breaking (see Figure \ref{fig:hex-oct-Graham}, right).  Active fences (up to a tolerance 0.0001) are also indicated in the figures.

\begin{figure}
	\includegraphics[height=0.27\textwidth]{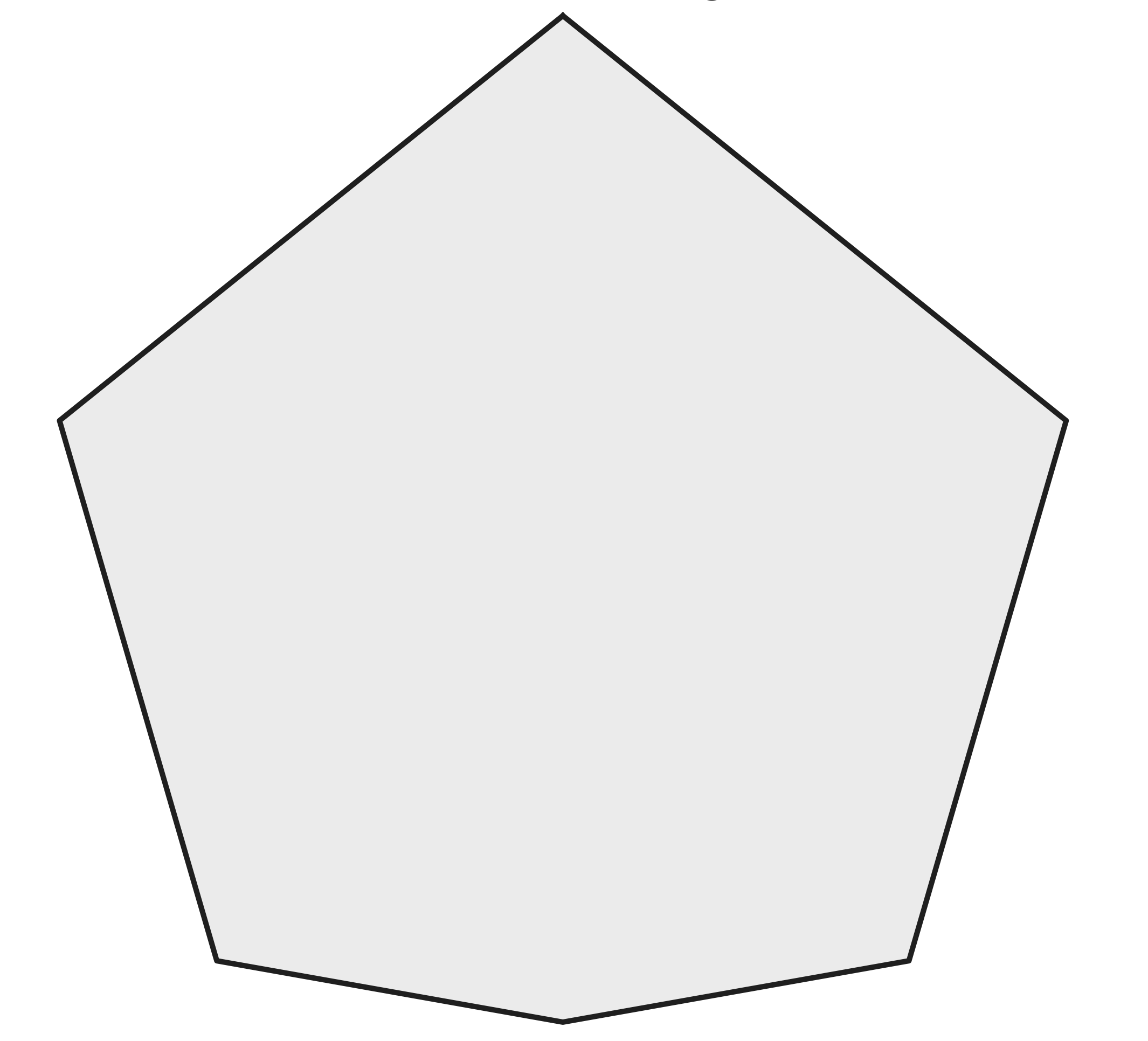}\qquad
	\includegraphics[height=0.27\textwidth]{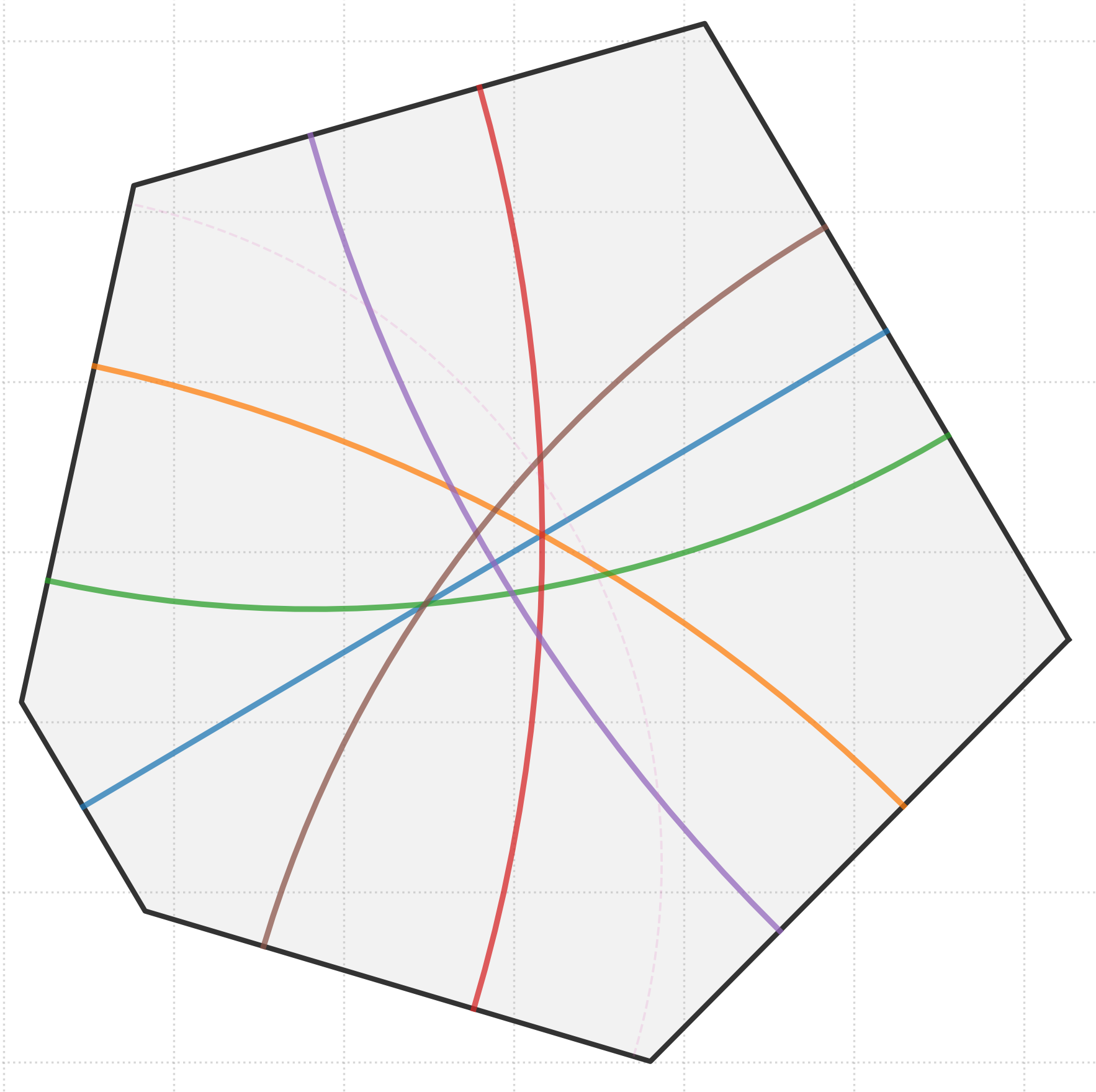}\qquad
	\includegraphics[height=0.27\textwidth]{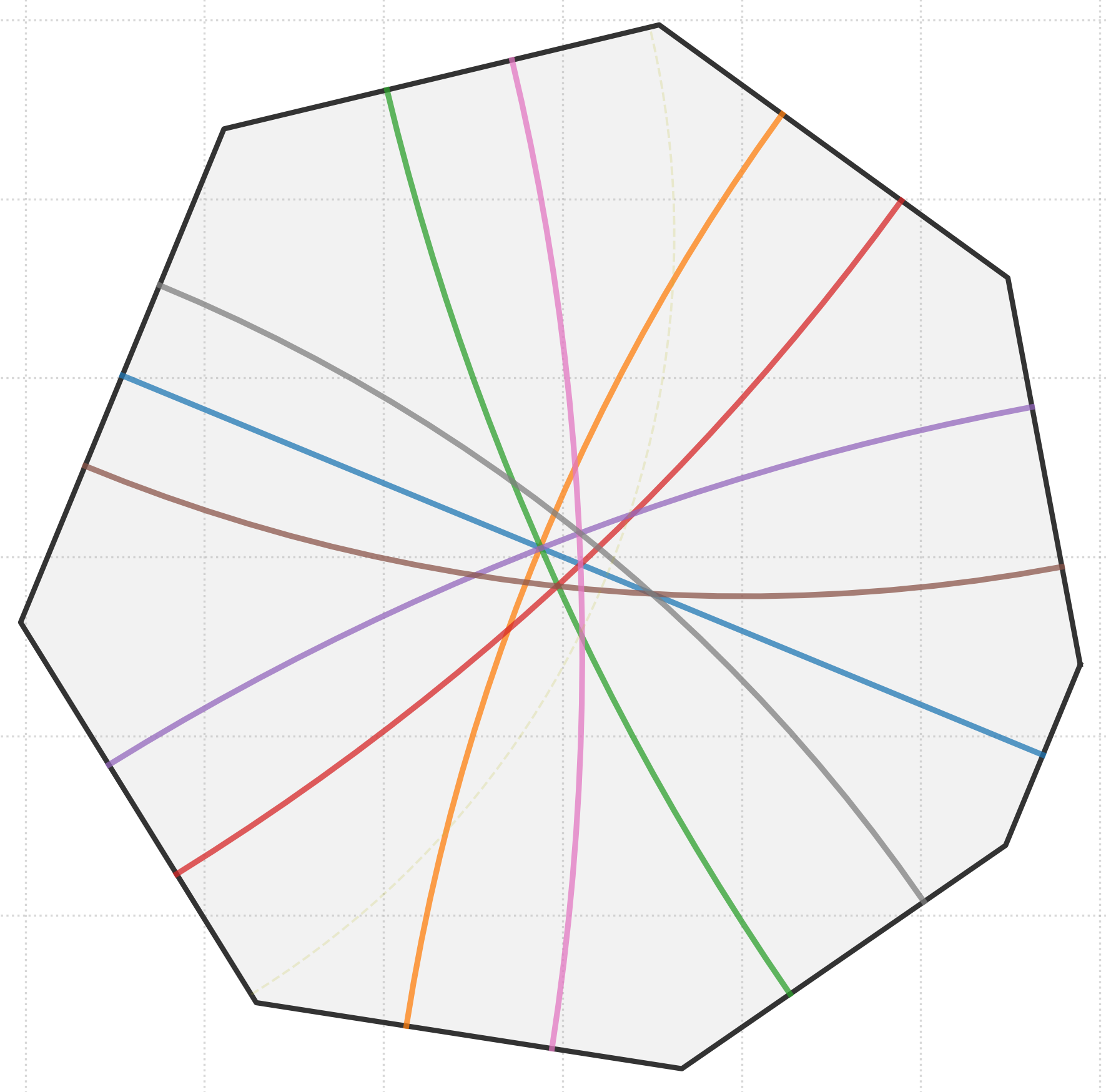}
	\caption{Graham hexagon (left)  and numerical simulations 
	for the longest shortest fence hexagon (center) and octagon  (right).  }
	\label{fig:hex-oct-Graham}
\end{figure}

\medskip

  -- {\it Polygons of given area with the longest shortest bisecting chord.}
An interesting variant of problem \eqref{f:pbpol} is obtained by considering
only bisecting chords (i.e., line segments) in the definition of the
shortest fence $m(\Om)$.
In this case, the unique maximizer of the corresponding functional $\Phi$
among all convex sets is the Auerbach triangle \cite{FP11}.
When competitors are restricted to polygons with $N$ sides, numerical
computations indicate that symmetry breaking occurs again for even $N$.
In particular, for $N=4$, the optimal quadrilateral appears to be another
isosceles trapezium, which should be possible to characterize by an approach
similar to that used in the proof of Theorem \ref{t:trapezium};
see Figure \ref{f:chords}.

 \begin{figure}
 \includegraphics[height=0.25\textwidth]{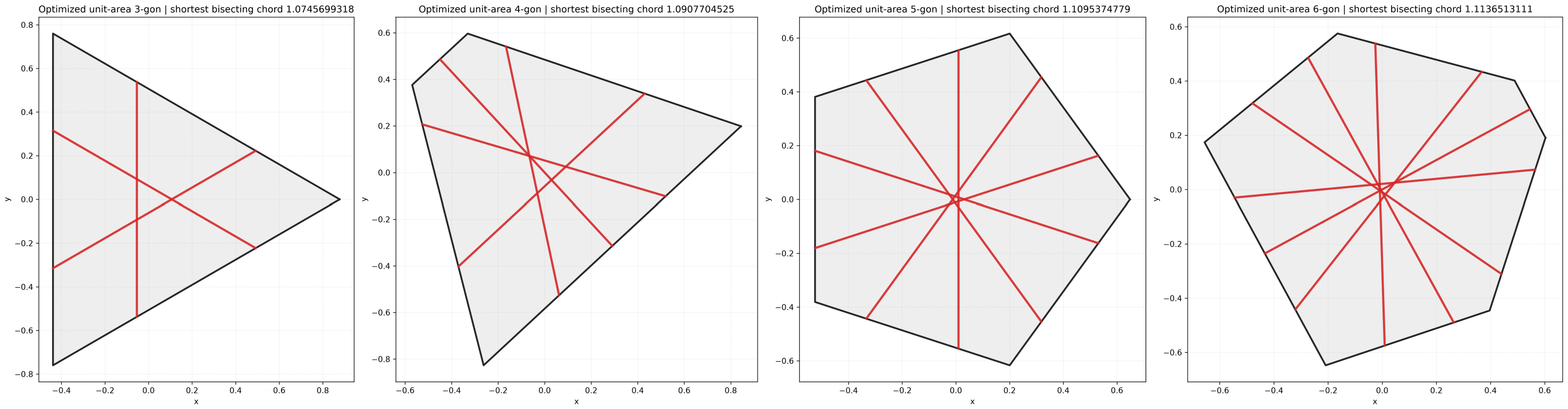} 
 \includegraphics[height=0.25\textwidth]{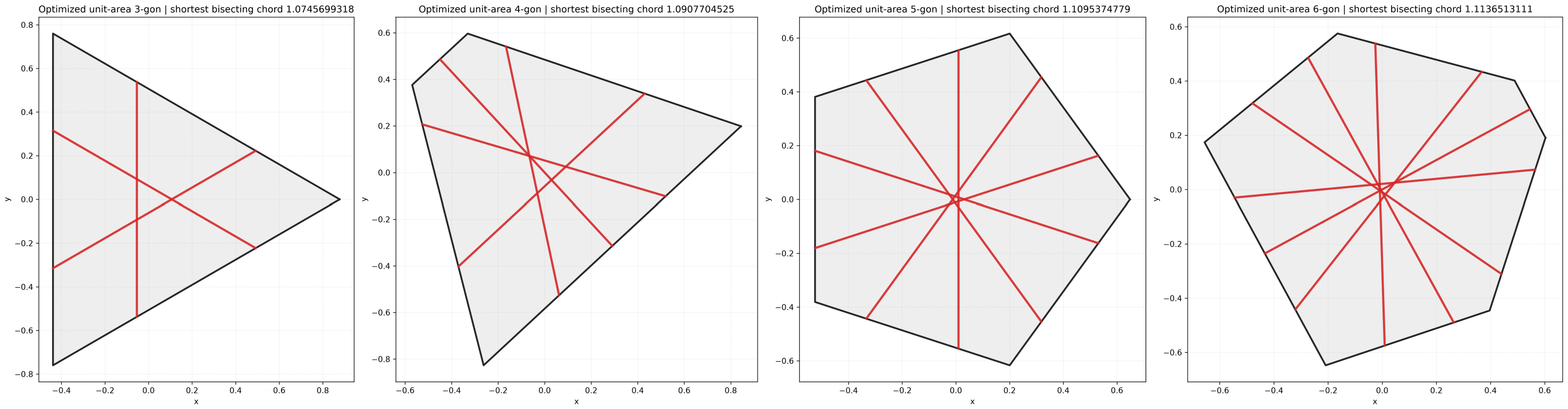} 
 \includegraphics[height=0.25\textwidth]{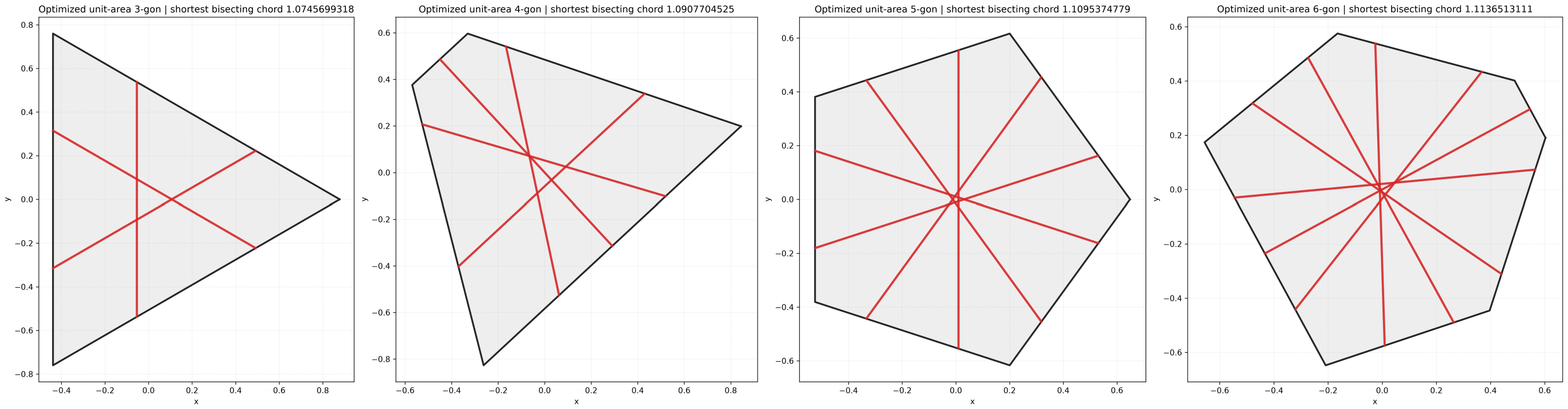} 
 \includegraphics[height=0.25\textwidth]{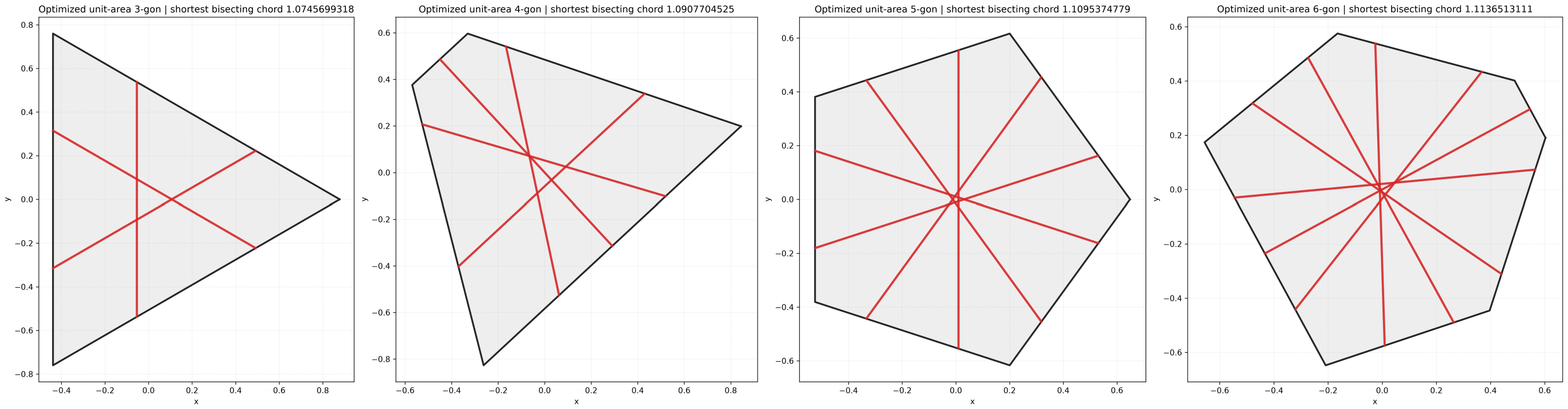} 
  \caption{ $N$-gons of given area and longest shortest bisecting chord, for   $N= 3, 4, 5, 6$.   }      
 \label{f:chords}   
 \end{figure}

\medskip

  -- {\it  Optimal partitions for  the shortest fence and for Neumann eigenvalues.}   In view of the approach developed in \cite{BFVV}, the study of problem \eqref{f:pbpol} may be useful to attack optimal partition problems in which the energy of each cell is given by its shortest fence. In turn, in light of \eqref{f:mup}, this is naturally related to optimal partition problems for Neumann eigenvalues.
Denoting by $\mathcal P_k(\Om)$ the family of all $k$-partitions of $\Om$, these problems can be written, respectively, as
$$
\max_{(\Om_i)_ i \in \mathcal P_k(\Om)} \min_ {i=1, \dots, k}  m (\Om_i)
\qquad \text{or} \qquad
\max_{(\Om_i)_ i \in \mathcal P_k(\Om)} \min_ {i=1, \dots, k} \mu_p (\Om_i).$$ 
In particular, in the linear case $p=2$, the problem for $\mu_p$ amounts to finding an optimal lower bound in Buser's inequality for the $k$-th Neumann Laplacian eigenvalue \cite[Theorem 8.2.1]{buser}.
Beyond the study of existence and regularity theory, as well as the derivation of explicit solutions for special geometries, an interesting question concerning these optimal partition problems is whether, in analogy with the Caffarelli-Lin conjecture for the first Dirichlet eigenvalue \cite{CaffLin}, the above maxima converge, after a suitable scaling, to the corresponding energy of the unit-area regular hexagon.
When the cost  of each cell is its diameter, this type of honeycomb asymptotic behavior has been conjectured in \cite{CSS}.

Gradient based simulations proposed above can be reproduced using the codes available in the repository:
\begin{center}
	\href{https://github.com/beniamin-bogosel/PolygonalFenceOptimization}{\nolinkurl{https://github.com/beniamin-bogosel/PolygonalFenceOptimization}}
\end{center}
The code contains a documentation file and instructions how to reproduce the results. 

\medskip

\section{Preliminary results}\label{sec:prel} 

 In this section we present some results 
about solutions to the  shortest fence problem
$m (\Om)$, which will be useful in order to determine the optimal quadrilateral $Q^*$ for problem \eqref{f:pb}. 
 For the sake of generality and 
since they may have deserve some independent interest (in particular 
the representation formula for the shortest fence given in Proposition \ref{p:chat}), 
 they are stated assuming that $\Om$ is a convex polygon with an arbitrary number of sides.
 At the end of the section, we also recall a useful optimality criterion from nonsmooth analysis (Proposition \ref{p:clarke}), 
which will be as well   a key tool  in the proof of Theorem \ref{t:trapezium}.

\begin{definition}  A curve $\gamma$ which is a minimizer for the shortest fence problem $m (\Om)$ in \eqref{f:sf}
will be called an   {\it active fence}  of $\Om$. 
   \end{definition}

\begin{lemma} [Existence and structure of active fences]\label{l:structure} 
For every convex polygon $\Om$,  the set of its active fences  is not empty, namely the infimum defining $m(\Om)$ is attained.
Moreover, every active fence is either a segment or an arc of circle, which meets orthogonally $\partial \Om$  away from its vertices. 
\end{lemma}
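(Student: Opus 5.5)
We reformulate $m(\Om)$ through the relative isoperimetric problem in \eqref{f:mup}, namely
$$
m(\Om)=\min\Big\{ {\rm Per}(E,\Om) : E\subset\Om,\ |E|=\tfrac{|\Om|}{2}\Big\}.
$$
We first check that this equals \eqref{f:sf}. If $\gamma$ splits $\Om$ into two halves, then either half $E$ satisfies ${\rm Per}(E,\Om)\le\mathcal H^1(\gamma)$. Conversely, the reduced boundary of a finite perimeter set $E$ of half area is a fence. This only requires handling $\gamma$ as a rectifiable set separating the domain.

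\emph{Existence.} We take a minimizing sequence $E_n$ with bounded relative perimeter. Since $\Om$ is a bounded Lipschitz domain, BV compactness gives $E_n\to E$ in $L^1$, so $|E|=|\Om|/2$ is preserved. Lower semicontinuity of ${\rm Per}(\cdot,\Om)$ then shows that $E$ is a minimizer, and $\partial E\cap\Om$ is an active fence.

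\emph{Interior regularity.} A minimizer $E$ is a volume-constrained minimizer of relative perimeter. By the classical regularity theory in dimension two, $\partial E\cap\Om$ is a finite union of smooth curves of constant curvature $\kappa$, the Lagrange multiplier, which is the same for all components. Hence each component is a segment or an arc of a circle of a common radius.

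\emph{Transversality.} At boundary points lying in the relative interior of a side of $\Om$, the first variation with tangential boundary variations (or a reflection argument across the flat side) gives orthogonal contact. It remains to show that the minimizing fence is a single arc, with two endpoints on $\partial\Om$, neither of which is a vertex.

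\emph{Connectedness.} In a convex domain, relative perimeter minimizers have connected boundary; this is a result of Sternberg–Zumbrun type, and for polygons it can be proved by hand. Suppose there were two components. By convexity we could translate or rotate one component, keeping the area fixed through the other, until that component reaches a vertex or a non-orthogonal position. This would decrease the perimeter strictly, or contradict the uniform curvature. A closed curve inside $\Om$ is excluded because a circle of the same area is beaten by a quarter- or half-disk at a corner or side. Hence the fence is a single arc $\gamma$ with endpoints on $\partial\Om$.

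\emph{Avoiding vertices.} This is the step I expect to be the main obstacle. Suppose an endpoint $P$ lies at a vertex with interior angle $\theta<\pi$. I would compare with a competitor obtained by cutting off near $P$: replace a small piece of $\gamma$ near $P$ by a short segment across the corner at distance $\varepsilon$. This saves length of order $\varepsilon$, while the area error is only $O(\varepsilon^2)$. That area error is then compensated by a normal perturbation elsewhere on $\gamma$, at cost $O(\varepsilon^2)$ since $\gamma$ is stationary. The net gain is of order $\varepsilon$, which contradicts minimality.

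The point to verify is that $\gamma$ genuinely arrives at the vertex with an angle that allows shortening. At a convex corner any curve entering the vertex can be shortened this way, unless it is tangent to a side, and the tangent case is handled symmetrically. Once vertices are excluded, orthogonality at both endpoints follows from the flat-side argument above, and this concludes the proof.
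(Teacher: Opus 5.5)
The paper does not prove this lemma; it cites \cite[Theorems 1 and 2]{cianchi89BUMI} and \cite[Propositions 1 and 2]{EFKNT}. Your self-contained route is therefore different. Most of its steps are standard: BV existence, constant-curvature interior regularity, orthogonality at points in the relative interior of a side via reflection, and exclusion of vertices. The genuine gap is the connectedness step, which is exactly what upgrades ``a finite union of arcs/segments of common curvature'' to ``a single segment or arc''. The mechanism you describe does not exist. A component whose endpoints lie in the relative interiors of two non-parallel sides, meeting them orthogonally, is an arc of a circle centred at the intersection point $O$ of their supporting lines. No translation or rotation keeps its endpoints on $\partial\Om$ while preserving its length. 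Rigid sliding is possible only for a segment between two parallel sides, or a semicircle resting on a single side, which is why the paper's translation trick works in the latter case. Moreover, reaching ``a non-orthogonal position'' does not by itself give a strict decrease of perimeter. A Sternberg--Zumbrun-type citation does not settle the matter either, since a polygon's boundary is not smooth at the vertices.

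A correct argument here is by concavity. Suppose each component $\gamma_i$ is known to have endpoints in the relative interiors of sides, with orthogonal contact, and move it concentrically. Let $A_i$ be the area it cuts off on the side of its centre $O_i$. Its length is then $\sqrt{2\theta_i(A_i+|\widetilde\Om_i|)}$, with $|\widetilde\Om_i|$ fixed; it is constant for a segment between parallel sides, and equals $\sqrt{4\pi A_i}$ for a closed circle. Shifting area $t$ through $\gamma_1$ and $-t$ through $\gamma_2$ keeps $|E|=|\Om|/2$. The total length is then a concave function of $t$, strictly concave unless both components are segments, and a strictly concave function cannot have a local minimum at $t=0$. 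Two segments are beaten by either one of them alone. A lone circle is beaten by the bound $m^2(\Om)\le\alpha|\Om|$, with $\alpha<\pi$, from the upper-bound part of the proof of Proposition \ref{p:chat}; do not compare it with a half-disk of area $|\Om|/2$, which need not fit inside $\Om$.

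This argument needs the endpoint structure first, so your vertex step must come before connectedness. Since that step is local, it applies to each component separately. It is also easier than you expect. If the tangent at the vertex makes angles $\beta_1,\beta_2$ with the two sides, then $\beta_1+\beta_2=\theta<\pi$, so some $\beta_i<\pi/2$. Cutting perpendicularly to that side then saves $\varepsilon(1-\sin\beta_i)+o(\varepsilon)$. Tangency is the most favourable case; the bad direction for a given side is $\beta_i\ge\pi/2$, not tangency.
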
 
\proof See \cite[Theorems 1 and 2]{cianchi89BUMI} (and also \cite[Propositions 1 and 2]{EFKNT}). \qed 
 
\medskip 
  \begin{lemma}[Endpoints of active fences]
For every convex polygon $\Om$,  no side  contains both the endpoints of the same active fence.
  \end{lemma}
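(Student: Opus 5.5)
We argue by contradiction, relying on Lemma \ref{l:structure}. Suppose an active fence $\gamma$ has both endpoints on the same side $S$ of $\Om$, and let $L$ be the line containing $S$. By Lemma \ref{l:structure}, $\gamma$ is a segment or an arc of circle, and it meets $\partial\Om$ orthogonally at two points of $S$ that are not vertices.

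\emph{Segment case.} If $\gamma$ is a segment, its endpoints both lie on $L$, so $\gamma\subset S\subset\partial\Om$. Then $\gamma$ does not split $\Om$ into two sets of positive area, which is absurd. (Alternatively, a segment orthogonal to $L$ at one point cannot meet $L$ at a second point.)

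\emph{Arc case.} If $\gamma$ is an arc of circle, orthogonality at both endpoints $a,b\in S$ forces the tangent lines at $a$ and $b$ to be perpendicular to $L$. Hence the centre of the circle lies on $L$, and $\gamma$ is a half-circle of radius $r=|a-b|/2$ centred at the midpoint $c$ of $[a,b]$. Since $\Om$ is convex and lies on one side of $L$, $\gamma$ cuts off the region $E$ enclosed by $\gamma$ and $[a,b]$, namely a half-disk. Thus $|E|=\pi r^2/2$ and $|E|=|\Om|/2$, so $\mathcal H^1(\gamma)=\pi r=\sqrt{\pi|\Om|}$.

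It remains to contradict minimality, by exhibiting a competitor shorter than $\sqrt{\pi|\Om|}$. I would try two options in turn.

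\emph{First option: a bisecting chord.} Since $\Om$ contains the half-disk $E$, its width in the direction of $L$ is at least $2r$. I would choose a bisecting chord of $\Om$ perpendicular to $L$, or more generally in a suitable direction. I expect this chord to have length below $\pi r$, but a direct bound is not obvious in general.

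\emph{Second option: a first-variation argument.} This is cleaner, and I expect the main obstacle to be making it rigorous. Translate the half-circle along $L$ towards a vertex $v$ of $S$, keeping it centred on $L$. As long as its endpoints stay in $S$, the enclosed area and length are unchanged, so we still have an active fence. Slide until one endpoint reaches the vertex $v$. At that position the arc meets $\partial\Om$ at a vertex, and it remains a minimizer, because its length equals $m(\Om)$ and it still bisects. This contradicts Lemma \ref{l:structure}, which says active fences meet $\partial\Om$ away from vertices.

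The subtle point in the second option is that the translated half-disk must stay inside $\Om$. It must not cross another side before the endpoint reaches $v$. If it does touch another side $S'$ first, the fence $\gamma$ touches $\partial\Om$ at an interior point. Then $\gamma\cap\Om$ splits into pieces, and I would argue that one sub-arc alone is a shorter fence bounding a set of area $|\Om|/2$, after rebalancing. That also contradicts minimality.

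So the plan is: reduce to the half-circle, translate it along $S$, and invoke the vertex exclusion of Lemma \ref{l:structure}, handling the contact-with-another-side case separately.
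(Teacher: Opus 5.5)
Your second option is essentially the paper's proof. You show the arc is a half-circle centred on $S$, translate it along $S$ with fixed radius, note that it remains an active fence while the half-disk stays inside $\Om$, and stop at the first contact with the rest of $\partial\Om$. When that contact is an endpoint reaching a vertex, Lemma \ref{l:structure} gives the contradiction, exactly as in the paper. Your segment case and the half-circle reduction are fine, and the first option can be dropped.

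The gap is the other kind of first contact, with a side $S'\neq S$. Your proposed repair, that one sub-arc alone is a shorter fence of area $|\Om|/2$ after rebalancing, does not work as stated. Let $p$ be the contact point. Then $\Om\setminus E$ splits into two pieces $F_1,F_2$, separated at $p$ and bounded by the sub-arcs $\gamma_1,\gamma_2$ of the fence. The sub-arc $\gamma_1$ alone separates $\Om$ into sets of area $|F_1|$ and $|\Om|/2+|F_2|$, never $|\Om|/2$. Restoring the area costs extra length, and you give no estimate showing this cost stays below the saved length $\mathcal H^1(\gamma_2)$.

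Nor is such an estimate automatic. Writing $I$ for the relative isoperimetric profile, the data $I(|F_i|)\le\mathcal H^1(\gamma_i)$, $|F_1|+|F_2|=|\Om|/2$ and $\mathcal H^1(\gamma_1)+\mathcal H^1(\gamma_2)=I(|\Om|/2)$ are perfectly consistent with plain subadditivity of $I$. Some strict inequality from elsewhere would be needed.

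The missing observation is that this contact is \emph{tangential}. At the first contact time the half-disk is still contained in $\Om$, so the supporting line of $S'$ passes through the point $p$ of the circle with the disk on one side, and is therefore tangent to the circle at $p$. The translated arc is still an active fence, yet it meets $\partial\Om$ non-orthogonally at an interior point. This contradicts Lemma \ref{l:structure} directly, with no rebalancing. That is how the paper closes the argument: it treats both possible first contacts (tangential contact with another side, or contact with a vertex) by the same lemma.
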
  
  
 \proof 
 Assume, by contradiction, that a side $S$ of $\Omega$ contains both endpoints of the same active fence. By Lemma \ref{l:structure}, this arc is a semicircle that cuts off from $\Om$ a half-disk of area $\frac{|\Om|}{2}$. We now translate the center of the disk along $S$ 
keeping its radius fixed. As long as the corresponding half-disk remains contained in $\Omega$, its boundary arc is still  a bisecting fence of the same length $m(\Omega)$, hence an active fence. We continue the translation up to the first contact of the circular arc with another side of $\Omega$ (in which case the contact is tangential), or with a vertex. In  both cases, this contradicts Lemma \ref{l:structure}. \qed

 \begin{lemma}[Formulas for the 
 length of active fences]\label{l:formulas}  
 Let $\Om$ be a convex polygon,   let $\gamma$ be an active fence of $\Om$,  and let
 $S'$, $S''$ be the sides hosting  the endpoints  of $\gamma$.   
  
 (i)    If $S'$ and $S''$ are consecutive, denoting by $\alpha$ the inner angle of $\Om$ at their common endpoint, it holds 
\begin{equation}\label{f:m1} 
 (\mathcal H ^ 1 (\gamma))  ^ 2 \EEE   = \alpha   |\Omega|  \,. 
\end{equation} 

\smallskip
(ii)  
If $S'$ and $S''$ are non-consecutive and non-parallel, letting $O$ be
the intersection of their supporting lines, $\theta$ be the angle
between them which contains $\Omega$, and 
\begin{equation}\label{f:omegatilde}
\widetilde\Omega
:=
\operatorname{co}(\Omega\cup\{O\})\setminus\Omega.
\end{equation} 
it holds  
\begin{equation}\label{f:m2} 
 (\mathcal H ^ 1 (\gamma))  ^ 2  
   =\theta\left(|\Omega|+2 {|\widetilde \Om|}\right)\,. 
\end{equation}

\smallskip
(iii)     If $S'$ and $S''$ are  parallel,  denoting by $d$ the distance between their support lines, it holds 
\begin{equation}\label{f:m3} 
   (\mathcal H ^ 1 (\gamma))  ^ 2 
   = d ^2\,. 
\end{equation}

\end{lemma}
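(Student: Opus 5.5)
The plan is to combine the structure result of Lemma \ref{l:structure} with the area constraint. By Lemma \ref{l:structure}, $\gamma$ is either a segment or a circular arc, and it meets $S'$ and $S''$ orthogonally. Since $\gamma$ is a bisecting fence, the region it cuts off has area exactly $|\Om|/2$. In each of the three cases we identify this region explicitly and compute $\mathcal H^1(\gamma)$ in terms of its area.

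\emph{Case (iii): parallel sides.} If $\gamma$ were a circular arc meeting two parallel lines orthogonally, its center would have to lie on both lines, which is impossible. Hence $\gamma$ is a segment orthogonal to both lines, and its length is $d$.

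\emph{Case (i): consecutive sides.} Let $V$ be the common vertex. A circle meeting both support lines orthogonally has its center on both lines, hence at $V$. (A segment orthogonal to two non-parallel lines is impossible, since the lines meet at $V$.) So $\gamma$ is an arc of a circle of radius $r$ centered at $V$. Since $\gamma$ has no endpoint at a vertex, it stays inside the angle, and the region cut off is the circular sector of angle $\alpha$. We need this sector to lie inside $\Om$; this holds because $\gamma$ splits $\Om$ and its endpoints lie on $S'$ and $S''$, so the piece of $\Om$ adjacent to $V$ is bounded by $\gamma$ and the two sides. Its area is $\alpha r^2/2 = |\Om|/2$, giving $r^2 = |\Om|/\alpha$. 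Then $\mathcal H^1(\gamma)^2 = \alpha^2 r^2 = \alpha|\Om|$.

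\emph{Case (ii): non-consecutive, non-parallel sides.} By the same orthogonality argument, $\gamma$ is an arc of radius $r$ centered at $O$, spanning the angle $\theta$, with length $\theta r$. Among the two components of $\Om\setminus\gamma$, consider the one on the side of $O$. Adding $\widetilde\Om$ to it yields exactly the circular sector of radius $r$ and angle $\theta$ centered at $O$. This is the main step to check carefully: one must show that $\widetilde\Om$ is contained in the sector, i.e.\ that the arc separates $O$ from the far part of $\Om$. This follows from convexity. The segments from $O$ to the endpoints of $\gamma$ lie along the support lines of $S'$ and $S''$. The region $\operatorname{co}(\Om\cup\{O\})$ is bounded by these two lines together with $\partial\Om$, and the part of it closer to $O$ than $\gamma$ consists of $\widetilde\Om$ and the near component. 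Thus the sector area is
\[
\frac{\theta r^2}{2} = \frac{|\Om|}{2} + |\widetilde\Om|,
\]
so $r^2 = (|\Om| + 2|\widetilde\Om|)/\theta$, and hence $\mathcal H^1(\gamma)^2 = \theta^2 r^2 = \theta(|\Om| + 2|\widetilde\Om|)$.

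The main obstacle is this last geometric containment: the near component of $\Om\setminus\gamma$ together with $\widetilde\Om$ tiles the sector without overlap. I would argue it by noting that the sector is a convex region containing $O$, bounded by the two support-line segments and the arc. The sides of $\Om$ between $S'$ and $S''$ on the $O$ side lie inside this sector and split it into $\widetilde\Om$ and the near component.
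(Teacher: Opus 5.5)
Your proposal is correct and follows the paper's route. You use Lemma~\ref{l:structure}: orthogonality forces the centre to be the common vertex or $O$, and forces a perpendicular segment in the parallel case. You then equate the sector area $\frac12\alpha r^2$, resp.\ $\frac12\theta r^2$, with $\frac{|\Omega|}{2}$, resp.\ $\frac{|\Omega|}{2}+|\widetilde\Omega|$, and use that the arc has length $\alpha r$, resp.\ $\theta r$.

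The paper takes the sector decomposition in (ii) for granted, so your extra justification there goes beyond it. It can be made crisp by noting that each ray from $O$ inside the angle $\theta$ meets $\Omega$ in an interval $[a(s),b(s)]$ with $a(s)\le r\le b(s)$, since $\gamma\subset\overline\Omega$. This is the polar description used in the proof of Proposition~\ref{p:chat}.
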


 \proof  (i)    In this case by Lemma \ref{l:structure} we have that $\gamma$  is an arc of circle. 
  If $r$  is its radius,   we have
\begin{equation}
\label{f:arco} 
   \frac12\alpha r^2=\frac {|\Om| } 2,
   \qquad
  \mathcal H ^ 1 (\gamma)   =\alpha r .
\end{equation}

(ii)  Again,   we have that $\gamma$  is an arc of circle. If $r$ is its radius,   we have  
\[
   \frac12\theta r^2= \frac  {|\Om|}{2}  + |\widetilde \Om|.
\]
Since $\gamma$ has length $\theta r$, formula \eqref{f:m2} follows   by   multiplying the above identity by $2\theta$. 

  (iii) In this case by  Lemma \ref{l:structure} we have that $\gamma$ is a line segment orthogonal to $S'$ and $S''$, so the statement is immediate.     
\qed 

\bigskip
 In the next result, starting from  Lemmas \ref{l:structure} and \ref{l:formulas}, we 
provide a useful representation formula for the shortest bisecting fence:  to some extent,  it
may be regarded as an analogue of the one for the Cheeger constant of a convex polygon
obtained by Kawohl and Lachand-Robert in \cite[Theorem 3]{KaLr}.  
Their formula is global, in the sense that all sides and angles enter simultaneously, and in fact it is obtained by using the family of inner parallel sets. 
By contrast, the formula  given by Proposition \ref{p:chat}   is local, in the sense that each candidate in the minimum is associated with a pair of sides only, 
and in fact it is obtained by using  distinct families of circular arcs, each one centred at the intersection of the supporting lines of two  sides. 
  This reflects also the fact that, while 
 the Cheeger problem involves the full perimeter of the competitor, 
 in the fence problem only the relative perimeter inside $\Omega$ is counted.

\begin{proposition}[Formula for the shortest bisecting fence]\label{p:chat}
Let $\Omega$ be a convex polygon. Then
\begin{equation}\label{f:minimum}
m^2(\Omega)
= \min\left\{
\alpha_i |\Omega|,\,
\theta_j\bigl(|\Omega|+2|\widetilde\Omega_j|\bigr),\,
d_k^2
\right\}\,,
\end{equation} 
where: 
\begin{itemize}
\item{} $\alpha_i$ ranges  over the inner angles of $\Omega$;
  \item{}  $\theta_j$ ranges over the angles associated with pairs of
non-consecutive, non-parallel sides as in Lemma \ref{l:formulas} (ii),
and $\widetilde\Omega_j$ is the corresponding polygonal region as in \eqref{f:omegatilde};  
\item{} $d_k$ ranges over the distances between pairs of supporting lines containing two  parallel sides of $\Omega$.
\end{itemize} 
\end{proposition}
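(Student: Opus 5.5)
The formula is proved by two inequalities. The inequality $m^2(\Omega)\ge$ RHS comes from the existence and structure of active fences. The inequality $m^2(\Omega)\le$ RHS comes from exhibiting, for each candidate in the minimum, an admissible bisecting curve whose squared length is at most that candidate.

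\emph{Lower bound.} By Lemma \ref{l:structure} an active fence $\gamma$ exists. By the lemma on endpoints of active fences, its two endpoints lie on two distinct sides $S',S''$, and these sides are either consecutive, non-consecutive and non-parallel, or parallel. In each case Lemma \ref{l:formulas} gives $m^2(\Omega)=(\mathcal H^1(\gamma))^2$ equal to one of $\alpha_i|\Omega|$, $\theta_j(|\Omega|+2|\widetilde\Omega_j|)$ or $d_k^2$. Hence $m^2(\Omega)$ is at least the minimum of these quantities.

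\emph{Upper bound.} For each candidate I construct a bisecting curve of the corresponding length.

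\begin{enumerate}
\item \emph{Angle $\alpha_i$ at a vertex $V$.} Consider the arcs of circles centred at $V$ with radius $r$, restricted to $\Omega$. The area they cut off, $A(r)$, is continuous and increasing from $0$ to $|\Omega|$, so some $r$ gives $A(r)=|\Omega|/2$. The cut-off region is contained in the circular sector of angle $\alpha_i$ and radius $r$. Therefore $\tfrac12\alpha_i r^2\ge |\Omega|/2$. The fence $\gamma_r=\partial B_r(V)\cap\Omega$ has length at most $\alpha_i r$, since it is a subset of the sector arc. However, comparing $\alpha_i r$ with $\sqrt{\alpha_i|\Omega|}$ goes the wrong way, so this estimate alone does not close.
\item \emph{Refinement via the isoperimetric argument.} Use instead the relative isoperimetric inequality in a convex cone: in a sector of angle $\alpha_i$, any set $E$ of area $a$ satisfies $\mathrm{Per}(E,\text{cone})^2\ge 2\alpha_i a$, with equality for circular sectors. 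Alternatively, in the spirit of the paper, compute directly. If the arc stays inside $\Omega$, the bound is exact: length $=\alpha_i r$ with $\tfrac12\alpha_i r^2=|\Omega|/2$. If the arc exits $\Omega$, then a simpler competitor (a chord or another arc) is shorter; this is the delicate point.
\item \emph{Cleanest route.} Replace $\Omega$ by the cone $C$ at $V$, that is, the angle containing $\Omega$. Let $\ell(a)$ be the minimal relative perimeter in $C\cap\Omega$ for area $a$. Use the fact that $\min_E \mathrm{Per}(E,\Omega)$ with $|E|=|\Omega|/2$ is at most $\mathrm{Per}(E_r,\Omega)$ for $E_r=B_r(V)\cap\Omega$, where $\mathrm{Per}(E_r,\Omega)\le\mathcal H^1(\partial B_r\cap C)=\alpha_i r$. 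Then show directly that $r\le\sqrt{|\Omega|/\alpha_i}$. This is false in general, so I expect to need the argument of Lemma \ref{l:formulas}: the area cut off equals $\tfrac12\alpha_i r^2$ minus the part of the sector outside $\Omega$, and the candidate value is only attained when the arc lies in $\Omega$.
\end{enumerate}

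The main obstacle is therefore the upper bound for candidates whose arc (or segment) would leave $\Omega$. My plan is to prove the weaker statement that suffices: every candidate value that is \emph{not} realized by a bisecting arc inside $\Omega$ is at least some realized candidate. Combined with the lower bound, this gives equality.

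For the angle and $\theta_j$ cases I would argue as follows. Take the family of arcs centred at $O$. Its bisecting member has squared length equal to the candidate exactly when the arc lies inside the angle between $S'$ and $S''$ and meets $\Omega$ only through those two sides. In that case Lemma \ref{l:formulas}'s computation (run in reverse) shows the candidate equals $\mathcal H^1(\gamma)^2\ge m^2(\Omega)$.

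Otherwise the bisecting arc crosses another side $S'''$, or passes beyond a vertex of $S'$ or $S''$. I then compare with the candidate associated with the pair $(S',S''')$ or $(S''',S'')$. The planned mechanism is monotonicity of $\theta(|\Omega|+2|\widetilde\Omega|)$: moving the centre toward the side actually hit decreases the quantity, by comparing sector areas. Iterating over the finitely many pairs, this terminates at a realized candidate.

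Parallel pairs are handled the same way. Strips orthogonal to the sides bisect by continuity, giving the exact value $d_k$ if a perpendicular bisecting segment fits; otherwise $d_k$ exceeds a neighbouring candidate. The monotonicity comparison is the step I expect to require the most care.
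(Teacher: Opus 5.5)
There is a genuine gap: the inequality $m^2(\Omega)\le$ RHS is never established. Your lower bound is correct and matches the paper's. In item 1 you correctly see that $\mathcal H^1(\gamma_r)\le\alpha_i r$ goes the wrong way. You then switch to a plan with no argument behind it: show that every ``non-realized'' candidate dominates a ``realized'' one, via a monotonicity of $\theta(|\Omega|+2|\widetilde\Omega|)$ as the centre moves toward the side hit by the arc, iterated over pairs of sides. That comparison between candidates attached to different pairs of sides is far from obvious, since it involves the unrelated regions $\widetilde\Omega$ of different pairs. You would also need a strict decrease to rule out cycles, and a way to handle circles meeting $\Omega$ in several arcs. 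The cone isoperimetric inequality of item 2 is a lower bound on perimeter, so it cannot help here.

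The missing idea is that no comparison is needed. The arc $\partial B(O,\rho)\cap\Omega$ with $|\Omega\cap B(O,\rho)|=|\Omega|/2$ is an admissible bisecting fence whether or not it is ``realized''; you only have to measure it correctly. Write $\Omega=\{a(s)\le r\le b(s),\ 0<s<\theta\}$ in polar coordinates around $O$, so that $|\widetilde\Omega|=\frac12\int_0^\theta a^2\,ds$. The fence then has length $\rho|M|$, with $M=\{s:\ a(s)<\rho<b(s)\}$, not $\rho\theta$. Use $|M|\le\theta$ only once: $m^2(\Omega)\le\rho^2|M|^2\le\theta\rho^2|M|$. Then rewrite the half-area condition, with $A_+=\{\rho\ge b\}$ and $A_-=\{\rho\le a\}$, as
\[
\begin{aligned}
\rho^2|M|&=\frac12\int_M(a^2+b^2)\,ds+\frac12\int_{A_-}(b^2-a^2)\,ds-\frac12\int_{A_+}(b^2-a^2)\,ds\\
&\le\frac12\int_0^\theta(a^2+b^2)\,ds=|\Omega|+2|\widetilde\Omega| .
\end{aligned}
\]
At a vertex ($a\equiv 0$) this reads $r^2|M|=|\Omega|-\int_{A_+}b^2\,ds\le|\Omega|$. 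The angular portion where the circle has left $\Omega$ shortens the arc enough to compensate for the larger radius, which is exactly what item 1 overlooked. For parallel sides, no ``fitting'' condition is required either. The bisecting chord orthogonal to the two sides has length at most $d_k$ simply because $\Omega$ lies in the strip between them.
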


\begin{remark}
Before giving the proof, let us collect some comments on formula \eqref{f:minimum}.

\smallskip
\noindent
(i)  {\it Computing $|\widetilde\Omega_j|$.}      By construction, $\widetilde\Omega_j$ is a polygonal region.
Thus, if $P_0,P_1,\ldots,P_m$, with $P_{m+1}=P_0$, are its vertices,
listed in cyclic order, its area is given by the shoelace formula    
  $$ 
|\widetilde\Omega_j|=\frac12\Big|\sum_{\ell=0}^m P_\ell\times P_{\ell+1}\Big|.
$$
Inserting this identity into \eqref{f:minimum} gives a  closed formula for $m  ^2  (\Om)$ in terms of its vertices.

\smallskip
\noindent
(ii) {\it Relative isoperimetric profile.} 
Formula \eqref{f:minimum}
can be generalized to obtain a representation of  the shortest fence cutting $\Om$ in   an arbitrary    volume fraction  
$v\in(0,1)$,  i.e., 
$$ m_v (\Om) :=\inf\{\operatorname{Per}(E;\Omega):E\subset\Omega,\ |E|= v |\Omega| \}.$$
Letting $p_v:=\min\{v,1-v\}$, the same arguments used in the proof of Proposition \ref{p:chat} give 
\begin{equation}\label{f:volfrac} 
  m_v ^2  (\Om)    =
\min\left\{
2\alpha_i p_v |\Om|,\,
2\theta_j\bigl(p_v |\Om| +|\widetilde\Omega_j|\bigr),\,
d_k^2
\right\}\,, 
\end{equation}
Incidentally, this   shows  the concavity of the map $v \mapsto m_v  ^ 2  (\Om)   $ \cite[Theorem 1.1]{K03}. 

\smallskip
\noindent
(iii) {\it Beyond polygons.}    Formula \eqref{f:volfrac}
can be further extended to represent $m _ v (\Om)$ when  $\Om\subset \R ^ 2$ is  an arbitrary bounded convex  set. In fact,  the only role of the polygonal assumption in the proof 
is to make the set of admissible configurations finite. Repeating the same arguments, with pairs of sides replaced by pairs of
support  lines, gives
 $$
  m _ v ^ 2 (\Om)  
=
\inf\left\{
2\alpha {  ( L ', L '') }    p_v|\Omega|,\,
2\theta {  ( L ', L '')  } \bigl(p_v|\Omega|+|\widetilde\Omega { ( L ',L '')  }|\bigr),\,
d{  ( L', L '')   }   ^ 2
\right\},
$$ 
where  $(L', L'')$  range over pairs of support lines of 
$\Om$ such that, respectively: they meet  at a corner point of $\Om$ by forming an angle 
$\alpha ( L ', L '')  $; 
 they are non-parallel and meet at a point $O$ outside
$\Omega$, in which case $\theta(L',L'')$ denotes the angle
formed by $L'$ and $L''$ which contains $\Omega$
and
$
\widetilde\Omega(L',L'')
:=
\operatorname{co}(\Omega\cup\{O\})\setminus\Omega;
$
  they are parallel at distance 
 $d (L ', L '') $. 
\end{remark}

\begin{proof} The fact that $m ^ 2 (\Om)$ is bounded from below by the minimum in \eqref{f:minimum} is immediate. 
 Indeed,    the endpoints of an active fence may lie either on two consecutive sides, or on two non-consecutive non-parallel sides, or on two parallel sides: then by applying Lemma \ref{l:formulas} we see that the value of $m ^ 2 (\Om)$ agrees with one of the  quantities listed at the right hand side of \eqref{f:minimum}, so that it is larger than or equal to the minimum among them.

Let us now show that 
$m^2(\Omega)$ is bounded also from above by each of the quantities appearing at the r.h.s.\ of \eqref{f:minimum}. 
First, let us show the upper bound 
\begin{equation}\label{f:ubu1} 
m^2(\Omega) 
\le
\theta\bigl(|\Omega|+2|\widetilde\Omega|\bigr)
\end{equation}
 for every pair of non-consecutive, non-parallel sides.
 Let us work in    a system of polar coordinates with origin at the intersection between the supporting lines of such sides, 
 denoting by $s$ the angular variable. 
  Since $\Om$ is convex,     there exist  functions 
 $a = a ( s) $ and $b = b ( s)$  such that
$$
\begin{aligned}
\Omega
&=
\{(r,s):a(s)\le r\le b(s),\ 0<s<\theta\},\\
\widetilde\Omega
&=
\{(r,s):0\le r\le a(s),\ 0<s<\theta\}\,,
\end{aligned}
$$
 where the second identity follows from the definition of
$\widetilde\Omega$. 
Hence,
\begin{eqnarray}
& |\Omega|
 \displaystyle =
\int_0^\theta\int_{a(s)}^{b(s)} r\,dr\,ds
=
\frac12\int_0^\theta [b(s)^2-a(s)^2 ]\,ds,
& \label{f:omegaarea}
\\ \noalign{\medskip} 
& |\widetilde\Omega|
 \displaystyle =
\int_0^\theta\int_0^{a(s)} r\,dr\,ds
=
\frac12\int_0^\theta a(s)^2\,ds.
& \label{f:omegatildearea}
\end{eqnarray}
 Consequently,
$$
|\Omega|+2|\widetilde\Omega|
=
\frac12\int_0^\theta[ a(s)^2+b(s)^2 ] \,ds.
$$
We now fix  $\rho\ge0$ such that, denoting by $B(O, \rho)$ the ball of center $O$ and radius $\rho$, 
\begin{equation}\label{f:half}
| \Omega\cap B(O,\rho)| = \frac{|\Omega|}{2}\,.
\end{equation} 
Notice that such radius exists because the left hand side of the above equality is a continuous function of $\rho$, 
which starts from $0$ and eventually equals $|\Omega|$.  
Up to sets of measure zero, let us split the angular interval $(0,\theta)$ as the union of the three disjoint sets 
$$
\begin{aligned} 
& A_+=\{s:\rho\ge b(s)\},
\\ 
& M=\{s:a(s)<\rho<b(s)\},
\\ 
& A_-=\{s:\rho\le a(s)\}\,.
\end{aligned}
$$
By definition of $m (\Om)$, it is estimated from above by the length
of the circular arc
$\partial B(O,\rho)\cap\Omega$, which equals 
$\rho |M| $, where $|M|$ denotes  the Lebesgue measure of $M$ in the angular variable. 
Taking into account that $|M|\le \theta$, we have
$$
m^2 (\Omega)
\le
\rho^2 |M|^2
\le
\theta\,\rho^2 |M|
\,. $$
So, to obtain \eqref{f:ubu1}, it is enough to show that 
\begin{equation}\label{f:ubu1bis} 
 \rho^2 |M| \leq  |\Omega|+2|\widetilde\Omega| \,.
 \end{equation}
To that aim we observe that, 
by using the equality \eqref{f:omegaarea} 
and multiplying by $2$,  the half-area condition 
\eqref{f:half}  
becomes
$$
\int_{A_+} [b(s)^2-a(s)^2] \,ds
+
\int_M [\rho^2-a(s)^2] \,ds
=
\frac12\int_0^\theta [b(s)^2-a(s)^2] \,ds.
$$
Decomposing 
$(0,\theta)$ as the union $A_+\cup M\cup A_-$,
the previous identity can be rewritten as
$$
\int_M[ \rho^2-a(s)^2 ]\,ds
=
-\frac12\int_{A_+} [b(s)^2-a(s)^2 ]\,ds
+
\frac12\int_M [b(s)^2-a(s)^2 ]\,ds
+
\frac12\int_{A_-} [b(s)^2-a(s)^2] \,ds.
$$
Since 
$$
\int_M [\rho^2-a(s)^2] \,ds
=
\rho^2|M|-\int_M a(s)^2\,ds,
$$
we get
$$
\rho^2|M|
=
-\frac12\int_{A_+} [b(s) ^2-a(s) ^2 ]\,ds
+
\frac12\int_M [a(s)^2 + b ( s) ^ 2 ]\,ds
+
\frac12\int_{A_-} [b(s)^2-a(s)^2] \,ds.
$$
Dropping the non-positive term over $A_+$ and using $b^2-a^2\le b^2+a^2$ on $A_-$, we obtain
$$
\rho^2|M|
\le
\frac12\int_M [a(s)^2+b(s)^2] \,ds  \,  +
\frac12\int_{A_-}[ a(s)^2+b(s)^2 ]\,ds \leq \frac12\int_0^\theta [ a(s)^2+b(s)^2] \,ds 
=
|\Omega|+2|\widetilde\Omega|\,,
$$
yielding \eqref{f:ubu1bis} and, in turn, \eqref{f:ubu1}. 

The estimate
\begin{equation}\label{f:ubu2} 
m^2(\Omega) 
\le
\alpha |\Omega|
\end{equation}
for every 
angle $\alpha$ formed by two consecutive sides
is obtained by the same argument, now with $O$ equal to a vertex of $\Omega$. 
The corresponding sector has aperture $\alpha$, 
and the previous argument continues to work, with 
$a(s)=0$ for every $s\in(0,\alpha)$ and $|\widetilde\Omega|=0$.

Finally, it remains to show the estimate
\begin{equation}\label{f:ubu3} 
m^2(\Omega) 
\le
d ^ 2 \end{equation}
for any pair of parallel sides at distance $d$. 
 After a rigid motion, assume that the supporting lines of these sides are
$y=0$  and $y=d$,
and that $\Omega$ is contained in the strip $
\{0\le y\le d\}$.
By a continuity argument analogous to the one used above,  there exists $t$ such that 
$$
|\Omega\cap\{x<t\}|=\frac{|\Omega|}{2}.
$$ 
The relative boundary of $\Omega\cap\{x<t\}$ inside $\Omega$ is the chord
$\Omega\cap\{x=t\}$, and 
since $\Omega$ is contained in the strip $\{0\le y\le d\}$, this chord has length at most $d$. Therefore
$m(\Omega)\le d$. 
Combining the lower bound with the three upper bounds 
\eqref{f:ubu1}, \eqref{f:ubu2}, and \eqref{f:ubu3}, we obtain \eqref{f:minimum}.
\end{proof}

\bigskip
 \begin{corollary} [Optimality  of the regular triangle]\label{c:triangle}  
The unique maximizer for $\Phi$ over the class of triangles is  the regular one. 
\end{corollary}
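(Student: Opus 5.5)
For a triangle, every pair of sides is consecutive, so only the first family in Proposition \ref{p:chat} occurs. Formula \eqref{f:minimum} then reduces to
$$
m^2(\Omega)=|\Omega|\,\min\{\alpha_1,\alpha_2,\alpha_3\},
$$
where $\alpha_1,\alpha_2,\alpha_3$ are the inner angles. Hence
$$
\Phi(\Omega)^2=\min_i \alpha_i ,
$$
so $\Phi$ depends only on the smallest angle of the triangle.

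The plan is therefore to maximize the smallest angle under the constraint $\alpha_1+\alpha_2+\alpha_3=\pi$. Since the minimum of three numbers is at most their average, $\min_i\alpha_i\le\pi/3$. Equality holds if and only if $\alpha_1=\alpha_2=\alpha_3=\pi/3$, that is, if and only if the triangle is equilateral. For the equilateral triangle this gives $\Phi=\sqrt{\pi/3}\approx 1.0233$.

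The only point to double-check is that Proposition \ref{p:chat} applies to triangles as stated, with the sets of pairs of non-consecutive or parallel sides empty. The proof of the proposition supports this: the lower bound uses Lemma \ref{l:formulas}(i), since any active fence of a triangle has endpoints on consecutive sides. The upper bound $m^2\le\alpha_i|\Omega|$ comes from the circular arc centred at each vertex. Uniqueness then follows from the strict equality case above, up to homotheties and rigid motions. There is no real obstacle.
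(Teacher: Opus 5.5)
Your proof is correct and follows the paper's argument: Proposition \ref{p:chat} reduces $\Phi^2$ to the smallest inner angle of the triangle, and this angle is at most $\pi/3$, with equality only for the equilateral triangle. Your extra checks make explicit two points the paper leaves implicit: for triangles the non-consecutive and parallel families in \eqref{f:minimum} are empty, and equality in the bound forces all three angles to be equal, which gives uniqueness.
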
 
 
\proof By Proposition \ref{p:chat}, in any  triangle $T$ of unit area  
$m    ^ 2   ( T)$  is equal to the minimal inner angle  $\alpha _{min}$, so that   
$$m ^ 2  (T) = \alpha _{min}  \leq \frac{\pi}{3} = m ^ 2  (T ^*)\,.$$ 
\qed

\bigskip
When passing from triangles to quadrilaterals, handling the maximization of $\Phi$  becomes much more complex and will require 
the following optimality criterion:

\begin{proposition}[Clarke's condition for a local maximizer of a minimum]
\label{p:clarke}
Let $U\subset \mathbb R^k$ be open, let $x_0\in U$, and let
$f_1,\ldots,f_N:U\to\mathbb R$ be $C^1$ functions. Set
$$
\psi(x):=\min_{1\le i\le N} f_i(x),
\qquad
I(x_0):=\{i:\ f_i(x_0)=\psi(x_0)\}.
$$
If $\psi$ has a local maximum at $x_0$, then
\begin{equation}\label{f:0co} 
0\in
{\rm co} \{\nabla f_i(x_0):\ i\in I(x_0)\}.
\end{equation} 
Equivalently, there exist numbers $\lambda_i\ge 0$, for $i\in I(x_0)$, such that
$$
\sum_{i\in I(x_0)}\lambda_i=1,
\qquad
\sum_{i\in I(x_0)}\lambda_i\nabla f_i(x_0)=0.
$$
\end{proposition}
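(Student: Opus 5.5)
We prove the contrapositive. Assume $0\notin K:={\rm co}\{\nabla f_i(x_0): i\in I(x_0)\}$ and construct a direction $v$ along which $\psi$ strictly increases, so that $x_0$ cannot be a local maximum.

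First we find the direction. The set $K$ is a compact convex subset of $\mathbb R^k$, since it is the convex hull of finitely many points. Since $0\notin K$, let $v$ be the point of $K$ with minimal norm, so $v\neq 0$. The projection characterization gives $\langle w-v, v\rangle\ge 0$ for all $w\in K$. Hence $\langle w,v\rangle\ge |v|^2=:\delta>0$ for every $w\in K$, and in particular $\langle \nabla f_i(x_0),v\rangle\ge\delta$ for every $i\in I(x_0)$.

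Next we handle the inactive indices. For $i\notin I(x_0)$ we have $f_i(x_0)>\psi(x_0)$, so there is $\eta>0$ with $f_i(x_0)\ge \psi(x_0)+\eta$ for all such $i$. By continuity, for $t>0$ small we still have $f_i(x_0+tv)>\psi(x_0)+\eta/2$ for these indices.

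Then we handle the active indices. For $i\in I(x_0)$, since $f_i$ is $C^1$, a first-order Taylor expansion gives
$$f_i(x_0+tv)=\psi(x_0)+t\langle\nabla f_i(x_0),v\rangle+o(t)\ge \psi(x_0)+t\delta+o(t).$$
There are finitely many indices, so the $o(t)$ terms are uniform. Thus for all sufficiently small $t>0$ we get $f_i(x_0+tv)>\psi(x_0)$ for every $i$, that is, $\psi(x_0+tv)>\psi(x_0)$. Since $x_0+tv\to x_0$ and lies in $U$ for $t$ small, this contradicts local maximality.

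The equivalent formulation is a restatement. A point belongs to the convex hull of finitely many points exactly when it is a convex combination of them, so $0\in K$ means there are $\lambda_i\ge0$ with $\sum\lambda_i=1$ and $\sum\lambda_i\nabla f_i(x_0)=0$.

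The main point is the separation step producing $v$. It is routine through the minimal-norm projection onto $K$; everything else is continuity and first-order expansion.
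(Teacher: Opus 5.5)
Your proof is correct, and it takes a different route from the paper's.

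The paper does not argue directly. It applies two results from Clarke's nonsmooth calculus:
\begin{itemize}
\item since $-\psi$ is a finite pointwise maximum of the $C^1$ functions $-f_i$, its generalized gradient satisfies $\partial^C(-\psi)(x_0)\subseteq {\rm co}\{-\nabla f_i(x_0):\ i\in I(x_0)\}$ \cite[Proposition 2.3.12]{clarke};
\item Clarke's Fermat rule \cite[Proposition 2.3.2]{clarke} gives $0\in\partial^C(-\psi)(x_0)$ at a local extremum.
\end{itemize}
Multiplying the resulting inclusion by $-1$ concludes.

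You replace this machinery with a Gordan-type alternative. The minimal-norm point $v$ of the convex hull separates $0$ from it, giving $\langle\nabla f_i(x_0),v\rangle\ge |v|^2>0$ for every active index. A first-order expansion then shows $\psi(x_0+tv)>\psi(x_0)$ for all small $t>0$, and continuity keeps the inactive indices above $\psi(x_0)$.

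The paper's route is a two-line citation, but it rests on the generalized-gradient max-rule, for which strict differentiability (here guaranteed by $C^1$) is the relevant hypothesis. Your route is self-contained and elementary. It uses only differentiability at $x_0$ of the active $f_i$ and continuity at $x_0$ of the inactive ones.

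Your route also produces an explicit ascent direction. That is exactly the mechanism the paper itself uses when it excludes configurations by hand, building perturbations that raise all active candidate values at once, as in Lemma \ref{l:laws} and the first cases of Section \ref{sec:notrap}.
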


\begin{proof}
The function $-\psi$ is the finite pointwise maximum of the functions
$-f_i$. By Clarke's formula for the generalized gradient of a finite
pointwise maximum, see \cite[Proposition 2.3.12]{clarke}, and by Clarke's
necessary condition for a locally Lipschitz function to have a local
extremum, see \cite[Proposition 2.3.2]{clarke}, we get
$$
0\in \partial^C(-\psi)(x_0)
\subseteq
{\rm co}\{-\nabla f_i(x_0):\ i\in I(x_0)\}.
$$
Multiplying by $-1$ gives the desired inclusion \eqref{f:0co}. \end{proof}

\section{ Geometric setup and proof architecture of Theorem \ref{t:trapezium} }\label{sec:setup}

We first fix the labelling convention for quadrilaterals that will be used throughout the rest of the paper:

\begin{definition}[Labelling of $Q$]\label{d:labels1}
Given a convex quadrilateral $Q$,   we name cyclically its sides  in clockwise order   as 
$$S_1, \ \ S_2, \ \ S_3, \ \ S_4\,, $$
their lengths by $\ell _i:=\mathcal H ^ 1 ( S_i)$, and its vertices as
$$A = S _4 \cap S_1\, ,  \qquad B  = S _1 \cap S_2\, , \qquad 
  C  = S _2 \cap S_3\, , \qquad D = S_3 \cap S_ 4\,.$$   
 Moreover, we denote by $\gamma _{ij}$ an active fence with endpoints on  the sides
$S_i$ and $S_j$. 
 \end{definition}

The first steps towards the proof of Theorem \ref{t:trapezium} consist in establishing the existence of an optimal quadrilateral (Lemma \ref{l:existence}), 
and  
an incidence property for the endpoints of active fences (Lemma \ref{l:laws}).

\begin{lemma}\label{l:existence} 
Problem \eqref{f:pb} admits a solution.
\end{lemma}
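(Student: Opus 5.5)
Since $\Phi$ is invariant under homotheties and rigid motions, we restrict to quadrilaterals of unit area with barycenter at the origin and run the direct method on this normalized class. We show that $\Phi$ (equivalently $m$) is upper semicontinuous, indeed continuous, along Hausdorff-converging sequences of convex sets, and that a maximizing sequence stays away from degenerate limits. Any such degenerate limit must be a triangle with some vertices collapsing, or a flattened set, and we rule both out by a limit computation.

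\emph{Step 1: a priori bounds.} The square gives $\sup\Phi\ge \Phi(\text{square})=1$, and the triangle value $\sqrt{\pi/3}$ is approximated by quadrilaterals, so $\sup\Phi\ge f(3)>1$. For a unit-area convex $Q$ with diameter $D$ and minimal width $w$, a chord orthogonal to a diameter chosen to bisect the area has length at most $w$ (this is the argument of \eqref{f:ubu3}), so $m(Q)\le w\le C/D$ because $wD\le 2|Q|$ up to a constant. Hence along a maximizing sequence $(Q_n)$ the diameters are bounded, and after translation the $Q_n$ lie in a fixed ball.

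\emph{Step 2: compactness.} By Blaschke selection the vertices $(A_n,B_n,C_n,D_n)$ converge up to a subsequence, and $Q_n\to Q$ in the Hausdorff sense, where $Q$ is convex with area $1$. Area is continuous, so $Q$ is nondegenerate. It follows that $Q$ is either a convex quadrilateral or a triangle, the latter when two vertices merge or one vertex becomes aligned with its neighbours.

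\emph{Step 3: continuity of $m$.} We claim $m(Q_n)\to m(Q)$ for convex bodies under Hausdorff convergence with positive limit area. The inequality $\limsup m(Q_n)\le m(Q)$ follows by taking an active fence $\gamma$ of $Q$ (Lemma \ref{l:structure}), adjusting it slightly to bisect $Q_n$, and noting the length changes by $o(1)$; one can also use the varying-$\rho$ and varying-$t$ argument from the proof of Proposition \ref{p:chat}. The reverse inequality, lower semicontinuity, follows from BV compactness: take active sets $E_n\subset Q_n$ with $|E_n|=|Q_n|/2$ and bounded relative perimeter, map them by bi-Lipschitz maps $Q_n\to Q$ with constants tending to $1$ (available for convex bodies), pass to the $L^1$ limit, and use lower semicontinuity of relative perimeter. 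Hence $\Phi(Q)=\sup\Phi$.

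\emph{Step 4: excluding a triangle limit.} This is the main obstacle. If $Q$ is a triangle, then $\Phi(Q)\le\sqrt{\pi/3}$ by Corollary \ref{c:triangle}. So it suffices to exhibit a quadrilateral with $\Phi>\sqrt{\pi/3}\approx1.0233$. By the statement's numerics $Q^*$ would do, but to avoid circularity we take an explicit isosceles trapezium and evaluate \eqref{f:minimum} directly. The candidate has base angle near $1.1$, height $\sqrt{1.1}$ and area $1$: the two acute angles give $m^2=\alpha\approx1.1$, the distance between the parallel sides gives $d^2=1.1$, and the obtuse angles and the legs' $\theta$-term are checked to be larger. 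Then $m^2\approx1.1>\pi/3$, so the supremum strictly exceeds every triangle value and the limit $Q$ is a genuine quadrilateral, i.e.\ a maximizer.
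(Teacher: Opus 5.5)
Your proposal is correct and follows essentially the paper's route: the direct method, a bisecting-chord estimate to prevent collapse, and Corollary~\ref{c:triangle} combined with one explicit quadrilateral with $\Phi>\sqrt{\pi/3}$ to exclude a triangle limit. The differences are minor: you fix the area and bound the diameter, while the paper fixes the longest side as $[D,C]$ and bounds the area from below through $\Phi(Q_n)\le 2\sqrt{|Q_n|}$; your trapezium with base angle $1.1$ plays the role of $Q^*$; and you sketch the continuity of $m$, which the paper takes for granted.

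There are two small slips to fix. First, in Step~1 a chord orthogonal to a diameter is bounded by the width $W$ in the direction orthogonal to that diameter, not by the minimal width $w$. The two triangles built on the diameter still give $DW\le 2|Q|$, so $m(Q)\le 2/D$ survives; alternatively, bisect with a chord orthogonal to a minimal-width strip, as in \eqref{f:ubu3}. Second, the triangle value you need is $\sqrt{\pi/3}\approx 1.0233$, not the tabulated $f(3)\approx 1.447$, which uses a different normalization.
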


\proof
Let $\{Q_n\}\subset \mathcal Q$ be a maximizing sequence. Up to
rescalings and rigid motions, we may assume without loss of generality
that, for every $n$, a longest side of $Q_n$ is the segment joining
$D=(0,0)$ to $C=(1,0)$ and that $Q_n$ lies in the upper half-plane.
The maximality of $|CD|$ implies that the sets $Q_n$ are contained in
a fixed compact set independent of $n$.

Thus, up to a subsequence, $\{Q_n\}$ converges in the Hausdorff distance
to a compact convex set $Q$, which may degenerate into a segment or a triangle, or be a genuine
quadrilateral. Let us exclude the first two possibilities.

To see that $Q$ has positive area, write the remaining vertices of
$Q_n$ as
$$
A_n=(a_n^x,a_n^y), \qquad B_n=(b_n^x,b_n^y).
$$
We have
$$
|Q_n|\geq
\max\left\{\frac{a_n^y}{2},\frac{b_n^y}{2}\right\}.
$$
On the other hand,  by continuity of the map
$t\mapsto |Q_n\cap\{x<t\}|$,
there exists a vertical bisecting chord of $Q_n$, whose length is at
most $\max\{a_n^y,b_n^y\}$. Hence
$$
m(Q_n)\leq \max\{a_n^y,b_n^y\}\,.
$$
Therefore
\begin{equation}\label{f:estimatearea} 
\Phi(Q_n)\leq 2\sqrt{|Q_n|}.
\end{equation} 
Since the maximizing sequence may be chosen so that
$\Phi(Q_n)\geq \Phi(Q^*)>0$,
the areas $|Q_n|$ remain bounded away from zero. By continuity of the area
under Hausdorff convergence of convex bodies, it follows that
$|Q|>0$. Thus $Q$ cannot degenerate into a segment.

We next exclude the possibility that $Q$ is a triangle. By continuity
of the shortest-fence functional and of the area under Hausdorff
convergence of convex bodies,
\begin{equation}\label{f:convphi}
\Phi(Q)=\lim_{n\to\infty}\Phi(Q_n)\,.
\end{equation}
Hence
$$
\Phi(Q)\geq \Phi(Q^*)=\sqrt{\alpha^*}>
\sqrt{\frac{\pi}{3}}.
$$
On the other hand, if $Q$ were a triangle, Corollary \ref{c:triangle}
would give
$$
\Phi(Q)\leq \sqrt{\frac{\pi}{3}},
$$
a contradiction.
Therefore $Q$ is a genuine quadrilateral. Finally,
from \eqref{f:convphi} we infer that  $Q$ realizes the maximum in \eqref{f:pb}.
\qed

\begin{lemma}[Sides hosting active fences' endpoints] 
\label{l:laws}
Let $Q$ be a  solution to problem \eqref{f:pb}. Then every side of $Q$ contains some endpoint  of an active fence. 
\EEE 

 \end{lemma}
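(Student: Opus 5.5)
Assume by contradiction that $Q$ is optimal and that some side, say $S_1$, contains no endpoint of any active fence. The plan is to perturb $Q$ inside $\mathcal Q$ so that the area stays fixed and every quantity in the minimum \eqref{f:minimum} realizing $m^2(Q)$ strictly increases. By continuity, the non-active quantities stay strictly larger, so $\Phi$ increases, which contradicts optimality. Equivalently, we violate Clarke's condition of Proposition \ref{p:clarke}. For this we use the vertices as coordinates and take as the functions $f_i$ the finitely many smooth expressions appearing in \eqref{f:minimum}, each divided by the area. We must exhibit a direction $v$ with $\nabla f_i\cdot v>0$ for all active indices $i$.

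The natural perturbation translates the supporting line of $S_1$ outward parallel to itself, enlarging $Q$ by a thin trapezoidal strip glued along $S_1$. The two other sides meeting $S_1$ are extended, so $Q$ remains a convex quadrilateral for small displacements, and the inner angles are unchanged. We then rescale to restore the area. The three types of terms behave as follows, since by assumption every active fence has endpoints on sides among $S_2,S_3,S_4$.

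\textbf{Consecutive-side terms.} An active term $\alpha_i|Q|$ has vertex $C$ or $D$; these are the angles between $S_2,S_3$ and $S_3,S_4$, and they are unchanged. The normalized quantity $\alpha_i$ is therefore constant along the translation. It is not strictly increasing, so the translation alone is insufficient.

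\textbf{Parallel-side terms.} If $S_2\parallel S_4$, the quantity $d^2/|Q|$ decreases when $|Q|$ grows, which is bad.

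The remedy is to combine the move with a change of angles, which is possible because $S_1$ carries no fence constraint. Rotating $S_1$, or equivalently moving $A$ and $B$, changes the angles at $A$ and $B$ only, and neither is active. So the degrees of freedom to use are the positions of $A$ and $B$, with $C$ and $D$ held fixed. Then:
\begin{itemize}
\item active angle terms at $C$ and $D$ become $\alpha|Q|$, which increases as soon as $|Q|$ increases;
\item parallel terms $\gamma_{24}$ keep $d$ fixed if $A$ and $B$ slide along the lines of $S_4$ and $S_2$;
\item terms with non-consecutive sides $S_2,S_4$ that are not parallel satisfy $\theta(|Q|+2|\widetilde Q|)$, where $\theta$ and $\widetilde Q$ (the triangle beyond $S_3$, or the region near $O$ determined by $C$ and $D$) are fixed when $A$ and $B$ slide along those lines.
\end{itemize}
So we slide $A$ and $B$ outward along the supporting lines of $S_4$ and $S_2$, keeping $C$ and $D$ fixed. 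This is exactly the translation of $S_1$. Every active $m^2$-candidate is then of the form (fixed constant) plus (positive coefficient)$\cdot|Q|$, or is the constant $d^2$, while $\Phi^2=m^2/|Q|$.

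This is the crux. Let $c\ge0$ be the fixed constant part and $\kappa\,|Q|$ the part proportional to the area. The ratio $(c+\kappa|Q|)/|Q|$ is \emph{nonincreasing} in $|Q|$, so pure growth fails. The direction must therefore be reversed: shrink $Q$ by sliding $S_1$ inward, toward $S_3$. Then:
\begin{itemize}
\item for an active term $\alpha|Q|$ the ratio $\alpha$ is unchanged;
\item for a term $\theta(|Q|+2|\widetilde Q|)$ the ratio $\theta(1+2|\widetilde Q|/|Q|)$ strictly increases;
\item for a parallel term $d^2/|Q|$ the ratio strictly increases.
\end{itemize}
The remaining obstacle is the angle terms at $C$ and $D$, whose ratio stays flat. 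To handle them, we combine the inward slide with a small rotation of $S_1$. It remains to show that the configuration in which only angle terms are active cannot be optimal. If only angles are active, $m^2/|Q|=\min(\alpha_C,\alpha_D)$, possibly together with flat $\gamma_{24}$ terms. A perturbation moving $C$ or $D$ slightly can raise the smaller of these angles while keeping $\alpha_C,\alpha_D$ below $\pi$ and all other terms inactive. The hardest part is checking this case analysis: the case where only angle terms are active, and mixed cases where a $\gamma_{24}$ term forces the inward move while an angle term is flat. There I would use a two-parameter move, consisting of an inward slide plus a rotation of $S_3$ around a point, and verify the sign conditions of Proposition \ref{p:clarke} directly.
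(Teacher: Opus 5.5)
There is a genuine gap, and it lies in the case you explicitly defer: $\gamma_{24}$ active together with $\gamma_{23}$ and/or $\gamma_{34}$.

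Neither of your proposed remedies works there.
\begin{itemize}
\item A rotation of $S_1$ changes only $\widehat A$ and $\widehat B$, so it does nothing to the flat terms at $C$ and $D$.
\item An inward slide combined with a rotation of $S_3$ keeps the supporting lines of $S_2$ and $S_4$ fixed. But $\widehat C+\widehat D$ depends only on these two lines: it equals $\pi\mp\theta_2$ according to the side on which $P_2$ lies, or $\pi$ if $S_2\parallel S_4$. So when $\gamma_{23}$, $\gamma_{34}$ and $\gamma_{24}$ are all active, no such move can increase both active angles.
\end{itemize}
This configuration is not vacuous: it is the configuration of $Q^*$ with the segment $\gamma_{13}$ removed. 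To open both angles you must tilt $S_2$ and $S_4$, for instance by rotating them about $C$ and $D$. That tilt changes the $\gamma_{24}$ candidate at first order with no control on the sign, and your proposal does not say how to absorb this.

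The missing idea is to use your inward slide of $S_1$ as a separate preliminary step, as the paper does. (The paper moves $S_1$ and $S_3$ inward so that $\gamma_{24}$ still bisects, then rescales; your slide of $S_1$ alone works equally well at the level of the candidate values.) This move leaves $\widehat C$ and $\widehat D$ unchanged and strictly increases the normalized $\gamma_{24}$ candidate.
\begin{itemize}
\item If no angle term is active, this already contradicts optimality.
\item Otherwise it produces a quadrilateral with the same value of $\Phi$. That quadrilateral is still optimal, has no fence endpoint on $S_1$, and has no active $\gamma_{24}$. Rotating $S_2$ and $S_4$ about $C$ and $D$ then finishes the argument.
\end{itemize}
In Clarke form, the same idea reads as follows. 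The direction $v=v_{\mathrm{rot}}+M\,v_{\mathrm{slide}}$, with $M$ large, has strictly positive scalar product with every active gradient: the rotation part handles the angles, and the slide part dominates the first-order change of the $\gamma_{24}$ term.

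There is also a smaller error. Your claim that $\widetilde Q$ is fixed when $A$ and $B$ slide is false when $P_2$ lies beyond $S_1$, since then $\widetilde Q=\triangle(P_2,A,B)$. Your conclusion survives, though. In that case $|Q|+|\widetilde Q|=|\triangle(P_2,C,D)|$ is fixed, so $(|Q|+2|\widetilde Q|)/|Q|$ still increases as $|Q|$ decreases.
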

\proof
Assume, without loss of generality, that $S_1$ does not contain any
endpoint of an active fence. We construct a perturbation of $Q$ which
increases $\Phi$, contradicting optimality.

 We shall repeatedly use Proposition \ref{p:chat}. 
Since the candidate values appearing in formula \eqref{f:minimum}
depend continuously on the vertices of $Q$, every candidate which is not
active at $Q$ gives rise, under a sufficiently small perturbation of $Q$,
to a corresponding candidate whose value remains strictly larger than
$m^2(Q)$.

We distinguish two cases.

 \begin{figure}[ht]
 \center
 \includegraphics[width=0.99\textwidth]{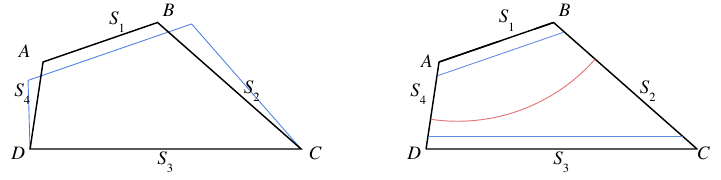}
 \caption{Perturbing $Q$ when no endpoints of active fences lie on $S_1$, and 
either there is no active fence  $\gamma _{24}$ (left) or there is such an arc (right)}      
 \label{fig1}   
 \end{figure}

{\it Case 1}: Assume  that $Q$ does not admit an active fence  of type  $\gamma _{24}$. 
Since no active fence has an endpoint on $S_1$, all the
active fences are then centred at $C$ or $D$.
We perturb $Q$ by rotating $S_2$ and $S_4$ around $C$ and $D$,
respectively, in such a way that the interior angles at $C$ and $D$
increase, and by simultaneously moving $S_1$ in the inward normal direction
so as to preserve the area (see Figure \ref{fig1}). 
Since all active fences are centred at $C$ or $D$, and the perturbation
increases the interior angles at $C$ and $D$ while preserving the area,
all candidate values which are active at $Q$ increase.
On the other hand, 
for sufficiently
small $\varepsilon>0$, the remaining ones stay strictly above $m^2(Q)$
by continuity. Hence 
$\Phi(Q_\varepsilon)>\Phi(Q)$,
a contradiction.

\smallskip

{\it Case 2}:   Assume now that $Q$ admits an active fence of type   $\gamma _{24}$. By scale invariance, we may assume without loss of generality that $|Q|=1$.
We perform simultaneous inward parallel displacements of $S_1$ and
$S_3$, chosen so that $\gamma_{24}$ still bisects the perturbed
quadrilateral $\widetilde Q_\varepsilon$ (see Figure \ref{fig1}, right),
and then rescale by setting
$Q_\varepsilon
:=|\widetilde Q_\varepsilon|^{-1/2}\widetilde Q_\varepsilon$.
Thus $|Q_\varepsilon|=1$, whereas the fence obtained by rescaling
$\gamma_{24}$ has length strictly larger than $m(Q)$, because the scaling factor is larger than $1$.

Then, we distinguish two possible subcases.

\smallskip

\smallskip
 {\it Case 2a:} $\gamma_{24}$ is the unique active fence of $Q$. Then all the other
candidate values in the representation formula \eqref{f:minimum} applied to $Q$ are strictly larger than
$m^2(Q)$. By continuity, 
for
$\varepsilon$ small,  
the corresponding candidate values for
$Q_\varepsilon$ are still strictly larger than $m^2(Q)$. On the other hand, we have observed above that the length of the bisecting
fence obtained by rescaling $\gamma_{24}$ is strictly larger than
$m(Q)$. 
Thus, all the candidate values in \eqref{f:minimum} for $Q_\varepsilon$
are strictly larger than $m^2(Q)$. Hence
$m(Q_\varepsilon)>m(Q)$,
against the optimality of $Q$.

 \smallskip
  {\it Case 2b:} $Q$ has some other active fence. Then such a fence   is necessarily of type
$\gamma_{23}$ or $\gamma_{34}$ (or both), 
since by assumption  
  no active
fence has endpoints on $S_1$. To fix the ideas, assume $Q$ has an active fence
of type $\gamma_{23}$.
 
We examine each of the candidates in formula \eqref{f:minimum} for $Q_\e$. 
First, since 
the perturbation from $Q$ to $Q_\e$ leaves unchanged
the interior angle at $C$, the candidate associated
with the consecutive sides  of $Q _\e$ obtained as perturbations of $S_2$ and $S_3$ has the
same value as the corresponding candidate in the representation formula
for $Q$, namely
$m^2(Q)$, since $\gamma_{23}$ is active for $Q$.
(The same argument applies to 
any other active fence of $Q$,  which can only
be of type $\gamma_{34}$).
  On the other hand, as observed above, the bisecting fence of
$Q_\varepsilon$ obtained by rescaling $\gamma_{24}$ has length strictly
larger than $m(Q)$. Finally, by continuity, for $\e$ small, 
all the candidates in  the representation
formula \eqref{f:minimum}  for $Q_\e$ corresponding  to fences which  not active at $Q$, remain strictly larger than $m ^ 2 (Q)$. 

It follows from Proposition \ref{p:chat}  applied to $Q_\varepsilon$
that
$m^2(Q_\varepsilon)=m^2(Q)$,
and that no active fence of $Q_\varepsilon$ is of type $\gamma_{24}$.
Therefore $Q_\varepsilon$ satisfies the assumptions of Case~1.
Hence, by applying
the perturbation constructed there to $Q_\e$, we obtain a quadrilateral
$Q_{\e,\delta}$ such that
$\Phi(Q_{\e,\delta})>\Phi(Q_\e)=\Phi(Q)$,
contradicting the optimality of $Q$.
  \qed

 \bigskip

  Now, relying on Lemmas \ref{l:existence} and \ref{l:laws}, we  proceed with the identification of the unique optimal quadrilateral with the isosceles trapezium $Q^*$. 
 As mentioned in the Introduction, this is carried over in two steps. The first and more delicate one consists in proving the statement below. 
 We specify that,  here and throughout the paper, a trapezium is meant as a quadrilateral which has {\it at least} a pair of parallel sides.  

\begin{theorem}\label{t:parallel}  Any solution to problem \eqref{f:pb} is a trapezium. 
  \end{theorem}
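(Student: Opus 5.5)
We argue by contradiction: let $Q$ be a solution of \eqref{f:pb} with no pair of parallel sides, normalized so that $|Q|=1$. By Proposition \ref{p:chat}, the only candidates in \eqref{f:minimum} are then the four vertex candidates $\alpha_A,\alpha_B,\alpha_C,\alpha_D$ (fences $\gamma_{41},\gamma_{12},\gamma_{23},\gamma_{34}$) and the two opposite-side candidates $\theta_{13}(1+2|\widetilde Q_{13}|)$ and $\theta_{24}(1+2|\widetilde Q_{24}|)$ (fences $\gamma_{13},\gamma_{24}$). All six are smooth functions of the eight vertex coordinates near $Q$. Since $\Phi^2$ is a minimum of finitely many $C^1$ functions, Proposition \ref{p:clarke} applies: there are convex weights $\lambda_i$ on the active candidates with $\sum\lambda_i\nabla f_i=0$. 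The plan has three steps: list the admissible active sets, exclude each one through the Clarke condition or an explicit perturbation, and treat near-degenerate residual cases numerically.

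\emph{Step 1: combinatorics of active fences.} By Lemma \ref{l:laws}, every side carries an endpoint of an active fence. We list, up to the dihedral symmetries of the labelling, the subsets of the six fences that cover all four sides. The main types are: two opposite vertices ($\{\gamma_{12},\gamma_{34}\}$); three or four vertices; $\gamma_{13}$ and/or $\gamma_{24}$ together with vertex fences; and $\{\gamma_{13},\gamma_{24}\}$ alone. Some configurations can be discarded immediately by angle counting. For example, all four vertex fences active forces $\alpha_A=\dots=\alpha_D=\pi/2$, so $Q$ is a rectangle, which is a trapezium. Similarly, three equal angles $\alpha$ with $\alpha\le \pi/2$ give a fourth angle $2\pi-3\alpha\ge\pi/2$, so the pattern is quite rigid.

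\emph{Step 2: perturbative exclusions.} For each configuration we look for a deformation direction $v$ that preserves area and satisfies $\nabla f_i\cdot v>0$ for every active $i$. The existence of such a $v$ contradicts the Clarke condition. The building blocks are these:
\begin{itemize}
\item rotating a side about a vertex changes the two adjacent angles in opposite directions;
\item translating a side parallel to itself leaves all angles and $\theta$'s fixed, changes the areas, and after rescaling to unit area changes each $\theta_j(1+2|\widetilde Q_j|)$ in a computable way;
\item moving a single vertex along the bisecting direction, as in Case 2 of Lemma \ref{l:laws}, affects only nearby candidates.
\end{itemize}
Configurations made only of vertex fences should fall easily: the gradients of the angles with respect to the vertices, together with area-preserving translations of the free sides, give a Gordan-type alternative that has no convex-zero solution unless some sides are parallel. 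For configurations containing $\gamma_{13}$ (or $\gamma_{24}$), we compute $\nabla\big(\theta(1+2|\widetilde Q|)\big)$ explicitly via the shoelace formula for $|\widetilde Q|$ (the triangle cut off by $O$) and check the sign conditions. Non-parallelism enters because it fixes the sign of certain terms, for instance $|\widetilde Q|>0$ and the direction in which $O$ moves.

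\emph{Step 3: the hard cases.} The main obstacle will be configurations mixing opposite-side fences with vertex fences. The natural example is $\gamma_{13}$ with $\gamma_{12},\gamma_{34}$ (or also $\gamma_{24}$), which near $Q^*$ is exactly the active pattern of the optimal trapezium tilted slightly. There the Clarke system degenerates as $\theta_{13}\to0$, since the supporting lines become nearly parallel and $O$ goes to infinity, so a purely sign-based argument will not work. My first attempt would be to parametrize the candidate quadrilaterals by the equalities among active candidates, reducing to a low-dimensional family (about two parameters after fixing area and rigid motions). I would then expand the Clarke system around the trapezium limit $\theta_{13}=0$ and show analytically that in an explicit neighbourhood $\mathcal U$ of $Q^*$ the only solution of the Clarke system is $\theta_{13}=0$. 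Outside $\mathcal U$, I would use interval arithmetic over a compact parameter box, with compactness justified as in Lemma \ref{l:existence}. The box would be covered by cells on which either some non-active candidate certifiably drops below $\Phi^2(Q^*)=\alpha^*$, so the quadrilateral is not a maximizer, or the determinant or sign obstruction to the Clarke condition is certified. This combination of local analysis near $Q^*$ and certified numerics away from it is where I expect most of the work to lie.
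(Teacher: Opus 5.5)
Your overall architecture matches the paper's: contradiction, Lemma~\ref{l:laws}, Clarke's condition applied case by case, then analysis near $Q^*$ plus interval arithmetic away from it. The proposal has two concrete defects, however.

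First, the configuration you single out as ``the optimal trapezium tilted slightly'' is not that pattern. In $Q^*$ the active vertex arcs sit at the two acute base angles, which are \emph{adjacent} vertices on the long base (fences $\gamma_{23},\gamma_{34}$), and both opposite-side fences are active. Your pattern $\{\gamma_{13},\gamma_{12},\gamma_{34}\}$ (with or without $\gamma_{24}$) uses the \emph{opposite} vertices $B,D$. Near $Q^*$ one of these has angle close to $\pi-\alpha^*$, so it cannot be active. Both versions of your pattern are in fact easy cases:
\begin{itemize}
\item Without $\gamma_{24}$: translate $S_2,S_4$ inward so that $\gamma_{13}$ still bisects, then rescale. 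This makes $\gamma_{13}$ inactive while leaving all angles unchanged, which reduces to the vertex-only case.
\item With $\gamma_{24}$: equality of the active values forces $\widehat B=\widehat D$ and $|BC|=|DC|$, hence symmetry about $AC$. This gives the explicit bound $m^2(Q)\le(\pi-2\widehat B)/(2\cos\widehat B)\le 1.038<\alpha^*$.
\end{itemize}
The genuinely hard cases are those with both opposite arcs active and $\mathcal V=\{C\}$ or $\{C,D\}$, in the labelling of Definition~\ref{d:labels2}. To isolate them you also need the relation $\widehat A=\widehat C+\theta_1+\theta_2$, which shows that $A$ is never active.

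Second, you are missing the mechanism that makes Clarke's condition usable when opposite-side arcs are active, and ``checking sign conditions'' on shoelace gradients does not replace it. The paper tests the condition on one specific perturbation: rotating a single side about its midpoint. This leaves $|Q|$ and the other opposite-side candidate unchanged to first order, so each test isolates one multiplier. For the rotation of $S_2$ one computes $\frac{d}{d\varepsilon}|T_{2,\varepsilon}|=-\frac12|P_2B||P_2C|$. This yields power-of-a-point relations such as $|P_2A||P_2D|=\theta_2^{-1}(1+2|T_2|)$, or the corresponding inequalities when an adjacent vertex arc is active. These relations drive every case with two opposite arcs:
\begin{itemize}
\item For $\mathcal V=\emptyset$ they give cyclicity. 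A separate stationary-point analysis of $f(\beta_1,\beta_3)=f(\beta_2,\beta_4)$ then ends with $m^2(Q)=1$.
\item For $\mathcal V=\{D\}$ they place $D$ inside the circle through $A,B,C$, hence $\widehat D>\widehat C$.
\item For $\mathcal V=\{C\}$ and $\{C,D\}$ they give the closed identities \eqref{f:k1} and \eqref{f:kk1bis2}, which are then shown to fail on the explicit box $\mathcal U$.
\end{itemize}
In the $\{C\}$ case nothing degenerates as $\theta_1\to0$: $Q^*$ violates \eqref{f:k1} by a margin ($\approx 2.68$ versus $1$), so a monotonicity estimate suffices. Only the $\{C,D\}$ case is degenerate, and there a sign argument works. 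Your Step~3 (``expand the Clarke system around $\theta_{13}=0$'') states the goal but gives no route to these identities. As written, the hardest part of the theorem is therefore not proved.
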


The proof   of Theorem \ref{t:parallel} (which will be detailed in Section \ref{sec:notrap}) is  by contradiction, and it is  of hybrid type:  assuming  that $Q$ is not a trapezium, we first determine  which are   all  the possible configurations of its active fences,  and then we shall rule them out one by one.  
In order to list them,  let us give some preparatory definitions and observations.

\smallskip

\begin{definition}[Labelling of $Q$ assuming it is not a trapezium]\label{d:labels2}  Let $Q$ be  a solution to problem \eqref{f:pb}, labelled as in Definition \ref{d:labels1}, and assume it is not a trapezium. 
Then we denote  by $P _ 1$  the intersection between the support lines of $S_1$ and $S_3$
and by $P _ 2$  the intersection between the support lines of $S_2$ and $S_4$.
Moreover, we assume that $P _ 1 $ is closer to $S_4$ than to $S_2$, and that $P_2$ is closer to $S_1$ than to $S_3$. 
Finally, we denote by $T_1$ and $T_2$ the triangles
$$T_1:= \triangle (P_1, A, D) \,, \qquad T _ 2 := \triangle (P_2, A, B )\,,$$ 
and by $\theta _1$ and $\theta _2$ the inner angles of $T_1$ and $T_2$ at $P_1$ and $P_2 $ respectively, see Figure \ref{fig2}. 
\end{definition}

\vskip - .5cm 
 \begin{figure}[ht]
 \center
 \includegraphics[width=0.54\textwidth]{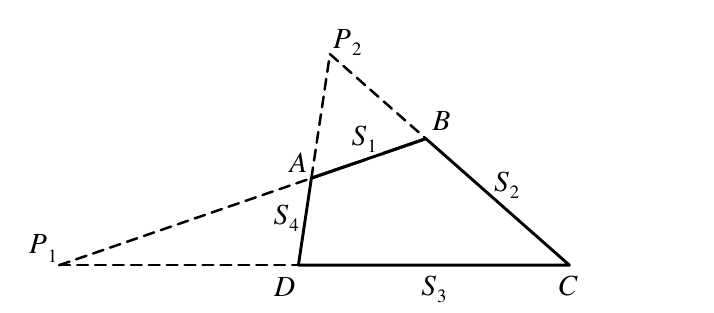}
 \caption{Labelling of $Q$ according to Definition \ref{d:labels2}.}      
 \label{fig2}   
 \end{figure}

\bigskip 

 \begin{remark}[Comparison between $\widehat A$ and $\widehat C$]\label{r:AC} 
If  $Q$  is labelled as 
in Definition \ref{d:labels2},  
  the inner angle at $A$ is strictly larger than the inner angle at $C$.   
Indeed, considering the sum of the interior angles of the triangles  $T_1$ and $T_2$, gives 
$$ ( \pi -  \widehat A) +  ( \pi -  \widehat B) +  \theta_2  = \pi  \qquad \text{ and } \qquad 
 ( \pi -  \widehat A) +  ( \pi -  \widehat D) +  \theta _1  = \pi $$ 
so that
$$ 2  \widehat A+  \widehat  B+  \widehat D=  2 \pi +  \theta _ 1 + \theta _2 \,. $$ 
Inserting into the above equality the identity  $ \widehat{B}  + \widehat{D}  = 2 \pi -  \widehat{C} - \widehat A$, coming from  
the sum of the interior angles of $ Q$, we obtain  $\widehat A =    \widehat{C} +  \theta _ 1 + \theta _2   >  \widehat{C}$. 
\end{remark}

\bigskip
 If $Q$ is labelled as 
in Definition \ref{d:labels2},  
 we call {\it active points of  $Q$}  the family $Act (Q)$ of the centres of an arc of circle which is active fence of $Q$, and we set 
 \begin{equation}\label{f:VP} 
 \mathcal V = Act (Q)  \cap \big \{ A, B, C, D \} \,, \qquad \mathcal P =Act (Q)  \cap  \big \{ P_1, P _2 \} \,.
 \end{equation} 
 Thus, a point in $\mathcal V$ is the center of an active fence with endpoints on two consecutive sides of $Q$
(so of the form $\gamma _{ij}$ with $|i - j | = 1$) 
while a point in $\mathcal P$ is the center of an active fence with endpoints on two opposite sides of $Q$
 (so of the form $\gamma _{ij}$ with $|i - j | = 2$).

\medskip
\begin{remark}[On the family $\mathcal V$ in an optimal quadrilateral which is not a trapezium] 
\label{r:V1}
Let $Q$ be a  solution to problem \eqref{f:pb} which is not a trapezium, and 
assume that the family $\mathcal V$ in \eqref{f:VP} is not empty.

Then by Proposition \ref{p:chat}  every vertex of $Q$ which belongs to $\mathcal V$ 
has inner angle of minimal amplitude $\alpha_{\min}$ among all inner angles of $Q$.

  In particular, from  the assumption that $Q$ has no parallel sides,  and from 
Remark \ref{r:AC}, we infer respectively that

  \begin{itemize}
 \item[(i)] The family $\mathcal V$ has at most $3$ elements. 
 
 \smallskip 
 \item[(ii)] If $Q$ is labelled as in Definition \ref{d:labels2}, then $\mathcal V$ does not contain  the vertex $A$. 
 \end{itemize} 
  \end{remark} 

  \smallskip 
  Now by using  Lemma \ref{l:laws},
 together with Remark \ref{r:V1}, 
we obtain the following table, up 
to cyclic relabelling of the vertices and sides of $Q$.

\medskip 
$\bullet$ {\it    Table of possible configurations of $Act (Q)$, if $Q$ is optimal and is not a trapezium:  }  
  
\smallskip
 
\begin{itemize}
\item[(${\mathcal P}0{\mathcal V}2$)]
 {\it $ {card}  (\mathcal P )  = 0$ and  $ {card}  (\mathcal V ) = 2$}. 
We have necessarily $\mathcal V = \big \{B, D \big \}$.

\medskip
\item[(${\mathcal P}0{\mathcal V}3$)]  {\it $ {card}  (\mathcal P )  = 0$ and  $ {card}  (\mathcal V ) = 3$}. 
  We have necessarily 
$
 \mathcal V  =  \{ B,  C  , D  \} 
$.  
\medskip

\item[(${\mathcal P}1{\mathcal V}2$)]  {\it $ {card}  (\mathcal P )  = 1$ and  $ {card}  (\mathcal V ) = 2$}. 
 We  have 
$\mathcal P = \{P _ 2 \}$, and 
$ \mathcal V   =  \{B, C  \}$,  or $  \{B, D\}$. 
 
\medskip
\item[(${\mathcal P}1{\mathcal V}3$)]  {\it $ {card}  (\mathcal P )  = 1$ and  $ {card}  (\mathcal V ) =  3$}. 
 We  have 
$ \mathcal V   =   \{B, C, D  \}  \,.$ 
 
\medskip

\item[(${\mathcal P}2{\mathcal V}0$)] {\it $ {card}  (\mathcal P )  = 2$ and  $ {card}  (\mathcal V ) = 0$}. 

\medskip 

\item[(${\mathcal P}2{\mathcal V}1$)]  {\it $ {card}  (\mathcal P )  = 2$ and  $ {card}  (\mathcal V ) = 1$.}
We have 
$\mathcal V = \{ D \}$ or $\{ C \}$. 

\medskip

 \item[(${\mathcal P}2{\mathcal V}2$)]  {\it $ {card}  (\mathcal P )  = 2$ and  $ {card}  (\mathcal V ) = 2$. }  
We have,  
$\mathcal V = \{ B, D \}$  or $\mathcal V = \{ 
C, D\}$. 

\medskip 
 
\item[(${\mathcal P}2{\mathcal V}3$)]  {\it $ {card}  (\mathcal P )  = 2$ and  $ {card}  (\mathcal V ) = 3$. }   
We have necessarily 
$\mathcal V = \{ B, C, D\}$.

\end{itemize} 
 
\medskip

During the proof of Theorem 
\ref{t:parallel}, all the above configurations will be excluded by exploiting the optimality conditions  supplied by Proposition \ref{p:clarke},   exception made for:

\begin{itemize}

\item[ (${\mathcal P}2{\mathcal V}1$)]  in the case when  $\mathcal V  = \{ C \}$

\medskip
\item[(${\mathcal P}2{\mathcal V}2$)]  in the case when  $\mathcal V  = \{ C, D \}$. 

\end{itemize}
 
These specific cases will be treated by invoking the following result.
It asserts that, if the two candidate values in formula \eqref{f:minimum}
associated with the pairs of opposite sides of a quadrilateral coincide,
and the quadrilateral is not close (in a quantified way) to the
isosceles trapezium $Q^*$ defined in Theorem \ref{t:trapezium},
then it cannot be optimal. 

 The
proof  of this proposition  relies on certified numerical computations, 
and it 
  is deferred to Section \ref{d11}.
   To make such proof simpler, we adopt here a different normalization: in place of fixing the area, we fix at  $1$ the length of the longest side, 
   and we work 
with no loss of generality with quadrilaterals with  longest side of vertices 
   $C=(1,0)$ and $D=(0,0)$. Then  the confidence zone consists in quadrilaterals 
 whose further vertices $A$ and $B$ do not belong to suitable rectangles centred at the vertices $A^*$ and $B^*$ of the corresponding rescaling of the
 isosceles trapezium $Q^*$ (see Figure  \ref{dfig6}).

\begin{proposition}\label{d13}
 Let $Q$ be a convex quadrilateral labelled as in Definition \ref{d:labels1}, 
  with the longest side of vertices 
   $C=(1,0)$ and $D=(0,0)$, and lying 
  in the upper half-plane. 
 Assume moreover that the two candidate values in
\eqref{f:minimum} associated with the pairs of opposite sides
$(S_1,S_3)$ and $(S_2,S_4)$ are equal.   
Consider the following rectangular neighborhoods of vertices $A,B$ of the conjectured optimal trapezium:
\[
\begin{aligned}
 \mathcal	R_A={}&[1432/4096,1500/4096]\times[2835/4096,2957/4096]\\
	={}&[0.349609375,0.3662109375]
	\times[0.692138671875,0.721923828125],\\
 \mathcal	R_B={}&[2596/4096,2662/4096]\times[2835/4096,2957/4096]\\
	={}&[0.6337890625,0.64990234375]
	\times[0.692138671875,0.721923828125].
\end{aligned}
\]
Certified computations show that if $A \notin \mathcal R_A$ or $B \notin \mathcal R_B$ then $  Q$
  is not optimal for problem \eqref{f:pb}. 
  \end{proposition}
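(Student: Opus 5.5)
The plan is a branch-and-bound exclusion over the two free vertices. With $C=(1,0)$ and $D=(0,0)$ fixed, $Q$ is determined by $A=(a_x,a_y)$ and $B=(b_x,b_y)$. These range over a compact parameter set $K\subset\R^4$ defined by four conditions: convexity and clockwise labelling, the upper half-plane, $|CD|=1$ being the longest side, and the equality constraint
\[
E(A,B):=\theta_{13}\bigl(|Q|+2|\widetilde Q_{13}|\bigr)-\theta_{24}\bigl(|Q|+2|\widetilde Q_{24}|\bigr)=0 .
\]
Here the candidates are those of Proposition \ref{p:chat} for the pairs $(S_1,S_3)$ and $(S_2,S_4)$, replaced by $d^2$ when the pair is parallel. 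The goal is to show that on $K\setminus(\mathcal R_A\times\mathcal R_B)$ the scale-invariant quantity $\Phi^2(Q)=m^2(Q)/|Q|$ is strictly below the value attained by some explicit competitor. We take that competitor to be $Q^*$ itself, whose value $\alpha^*$ we enclose rigorously in an interval by solving the defining equation with a certified interval Newton step. Any such $Q$ cannot be a maximizer, which is what the proposition claims.

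The key steps are as follows.

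First, bound the domain. Since $|CD|=1$ is the longest side, $A$ and $B$ lie in $[-1,2]\times[0,1]$, which is actually much smaller after using $|DA|,|AB|,|BC|\le1$. We also discard degenerate regions a priori. By \eqref{f:estimatearea}, $\Phi^2\le 4|Q|$, and by Corollary \ref{c:triangle} and continuity, nearly-triangular shapes have $\Phi^2$ close to at most $\pi/3<\alpha^*$. Together these remove boxes where $|Q|$ is small or some angle is close to $\pi$.

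Second, subdivide the remaining set of $(A,B)$ into dyadic boxes; the denominator $4096$ in $\mathcal R_A,\mathcal R_B$ suggests a final resolution of $2^{-12}$. On each box $X$, evaluate in interval arithmetic:
\begin{itemize}
\item the interval extension of $E$, and discard $X$ if $0\notin E(X)$;
\item an upper bound for $\Phi^2$ on $X$, obtained from the minimum in \eqref{f:minimum} of interval upper bounds of a few cheap candidates (the smallest inner angle $\alpha_i|Q|$, the two opposite-side candidates, any $d_k^2$), divided by a lower bound for $|Q|$.
\end{itemize}
Since $m^2$ is the minimum of the candidates, any single candidate gives an upper bound. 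We discard $X$ if this bound is $<\alpha^*_{\rm low}$, and otherwise bisect. Each candidate is a closed formula in the vertices: angles via $\arccos$ or $\operatorname{atan2}$, $P_1,P_2$ via line intersections, and $|\widetilde Q_j|$ via the shoelace formula of the Remark, with the configuration of $\widetilde Q_j$ (which vertices it contains) fixed per box or handled by case splitting on sign conditions.

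Third, terminate when every surviving box lies inside $\mathcal R_A\times\mathcal R_B$.

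The main obstacle is near-degeneracy of the opposite-side candidates. When $S_1\parallel S_3$ or $S_2\parallel S_4$, the point $P_j$ goes to infinity and the interval enclosures of $\theta_j(|Q|+2|\widetilde Q_j|)$ blow up, even though the candidate converges continuously to $d^2$. The optimum $Q^*$ is a trapezium, so boxes near $\mathcal R_A\times\mathcal R_B$ are exactly of this type. I would first reparametrize the $(S_1,S_3)$ candidate in a form uniform across parallelism, for instance via the radius $\rho$ with $\rho^2\theta=|Q|+2|\widetilde Q|$ expressed through the angle $\theta$ and the distances, so that the formula tends to $d^2$ without singular terms. If that fails, I would use the cruder but uniform bound $m^2\le(\text{width in the direction normal to }S_1)^2$ coming from the chord argument in the proof of Proposition \ref{p:chat}. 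The other difficulty is that the margin $\alpha^*-\Phi^2$ vanishes at $Q^*$, so the box count is driven by the size of $\mathcal R_A,\mathcal R_B$. Their chosen dimensions reflect the radius at which the bound above first becomes strictly less than $\alpha^*$ everywhere outside them.
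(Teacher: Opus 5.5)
Your plan is correct and is essentially the paper's proof: an interval-arithmetic branch-and-bound over $(A,B)$ with $C,D$ fixed, discarding boxes that are inadmissible (a side longer than $1$, or non-convex), that have small area via $\Phi^2\le 4|Q|$, that make the two opposite-side candidates incompatible, or on which some candidate upper bound lies below a threshold just under $m^2(Q^*)$ (the paper uses $\sigma=1.0496$ for $m$); the rectangles $\mathcal R_A,\mathcal R_B$ are then the bounding rectangles of the final survivors. The near-parallel obstacle you flag is handled along the lines you suggest, by the stable rewriting $\phi_{P_1}=\frac{\theta_1}{2\sin\theta_1}\,\frac{d(B,L_3)\,d(C,L_1)+d(A,L_3)\,d(D,L_1)}{|Q|}$, with $L_i$ the supporting line of $S_i$, which tends continuously to $d^2/|Q|$ as $\theta_1\to 0$.
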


 \begin{figure}[ht]
 \center
 \includegraphics[width=0.99 \textwidth ]{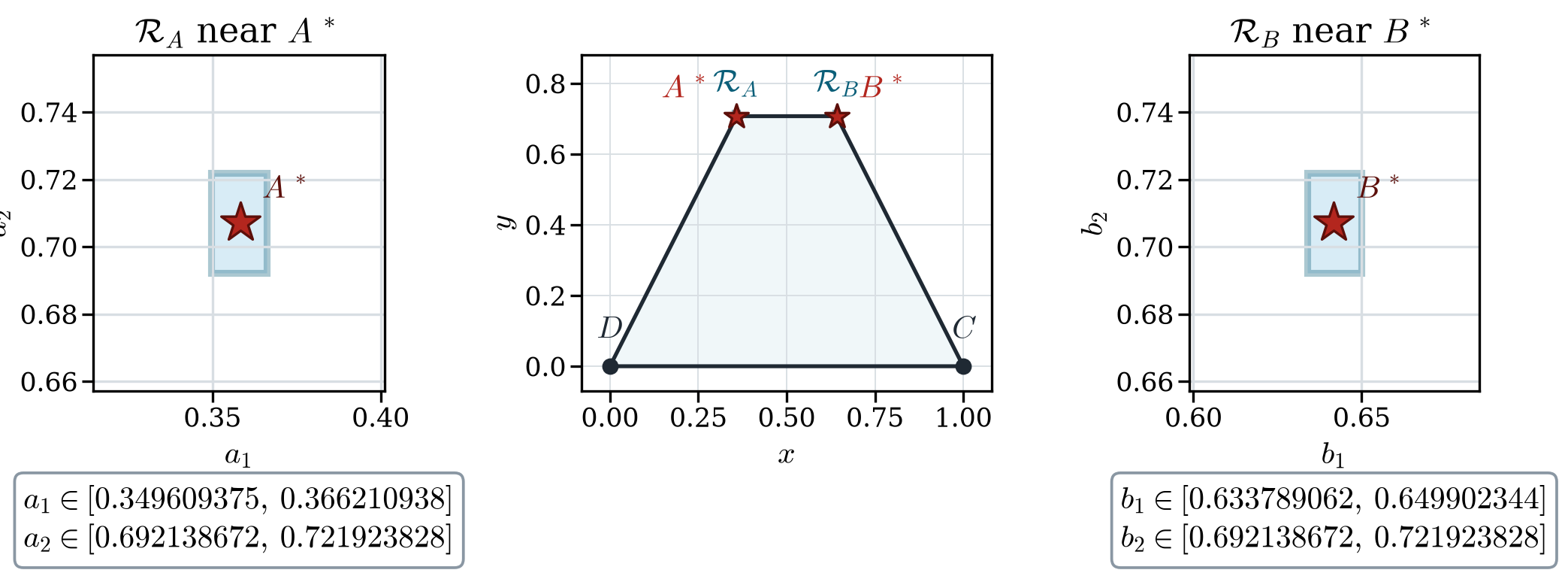}
 \caption{ The confidence rectangles $ {\mathcal R}_{A}$ and   $ {\mathcal R}_{B}$ in Proposition \ref{d13}. } 
 \label{dfig6}   
 \end{figure} 

 \medskip
 In view of Theorem \ref{t:parallel}, 
the final step to obtain  Theorem \ref{t:trapezium} consists in proving:

  \begin{theorem}\label{t:Qstar}   If a trapezium is optimal for problem \eqref{f:pb}, 
 then up to rescalings  it is the isosceles trapezium $Q^*$ defined in Theorem \ref{t:trapezium}.    \end{theorem}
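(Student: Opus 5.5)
Let $Q$ be an optimal trapezium, normalized to $|Q|=1$. The plan has two stages. First, exclude parallelograms. Second, treat trapezia with exactly one pair of parallel sides by classifying the active fences, discarding every configuration through Clarke's condition (Proposition \ref{p:clarke}) or an explicit perturbation, and solving the equal-length equations in the one surviving configuration.

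\emph{Parallelograms.} Suppose $Q$ is a parallelogram with angles $\alpha\le\pi/2$ and $\pi-\alpha$ and heights $d_1,d_2$. Every opposite pair is parallel, so Proposition \ref{p:chat} gives
$$m^2(Q)=\min\{\alpha,d_1^2,d_2^2\},$$
and the area satisfies $d_1 d_2=\sin\alpha$. Hence $\min\{d_1^2,d_2^2\}\le \sin\alpha$, so
$$m^2(Q)\le\min\{\alpha,\sin\alpha\}=\sin\alpha\le 1.$$
Lemma \ref{l:existence} uses that $\Phi(Q^*)^2=\alpha^*$, and $\alpha^*>1$ since $\alpha^*\approx 1.1018$. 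So a parallelogram would give $\Phi(Q)^2\le 1<\alpha^*$, contradicting optimality. This step is elementary.

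\emph{Exactly one pair of parallel sides.} Label the parallel sides $S_1$ (shorter) and $S_3$ (longer), at distance $h$. Let $P$ be the intersection of the support lines of $S_2$ and $S_4$, with aperture $\theta$, and let $\alpha_C,\alpha_D$ be the acute base angles on $S_3$. The base angles satisfy $\alpha_C+\alpha_B=\alpha_D+\alpha_A=\pi$. The candidates in \eqref{f:minimum} are therefore $h^2$, $\theta(1+2|\widetilde Q|)$, $\alpha_C$ and $\alpha_D$; the obtuse angles are never minimal.

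\begin{itemize}
\item By Lemma \ref{l:laws}, every side hosts an endpoint of an active fence. The only fences touching $S_1$ are $\gamma_{13}$ (the segment) and the arcs at $A$ and $B$, and those arcs are never active. So $\gamma_{13}$ must be active. In the same way, $S_2$ and $S_4$ force that either $\gamma_{24}$ is active or both base angles $\alpha_C,\alpha_D$ are active.
\item This leaves the configurations $\{h\}\cup\{\gamma_{24}\}$ plus a subset of $\{C,D\}$, and $\{h,C,D\}$.
\item Parametrize $Q$ by the vertex coordinates modulo similarities, which gives three parameters: $h$ and the two base angles, with area fixed. Compute the gradients of the active candidates explicitly.
\item When few fences are active, there is a perturbation increasing all active candidates, so Clarke's condition fails. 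For example, if only $h^2$ and the arcs at $C,D$ are active, increase both base angles while keeping the area fixed by shortening the bases. If the angles at $C$ and $D$ differ, only the smaller one can be active; a shear that raises the smaller angle while fixing $h$ and the area shows that Clarke's condition cannot hold.
\item The expected survivor is $\gamma_{13},\gamma_{24}$ together with the arcs at both $C$ and $D$, so $\alpha_C=\alpha_D=\alpha$ and $Q$ is isosceles.
\end{itemize}

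\emph{Solving the isosceles case.} Setting $h^2=\alpha$ gives the height $\sqrt{\alpha}$. For the symmetric trapezium with bases $b<B$, height $h$ and base angle $\alpha$, we have $\theta=\pi-2\alpha$ and $B-b=2h\cot\alpha$, and $|\widetilde Q|$ is the area of the small triangle on $S_1$ with apex $P$. Imposing
$$(\pi-2\alpha)(1+2|\widetilde Q|)=\alpha, \qquad (b+B)h/2=1,$$
and eliminating $b,B$ should reduce to
$$\left(\tfrac{\pi}{2}-\alpha\right)(\tan^2\alpha+\alpha^2)=\alpha^2\tan\alpha.$$
This equation has a unique root in $(\pi/3,\pi/2)$, which follows from monotonicity of the difference of the two sides. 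One must also confirm that Clarke's condition actually holds there with nonnegative multipliers.

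\emph{Main obstacle.} The hardest part is ruling out the non-isosceles configurations in which the segment, $\gamma_{24}$ and exactly one angle are active. Three active constraints in three variables can a priori satisfy Clarke's condition, so the first thing to try is showing that the gradients of $h^2$, the $P$-candidate and $\alpha_D$ leave a common ascent direction, namely rotating $S_2$ about its midpoint to increase $\alpha_C$. If this sign computation is not clean, I would fall back on certified numerics analogous to Proposition \ref{d13}.
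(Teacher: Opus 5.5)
Your parallelogram step is correct, and more direct than the paper's Steiner symmetrization: $d_1d_2=\sin\alpha$ gives $m^2(Q)\le\sin\alpha\le 1<\alpha^*$. Your table of active sets and your final isosceles computation also match the paper.

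\textbf{The gap.} You never exclude the configurations in which $\gamma_{13}$ and $\gamma_{24}$ are active together with at most one base angle, i.e.\ $\mathcal P=\{P_1^\infty,P_2\}$ with $\mathcal V=\emptyset$ or $\mathcal V=\{D\}$.
\begin{itemize}
\item The blanket claim that ``few active fences'' always leave a common ascent direction is unproved. For instance, at an isosceles trapezium where only $h^2$ and $\phi_{P_2}$ are active, your shear is neutral to first order for both. Deciding whether their gradients are antiparallel needs a real computation.
\item For the three-fence case you leave the matter open, and the direction you suggest fails. Rotating $S_2$ about its midpoint to raise $\widehat C$ leaves $h$ and $\widehat D$ unchanged, and $|Q|$ unchanged to first order. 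Meanwhile $\phi_{P_2}$ changes at rate $\theta_2|P_2B||P_2C|-(1+2|T_2|)$, which is about $0.72-1.17<0$ at $Q^*$ and hence nearby. So it is not an ascent direction. Inserted into Clarke's identity, it only yields the relation $\theta_2|P_2B||P_2C|=1+2|T_2|$, as in Section \ref{sec:notrap}, which is not contradictory by itself.
\item An unspecified appeal to certified numerics does not fill this.
\end{itemize}

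\textbf{What the paper does instead.} It closes these cases with a global inequality rather than a first-order argument. Set $S=\cot\widehat C+\cot\widehat D$. Then $|T_2|=\ell_1^2/(2S)$, and $|Q|=1$ gives $\ell_1=1/d-dS/2$. Substituting into $d^2=\theta_2(1+2|T_2|)$ yields $d^4=\theta_2/\bigl(S(1-\theta_2S/4)\bigr)$, which is decreasing in $S$ because $\theta_2S<2$. Next, with $y=(\widehat C+\widehat D)/2$ one has $S\ge 2\cot y$ and $\theta_2=\pi-2y$. These give $d^4\le\Psi(y)$, where $\Psi$ is strictly decreasing and $\Psi(\alpha^*)=(\alpha^*)^2$ is exactly the equation defining $\alpha^*$. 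In both configurations $y>\alpha^*$: each base angle is $\ge m^2(Q)\ge\alpha^*$, and at least one is strictly larger. Hence $d^2<\alpha^*\le m^2(Q)=d^2$, a contradiction.

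\textbf{How your shear idea can be completed.} Away from the isosceles case it does work. The shear is affine with unit determinant and fixes the lines of $S_1,S_3$, so it preserves $h$, $|Q|$ and $|T_2|$. At the same time $\theta_2$ strictly increases as $P_2$ slides, relative to $S_1$, toward the perpendicular bisector of $S_1$. So both $\phi_{P_2}$ and the smaller base angle increase, and Clarke's condition fails because $\nabla(h^2/|Q|)\neq 0$. The isosceles case with $\mathcal V=\emptyset$ still needs the monotonicity of $\Psi$.

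\textbf{Two smaller points.}
\begin{itemize}
\item $\widehat C,\widehat D$ need not both be acute. What you actually need is $\widehat C+\widehat D<\pi$, which holds because $S_1$ is the shorter base.
\item Your perturbation for $\{h,C,D\}$ keeps $h$ fixed, so it does not raise $h^2$. It must be combined with a small move increasing $h^2/|Q|$, or replaced by the paper's reduction to Case 1 of Lemma \ref{l:laws}.
\end{itemize}
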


 The proof  of Theorem \ref{t:Qstar} (which will be detailed in Section \ref{sec:trap}), 
 consists  again in determining first of all which are all  the possible configurations of the active fences in an optimal quadrilateral, assuming now that it is a trapezium, and then we shall rule  them out one by one, except for the one which will lead to $Q^*$. 
In order to list them,  let us make some observations.

  \begin{remark}[Parallelograms are not optimal]\label{r:rectangles} 
 If $Q$ is a parallelogram, we claim that it cannot be optimal for problem \eqref{f:pb}. Indeed, we observe first of all that 
 a Steiner symmetrization with respect to the direction orthogonal to one pair of parallel sides transforms $Q$ into a rectangle, leaves unchanged the area and   does not decrease   $m ( Q)$ (because the minimal inner angle increases,  while among the two distances between parallel sides, one is unchanged and the other one increases).
 On the other hand, 
 if $Q$ is a rectangle, then $\Phi ( Q) \leq 1$, so that $Q$ cannot be optimal for problem \eqref{f:pb}.  
 Indeed,  assuming that $|Q| = 1$, and letting $\ell$ and $\frac{1}{\ell}$ be  the lengths of its sides, by using Lemmas \ref{l:structure} and \ref{l:formulas}, we see that
 $$m^2 (Q) = \min \Big \{ \frac{\pi}{2}, \ell^2 , \frac{1}{\ell^2} \Big \} \leq 1 \,,$$
 with equality if and only if $Q$ is a square. 
 \end{remark}

 In the light of the above observation, let us fix the labelling of $Q$ under the assumption  that it is a trapezium but not a  parallelogram.

\begin{definition}[Labelling of $Q$ assuming it is a trapezium]\label{d:labels3} 
 Let $Q$ be labelled as in Definition \ref{d:labels1}, and assume it is a trapezium, with parallel sides  $S_1$ and $S_3$. 
 We still denote by $P _ 2$  the intersection between the support lines of $S_2$ and $S_4$, 
which we assume  closer to $S_1$ than to $S_3$ 
and we denote by $T_2$ the triangle
$T _ 2 := \triangle (P_2, A, B )$,  and by $\theta _2$ its inner angle at $P_2 $, see Figure \ref{fig5}. 
\end{definition}

 \begin{figure}[ht]
 \center
 \includegraphics[width=0.3\textwidth]{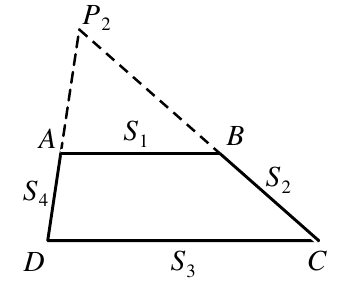}
 \caption{Labelling of $Q$ according to Definition \ref{d:labels3}.}      
 \label{fig5}   
 \end{figure}

 \begin{remark}[Comparison between $\widehat A,  \widehat C$, and between $\widehat B, \widehat D$ ]\label{r:ABCD} 
If  $Q$  is labelled as 
in Definition \ref{d:labels3}, we have
$$\widehat A > \widehat C \qquad \text{ and } \qquad \widehat B > \widehat D \,.$$  
Indeed, from the sum of the interior angles of $T_2$, we have
 $(\pi - \widehat A ) + (\pi - \widehat B) + \theta _ 2 = \pi$, so that $\widehat A + \widehat B> \pi$. Hence
 $$\widehat A > \pi - \widehat B = \widehat C \qquad \text{ and } \qquad \widehat B > \pi - \widehat A = \widehat D \,.$$ 
 \end{remark}

\bigskip 
 If $Q$ is labelled as 
in Definition \ref{d:labels3},  
 we call {\it active points of  $Q$}  the family $Act (Q)$ of the centers of an arc of circle which is active fence of $Q$, and 
 by abuse of notation,  we say that $P _ 1 ^ \infty \in Act (Q)$ if a line segment $\gamma _{13}$ (orthogonal to $S_1$ and $S_3$) 
 is an active fence of $Q$.  Then we set 
  \begin{equation}\label{f:VPbis} 
 \mathcal V = Act (Q)  \cap \big \{ A, B, C, D \} \,, \qquad \mathcal P =Act (Q)  \cap  \big \{ P_1 ^\infty, P _2 \} \,.
 \end{equation}

\medskip
 \begin{remark}[On the family $\mathcal V$ in an optimal quadrilateral which is  a trapezium] 
\label{r:V2}
Let $Q$ be a  solution to problem \eqref{f:pb} which is a trapezium, and 
assume that the family $\mathcal V$ in \eqref{f:VPbis} is not empty. 
Recalling from   Proposition \ref{p:chat}   that a   necessary condition for a vertex of $Q$ to belong to $\mathcal V$ is that 
 its inner angle has  minimal amplitude $\alpha _{min}$ among all the inner angles of $Q$, in view of Remarks \ref{r:rectangles} and \ref{r:ABCD} we have that 

  \begin{itemize}
 \item[(i)] The family $\mathcal V$ has at most $2$ elements. 
 
 \smallskip 
 \item[(ii)] If $Q$ is labelled as in Definition \ref{d:labels3}, then $\mathcal V$ contains  neither  $A$ nor $B$. 
 \end{itemize} 

    \end{remark}

\medskip 
  Now by using  Lemma \ref{l:laws},
 together with Remarks \ref{r:rectangles}   and  \ref{r:V2}, 
we obtain the following table, up 
to cyclic relabelling of the vertices and sides of $Q$.

\medskip 
$\bullet$ {\it Table of  possible configurations of  $Act(Q)$, if $Q$ is optimal and is a trapezium:}   
  
\smallskip
  \begin{itemize}
\item[(${\mathcal P}1{\mathcal V}2$)]
 {\it $ {card}  (\mathcal P )  = 1$ and  $ {card}  (\mathcal V ) = 2$}. 
We have $\mathcal P = \{P _ 1 ^ \infty\}$  and  $\mathcal V = \big \{C, D \big \}$.
 
\medskip
 
\item[(${\mathcal P}2{\mathcal V}0$)]
  {\it $ {card}  (\mathcal P )  = 2$ and  $ {card}  (\mathcal V ) = 0$}. 
We have  $\mathcal P = \{P _ 1 ^ \infty, P_2\}$.  

\medskip 

\item[(${\mathcal P}2{\mathcal V}1$)]  {\it $ {card}  (\mathcal P )  = 2$ and  $ {card}  (\mathcal V ) = 1$.}
We have  $\mathcal P = \{P _ 1 ^ \infty, P_2\}$ and  $\mathcal V = \{ D \}$. 

\medskip

\item[(${\mathcal P}2{\mathcal V}2$)]  {\it $ {card}  (\mathcal P )  = 2$ and  $ {card}  (\mathcal V ) = 2$.}  
We have  $\mathcal P = \{P _ 1 ^ \infty, P_2\}$ and   $\mathcal V = \big \{C, D \big \}$. 
\medskip

\end{itemize} 
 
 In the proof of Theorem \ref{t:Qstar},  all the above  configurations will be excluded, except for the last one, which will lead to  the conclusion that $Q = Q^*$.

  \medskip

\bigskip
\section{Proof of Theorem \ref{t:parallel}}\label{sec:notrap}

 Assume by contradiction that $Q$ is a solution to problem \eqref{f:pb} which is not a trapezium. 
  With no loss of generality, we can assume that  $|Q | = 1$.  Moreover,  denoting by $Q^*$ the trapezium introduced in Theorem 
  \ref{t:trapezium} and by $D _1$ the  disk  of area $1$, it holds  
  \begin{equation}\label{f:plage} 1.10  <  m  ^ 2 ( Q^* ) \leq m ^ 2 ( Q) < m ^ 2 ( D  _1) =  \frac{4}{\pi}  < 1.28\,,
  \end{equation} 
  where the first and the last inequalities follow from direct computation, the inequa lity $m  ^ 2 ( Q^* ) \leq m ^ 2 ( Q) $ 
  holds from the assumed optimality of $Q$, and the inequality  $m ^ 2 ( Q) < m ^ 2 (D _1)$ follows from \cite[Theorem 1]{EFKNT}. 
  
  We are going to examine separately each of the configurations 
  in the table given in Section \ref{sec:setup}, 
      and to show that all of them lead to a contradiction.  
      
 In order to exploit Proposition \ref{p:clarke}, 
it will be useful to introduce the following functions which depend on $Q$ through the cartesian coordinates of its vertices (and thus may be viewed as functions defined on $\R ^8$):
\begin{equation}\label{eq:fence-formulas}
	\begin{aligned} 
&   \phi   _{P_1}  (Q) : = \frac{1}{|Q|} \theta _ 1 (   |Q|   + 2 |T_1|)  \,,  \qquad 
  \phi    _{P_2} (Q) :=\frac{1}{|Q|}  \theta _ 2 (  |Q|     + 2 |T_2|)  
\\ 
&  \   \phi   _{A} (Q) :=  \widehat A  \EEE  \, , \quad 
   \phi   _{B} (Q):=  \widehat B  \,  , \quad  
  \phi  _{C} (Q):=    \widehat C    \, , \quad 
  \phi    _{D}(Q):=   \widehat D   \,.
\end{aligned} 
\end{equation}
By Lemma \ref{l:formulas}, we have 
$$\Phi  ^2( Q) = \min \big \{   \phi   _{P_1} (Q) ,   \phi  _{P_2}  (Q) ,    \phi  _A  (Q)  ,     \phi   _B  (Q) ,    \phi   _C  (Q) ,     \phi \EEE  _D  (Q) \big \} \,.
$$

\medskip 
\underbar {Case  (${\mathcal P}0{\mathcal V}2$)}: {\it $\mathcal P   = \emptyset$ and  $\mathcal V =  \{B, D  \}$}.

 \begin{figure}[ht]
 \center
 \includegraphics[width=0.99\textwidth]{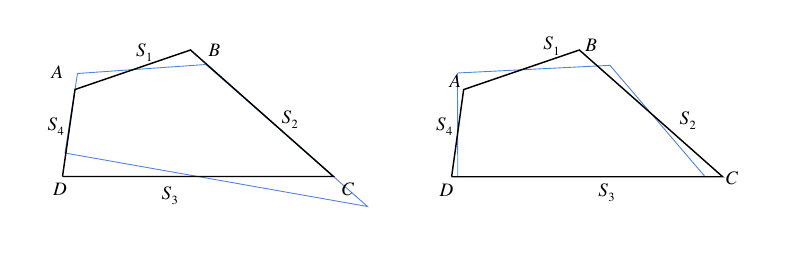} 
 \caption{On the cases $\mathcal P = \emptyset$ and $\mathcal V =  \{B, D  \} $  (left) or $\mathcal V =  \{B, C, D  \} $ (right)}      
 \label{fig3}   
 \end{figure} 
 
We consider the perturbed quadrilateral  $Q _\e$ obtained from $Q$ by performing a rotation 
of an angle $\e$ of the sides $S_1$ and $S_ 3$ around their mid-points  (see Figure \ref{fig3}, left).
Under this movement, 
 by properly choosing the directions of the rotations, 
 the inner angles 
$\widehat{ B}$ and $\widehat D$   increase at first order  in $\e$.
  On the other hand, the area of the perturbed quadrilateral satisfies $|Q_\e| = |Q| + o ( \e)  = 1 + o ( \e )  $. Hence after rescaling we have
 $\Phi \left ( \frac {Q _\e }  {  \sqrt { |Q _\e| }  } \right ) > \Phi ( Q)$. 
 
 \medskip
\underbar {Case  (${\mathcal P}0{\mathcal V}3$)}:   {\it $\mathcal P   = \emptyset$  and  $
 \mathcal V  = \{ B,  C  , D  \} 
$}. 
  
   \smallskip 
We 
 consider the perturbed quadrilateral  $Q _\e$ obtained from $Q$ by performing a rotation,  around their mid-points, 
 of an angle $\e$ of the sides $S_2$ and $S_4$,  and of an angle $\e'$ of the side $S_ 1$. 
 Under this movement, similarly as above we have that, 
by properly choosing the directions of the rotations, 
 the inner angles 
$\widehat  C$ and $\widehat  D$   increase at first order  in $\e$
  (see Figure \ref{fig3}, right).   Now,  to ensure that also the inner angle $\widehat  B$ increases at first order in $\e$,  we have to choose
  not only the direction of the rotation of $S_1$, but also the relation between $\e'$ and $\e$. 
  Let us denote by $A ^ \e, B ^ \e, C ^ \e , D ^ \e$ the vertices of $Q _\e$, by $M _i$ the midpoint of the side $S_i$ of $Q$, and by 
$P  ^\e   $ the intersection  
 between the side $[A ^ \e, B^ \e]$ of $Q _\e$ and the side $S_2$ of $Q$. 
 By comparing the inner angles of the triangles $\triangle ( P   ^\e  , M _ 1, B)$ and $\triangle (P   ^\e  , M _ 2 , B ^ \e)$, we see immediately that,
 in order to have $\widehat  B ^ \e  =\widehat  B  + \e$, it is enough to choose $\e' = 2 \e$.   
After noticing that the area of the perturbed quadrilateral still satisfies $|Q_\e| = |Q| + o ( \e)   = 1 + o ( \e)  $, we conclude that 
 $\Phi \left ( \frac {Q _\e }  {   \sqrt{ |Q _\e| } }   \right ) > \Phi ( Q)$.

\bigskip 
\underbar {Cases  (${\mathcal P}1{\mathcal V}2$) and (${\mathcal P}1{\mathcal V}3$)}:  {\it $ \mathcal P   = \{ P_2\}$ and   $\mathcal V  = \{B, C  \},  \text{ or }  \{B, D  \}, \text{ or }  \{B, C, D  \}$.
}

\smallskip 
We consider the same perturbed quadrilateral $\widetilde Q _ \e$ constructed 
in the proof of Lemma \ref{l:laws}, see Figure \ref{fig1} right.
Namely, we perform an infinitesimal parallel  movement of $S_1$ and 
$S_3$ in their normal inward directions, so that  the active fence $\gamma _{24}$ of $Q$ splits also $\widetilde  Q _\e$ into two portions of equal area. 
By rescaling $\widetilde Q_\e$, we obtain a quadrilateral  $Q_\e$ having area equal to $1$,  in which the fence $\gamma^\e _{24}$ obtained by rescaling $\gamma _{24}$ has  length strictly larger  than $\gamma _{24}$. 
 Then we argue as in Case 2b. of Lemma \ref{l:laws},  namely we observe that $Q _\e$  satisfies the geometric assumptions of 
one of the cases when ${\rm card} (\mathcal P ) = 0$ and ${\rm card} (\mathcal V ) = 2$ or $3$ already examined.

\bigskip
\underbar{Case (${\mathcal P}2{\mathcal V}0$)}:  {\it $ \mathcal P   = \{ P_1, P_2\}$  and  $\mathcal V  = \emptyset$}. 

 \smallskip
\underbar{Step 1.} We claim that in this case $Q$ is cyclic. 
 \begin{figure}[ht]
 \center
 \includegraphics[width=0.99\textwidth]{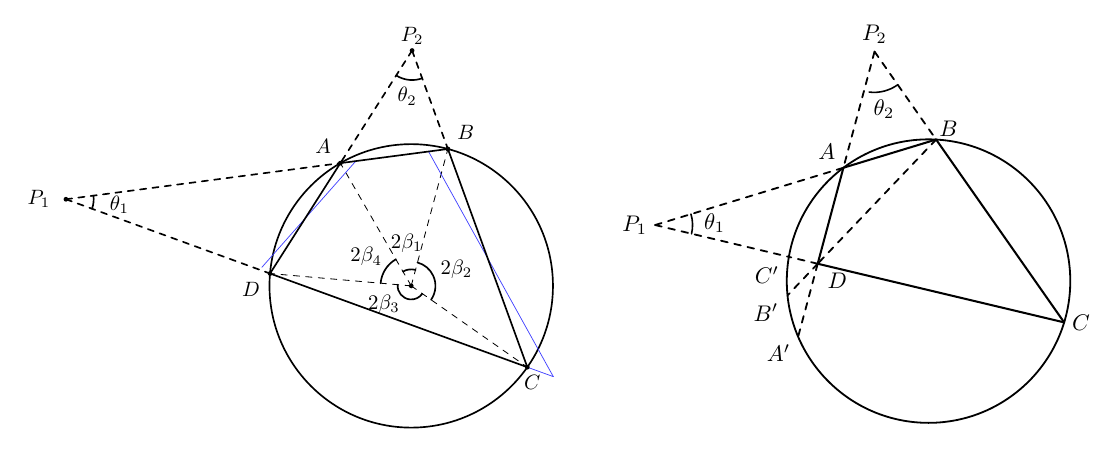}
 \caption{On the cases $\mathcal P =\{ P_1, P_2\}$ and $\mathcal V =  \emptyset$  (left) or $\mathcal V =  \{ D  \} $ (right)}      
 \label{fig4}   
 \end{figure}

Then, since by assumption $Q$ is optimal for problem \eqref{f:pb} and we have $Act ( Q) = \{ P _1, P _ 2 \}$,  
by Proposition \ref{p:clarke} there exists $\lambda _1 \geq 0, \lambda _ 2 \geq 0$, with 
$\lambda _ 1 + \lambda _ 2 = 1$,  such that  
$$\lambda _ 1 \nabla   \phi    _{P _1} (Q)  + \lambda_2 \nabla   \phi   _{P_2} (Q)  = 0\,.$$

We now consider two specific perturbations of $Q$ (and hence of  the family of its vertices in $\R^ 8$): 
we let $Q_\e$ be  either the quadrilateral 
 obtained by rotating 
 $S_2$ around its midpoint $M_2$  of a small angle $\e>0$ so that $\widehat B _ \e >  \widehat B$, 
 or the quadrilateral  obtained by rotating $S_4$ around its midpoint
  of a small angle $\e>0$, so that $\widehat A _ \e >  \widehat A$ (see Figure \ref{fig4}, left). 
 
 Since, under  these perturbations, 
neither 
$\theta _ 1 (  |Q|  + 2 |T_1|)$  nor $|Q|$ are affected at first order, 
by Proposition \ref{p:clarke} there exists a number $\lambda _ 2 \geq 0$ such that 
$$\lambda _ 2 \frac{d}{ d \e}  \big [ (\theta _ 2 + \e)  \big (   |Q _\e |    + 2 |T _{2, \e} | \big ) \big ] \Big | _{ \e = 0} = 0\,.  $$  
The possibility that $\lambda _ 2 = 0 $ is easily ruled out, since in this case Proposition \ref{p:clarke} would give 
$$\frac{d}{ d \e}  \phi_{P_1} ( Q_ \e)  \Big|_{\e=0} = 0 $$ 
  for any perturbation   $Q _ \e$   of  $Q$, which leads to a contradiction    (indeed,  such derivative is nonzero for instance  if $Q_\e$ is obtained by rotating  
  the side $S_3$ around $P_1$).

We infer that we may divide by $\lambda _2$ and obtain   
\begin{equation}\label{f:prederiv} 
 \big ( 1 + 2 |T _2 | \big )  +  \theta _ 2 \frac{d}{ d \e}   \big ( 2  |T _{2, \e} | \big )  \big | _{ \e = 0}   = 0 \,.
 \end{equation} 
Let us show that, if $Q_\e$ is obtained by rotating 
 $S_2$ around its midpoint $M_2$  so that $\widehat B _ \e >  \widehat B$, 
 it holds that  
\begin{equation}\label{f:deriv} 
\frac{d}{ d \e}   \big ( 2  |T _{2, \e} | \big )  \big | _{ \e = 0} =   -     |P _ 2 B| | P _ 2 C| \,.
\end{equation} 
Indeed, denoting by  $A _\e, B _ \e, C_\e, D _\e$  the vertices of $Q _\e$ and by
 $P _{2, \e}$, $T _ { 2, \e}$ the  perturbations of the point $P _ 2$ and of the triangle $T_2$ respectively, 
 we have
$$\begin{aligned}
 |T _{2, \e} | & = |T_2| - | P _ 2 B B _\e P _{2, \e} | = |T_2| -  \big (  
| P _ 2 M _ 2 P _ { 2, \e } | -   | B  M _ 2  B  _ { \e } |   ) 
\\ & = |T_2| -  \Big (  
\frac{1}{2} | P _ 2 M _ 2  | ^ 2  \sin \e  -  \frac{1}{2}  | B  M _ 2 | ^ 2 \sin \e \Big ) + o ( \e) \,. 
\\
& =  |T_2| -  
\frac{1}{2} \big  (   | P _ 2 M _ 2  |  +  | B  M _ 2 |   \big  )  \big (   | P _ 2 M _ 2  |  -  | B  M _ 2 |  \big   ) \,  \e + o ( \e) 
\\
& =  |T_2| -  
\frac{1}{2} \big  (   | P _ 2 M _ 2  |  +  | M _ 2 C |   \big  )  \big (   | P _ 2 M _ 2  |  -  | B  M _ 2 |  \big   ) \,  \e + o ( \e) 
\\
& =  |T_2| -  
\frac{1}{2}   | P _ 2  C |    | P _ 2  B |  \,   \e + o ( \e) \,.
  \end{aligned} 
$$ 
Inserting \eqref{f:deriv} into \eqref{f:prederiv}, we obtain 
\begin{equation}\label{f:prodotto1} 
  |P _ 2 B| | P _ 2 C | =  \frac{1}{\theta _ 2 } \big ( 1 + 2 |T _2 | \big ) \,. 
  \end{equation} 
  In a similar way, if $Q_\e$ is obtained by rotating 
 $S_4$ around its midpoint $M_4$  so that $\widehat A _ \e >  \widehat A$, 
we arrive at    \begin{equation}\label{f:prodotto2}
|P _ 2 A| | P _ 2 D | =  \frac{1}{\theta _ 2 } \big ( 1 + 2 |T _2 | \big ) \,. 
\end{equation} 
  By combining \eqref{f:prodotto1} and \eqref{f:prodotto2}, we see that 
  $$  |P _ 2 B| | P _ 2 C | = |P _ 2 A| | P _ 2 D |  \,, $$ 
 which implies that $Q$ is cyclic.  
 
 \smallskip 
\underbar{Step 2.} Denoting by $O$ the center of the  circumcircle of $Q$ and by $\beta _i $ the semi-angles with centre $O$ opposite to the chord $S_i$, i.e.: 
$$\b _ 1:= \frac{1}{2} \widehat { AOB } \, , \qquad \b _ 2:= \frac{1}{2}  \widehat   {BOC}\, , \qquad \b _ 3:=  \frac{1}{2}  \widehat   {COD} \, ,  \qquad \b _ 4:=  \frac{1}{2}  \widehat   
{DOA}\,, $$  
 it holds that
\begin{equation}\label{f:mf} m ^ 2 (Q) =   f ( \b _ 1, \b _3)= f (\b_2, \b _ 4)   \, , \quad \text { with }  \ 
  f (s, t)  : = (t - s) \frac{\sin ^ 2  ( s ) + \sin ^ 2  (t ) }{\sin ^ 2  ( t ) -\sin ^ 2  ( s )  }\,.
  \end{equation} 

Indeed, $m ^ 2 (Q)$ can be computed by inserting into  the equality \eqref{f:m2} 
the expressions of $\theta _2$ and of $|T _2|$ in terms of the angles $\beta _i$. We have 
$$\theta _ 2 = \beta _ 3- \beta _ 1   \,.$$  
On the other hand we observe that
$$\frac{ |T_2 |  }{ |T_2 | + 1 } = \Big ( \frac{ \ell _ 1}{\ell _ 3} \Big ) ^ 2  =\Big ( \frac{\sin \b _ 1 }{\sin \b _ 3} \Big ) ^ 2   \,,$$ 
so that
$$|T_ 2 | =  \frac{\sin ^ 2   \b_1   }{\sin ^ 2   \b_3  -\sin ^ 2   \b_1   } $$

Therefore,
$$\begin{aligned} m ^ 2 ( Q) 
& =  \theta _ 2 (   |Q|   + 2 |T_2 |) = (\beta _ 3- \beta _ 1 ) \Big ( 1 +  \frac{2 \sin ^ 2   \b_1   }{\sin ^ 2   \b_3  -\sin ^ 2   \b_1   }  \Big ) 
\\ 
& =  (\beta _ 3- \beta _ 1 ) \Big ( \ \frac{\sin ^ 2   \b_3  + \sin ^ 2   \b_1    }{\sin ^ 2   \b_3  -\sin ^ 2   \b_1   }  \Big )   = f ( \beta _ 1 , \beta _3) 
\,. 
\end{aligned} $$ 
The equality $m ^ 2 ( Q) = f ( \beta_2, \beta _ 4)$ is obtained by arguing in the analogous way, starting from
$m ^ 2 (Q) = \theta _ 1 ( 1 + 2 |T_1 |)$, which holds true since also $P _1$ is active.

\medskip 
\underbar{Step 3.} We have \(m^2(Q)=1\), contradicting \eqref{f:plage}. 

\smallskip 
We are going to exploit the fact that, thanks to Step 2,
  \begin{equation}\label{f:pbmax} 
m^2(Q)=f(\beta_1,\beta_3)=f(\beta_2,\beta_4)\,, \quad \text{ for some }  \b _ i > 0 \ \text{with }  \sum _{i = 1  } ^ 4 \b _ i = \pi \,,
\end{equation} 
where $f$ is the function defined in \eqref{f:mf}. 

It is convenient to rewrite $f$ in terms of the variables $u := s +t $ and $v : = s - t$, as
$$ f (s, t)   =   h ( u, v) :=   v \, \frac{1 - \cos u \cos v }{\sin u \sin v }  \,.$$ 
Letting 
$$\overline u _1 = \b _ 1 + \b _ 3\, , \qquad \overline v _ 1:=  \b _ 1 - \b _ 3 \, , \qquad \overline u _2 = \b _2 + \b _ 4\, , \qquad 
\overline v _2 = \b _ 2 - \b _ 4 \,, $$
or equivalently
\begin{equation}\label{f:betas}
\b _ 1 := \frac{\overline u _ 1 + \overline v_1}{2} \, , \qquad \b _ 2 := \frac{\overline u _ 2 + \overline v_2}{2} \, , \qquad 
\b _ 3 := \frac{\overline u _ 1 - \overline v_1}{2}  \, ,  \qquad \b _ 4 := 
\frac{\overline u _ 2 - \overline v_2}{2}\,  , 
\end{equation} 
the inequalities for $\b _i$ in \eqref{f:pbmax} can be reformulated as follows: since
$\b _1, \b _ 3 >  0$, we have $\overline u _ 1 > 0$ and $|\overline v _ 1| <  \overline u _ 1$; since $ \b _2,  \b _ 4 > 0$, 
we have $\overline u _ 2 >  0$ and $|\overline v_ 2 | <  \overline u _ 2$; moreover, 
$ \overline u _ 2 = \pi - \overline u _ 1$. 
We infer that, since $Q$ is assumed to be a nondegenerate optimal quadrilateral, the
corresponding point $(\overline u_1,\overline v_1,\overline v_2)$ belongs to the region
$$\mathcal R:=\Big \{  u _ 1 \in (0, \pi), \ |v_1 | <   u_1\, , \ |v_2|  < \pi - u _ 1 \Big \} \,,$$ 
 and it is a
stationary point for  $h$ under the constraint
$$
h(u_1,v_1)=h(\pi-u_1,v_2).
$$
 Indeed,  the
only active fences are those centred at $P_1$ and $P_2$, and $m^2(Q)=h(u_1,v_1)=h(\pi-u_1,v_2)$.
Moreover, every sufficiently small variation of the cyclic quadrilateral preserving the
relations
$\beta_i>0$, $\sum_{i=1}^4\beta_i=\pi$
corresponds, through the change of variables \eqref{f:betas}, to a variation of
$(u_1,v_1,v_2)$ in the region $\mathcal R$. Along the submanifold where the two active values
remain equal, namely
$h(u_1,v_1)=h(\pi-u_1,v_2)$
the value of $m^2(Q)$ is precisely $h(u_1,v_1)$. Since $Q$ is optimal, this function
cannot have a positive first variation along any admissible direction tangent to the
constraint. Hence $(\overline u_1,\overline v_1,\overline v_2)$ is a constrained stationary point of $h$ under the
constraint $h(u_1,v_1)=h(\pi-u_1,v_2)$.

Therefore, in order to achieve Step 3, it is enough to show that the unique  such 
constrained stationary point  is the point 
$$
\left(\frac{\pi}{2},0,0\right),
$$
where the value of $h$ is equal to $1$.

By the Lagrange multiplier theorem, letting $H ( u _1, v_1, v _2):= h ( u _ 1, v_1)$ and $\Psi ( u _1, v_1, v _2)  = 
h ( u_ 1, v_1) - h  (  \pi - u _ 1 , v_2)$, if $ (\overline u _ 1, \overline  v_1, \overline v _ 2)$
is a stationary point in  $\mathcal R$, there exists some $\lambda \in \R$ such that 
\begin{equation}\label{f:lagrange} 
\nabla H(\overline u _ 1, \overline  v_1, \overline v _ 2)  = \lambda \nabla \Psi (\overline u _ 1, \overline  v_1, \overline v _ 2) \,.
\end{equation}  
Letting
$$ a:= h _ u (\overline u _ 1, \overline  v_1 ) \, , \quad  b:= h _ v (\overline u _ 1, \overline  v_1 ), \quad
c:= h _ u (\pi - \overline u _ 1, \overline  v_2 ) \, , \quad  d:= h _ v (\pi - \overline u _ 1, \overline  v_2 )\, , $$  
  we have 
$$\nabla H(\overline u _ 1, \overline  v_1, \overline v _ 2) = (a, b , 0)
\qquad \text { and  } \qquad \nabla \Psi (\overline u _ 1, \overline  v_1, \overline v _ 2)  = (a+ c, b, -d)\,,
$$ 
so that the vector equation \eqref{f:lagrange} can be rewritten by components as
\begin{equation}\label{f:splitted} a = \lambda ( a + c) \,  , \qquad b = \lambda b \, ,  \qquad 0 = - \lambda d \,.
\end{equation}
From the explicit expression of $h _u$, which reads
$$ h _u ( u, v) =  \frac{v}{\sin v} \frac{\cos v - \cos u } {\sin ^ 2 u }\, , $$ 
we see that $a  >    0$  and $c   >   0$ for any   $ (\overline u _ 1, \overline  v_1, \overline v _ 2) \in \mathcal R$.   In particular, $\nabla \Psi \neq 0$.

We first observe that $\lambda\neq 0$. Indeed, if $\lambda=0$, then the
first equation in \eqref{f:splitted} would give $a=0$, a contradiction.
Hence, from the third equation in \eqref{f:splitted}, we get $d=0$.
Moreover, $\lambda\neq 1$, because otherwise the first equation in
\eqref{f:splitted} would give $c=0$, again a contradiction. Therefore, from
the second equation in \eqref{f:splitted}, we obtain $b=0$.

 We conclude that any critical point in $\mathcal R$ must satisfy
 \begin{equation}\label{f:2eq} h _ v (\overline u _ 1, \overline  v_1 ) =   h _ v (\pi - \overline u _ 1, \overline  v_2 ) = 0 \qquad \text{ and } \qquad 
 h ( \overline u _1, \overline v _1) = h ( \pi - \overline u _ 1, \overline  v_2 ) \,. 
 \end{equation} 
 Let us show that the unique solution is
\begin{equation}\label{f:us}
 (\overline u _ 1, \overline  v_1, \overline v _ 2)  =  \big ( \frac {\pi}{2}, 0 , 0 \big )  \,.
 \end{equation}

We  observe that
$$h _v (u,v)=
\begin{cases}
\displaystyle
\frac{1}{\sin u}
\left[
\frac{\sin v-v\cos v}{\sin^2 v}
\left(1-\cos u\cos v\right)
+
v\cos u
\right]
& \text{ if  } v\neq 0, \\ \noalign{\smallskip} 
0,
& \text{ if  } v=0\, . 
\end{cases}
$$
Hence, 
\begin{equation}\label{f:either}
h _ v (u, v)= 0 \quad \Longleftrightarrow \quad v = 0 \quad \text{ or } \quad -\cos u =  \frac{\sin v - v \cos v}{v - \sin v \cos v}\,.
\end{equation}  
We see in particular that, if  $(u, v)$ is a solution   to $h _ v (u, v)= 0$ with $v \neq 0$, it holds necessarily  $u > \frac{\pi }{2}$ (indeed,  
we have $\cos u <0$ because, for any $v\in(-\pi,\pi)\setminus\{0\}$, the numerator and the denominator
in the quotient appearing in \eqref{f:either} have the same sign).

Therefore,  in order to prove that the unique solution   $ (\overline u _ 1, \overline  v_1, \overline v _ 2)$  to  \eqref{f:2eq} is the one in \eqref{f:us}, 
it is enough to show that $\overline u _ 1 = \frac{\pi}{2}$. 
  
Assume by contradiction that $\overline u _1 \neq \frac {\pi}{2}$, and let us prove that the second equality in \eqref{f:2eq} cannot hold. 
 Just to fix the ideas assume that  $\overline u _ 1 < \frac{\pi}{2}$ 
(the other case $\pi - \overline u _ 1 < \frac {\pi}{2}$ is analogous). Then, as observed above, we have 
$\overline v _ 1 = 0$, and consequently   $$h ( \overline u _ 1 , \overline v _ 1) = h (\overline u _ 1, 0) =  \frac{1 - \cos \overline u _1  }{\sin \overline u _1 } = \tan \big ( \frac{\overline u _1}{2} \big) <1\,. $$ 
Then we distinguish two possibilities: if $\overline v _ 2 = 0$, we have 
 $$h (\pi - \overline u _ 1, \overline v _2 ) =  h (\pi - \overline u _ 1, 0) =  \frac{1 - \cos (\pi - \overline u _ 1 )  }{\sin ( \pi - \overline u _ 1 )  } = \tan \big ( \frac{\pi - \overline u _ 1 }{2} \big) >1 \,;$$
 if $\overline v _ 2 \neq 0$, from the equality $h_v (\pi - \overline u _ 1, \overline v _2 ) = 0$, 
we obtain $$
-\cos(\pi-\overline u_1) =
\frac{\sin \overline v_2-\overline v_2\cos \overline v_2}
{\overline v_2-\sin \overline v_2\cos \overline v_2}\,.$$ Inserting this expression into $ h (\pi - \overline u _ 1, \overline v _2 ) $, 
 some straightforward computations  give 
  $$h (\pi - \overline u _ 1, \overline v _2 ) =  \frac {\overline v_ 2 ^ 2} {\sqrt{ \overline v_ 2 ^ 2 - \sin ^ 2 \overline v _ 2 }} > 1 \,.$$

 In both cases we see that, since $ h ( \overline u _ 1 , \overline v _ 1)   <1$ while  $h (\pi - \overline u _ 1, \overline v _2 ) >1$, the second equality in  
 \eqref{f:2eq} cannot hold. 
 
 Hence $\overline u_1= \frac \pi 2$, and by \eqref{f:either} we get
$\overline v_1=\overline v_2=0$. 
Therefore
$$
m^2(Q)=h(\overline u_1,\overline v_1)=h\left(\frac \pi 2,0\right)=1,
$$
as claimed.

\bigskip 
\underbar{Cases (${\mathcal P}2{\mathcal V}1$)}:  We have to examine separately the subcases $\mathcal V = \{ D \}$ and $\mathcal V = \{ C \}$. 

\medskip 

$\bullet$  {\it $\mathcal P   =  \{P_1, P _2\}$ and  $ \mathcal V  = \{ D \}$. }

 \smallskip
\underbar{Step 1.} We claim that in this case $D$ lies inside the circle passing through $A, B, C$, or on the circle itself. 
 
 \smallskip
 Since $Act ( Q) = \{ P _1, P _ 2 , D\}$, by the assumed optimality of $Q$ and   
 Proposition \ref{p:clarke} there exists $\lambda _i \geq 0$, with 
$\lambda _ 1 + \lambda _ 2 + \lambda _ 3 = 1$,  such that  
$$ \lambda _ 1 \nabla  \phi    _{P _1} (Q)  + \lambda_2 \nabla   \phi  _{P_2} (Q)  + \lambda _ 3 \nabla   \phi   _D (Q) = 0\,.  $$

Also in this case, we are going to consider    two specific perturbations of $Q$ (and hence of  the family of its vertices in $\R^ 8$).

As a first perturbation, we  let $Q _ \e$ be  the quadrilaterals obtained by rotating  $S_2$ around its midpoint. 
Under such perturbation, $\widehat D$ is unchanged, and 
neither 
$\theta _ 1 (  |Q|   + 2 |T_1|)$  nor $|Q|$ are affected at first order. 
Then  we have
$$\lambda _ 2 \frac{d}{ d \e}  \big [ (\theta _ 2 + \e)  \big (  |Q_\e |    + 2 |T _{2, \e} | \big ) \big ] \Big | _{ \e = 0} = 0\,.  $$  
 
 We claim that $\lambda _ 2 \neq 0$. Indeed, assume by contradiction that $\lambda _ 2 = 0$. 
 Then, consider the perturbations of $Q$ obtained by rotating $S_4$ around its midpoint, so that $\widehat D$ is changed into 
 $\widehat D + \e > \widehat D$: since under this perturbation
 $ \phi   _{P_1}$ is unchanged at first order, while the first variation of $  \phi   _{D}$ is strictly positive, 
it follows that $\lambda _ 3 = 0$. But then the simultaneous validity of the  equalities $\lambda _ 2 = \lambda _ 3 = 0$ 
 implies that  $\lambda_1=1$ and hence  that $\nabla\phi_{P_1}(Q)=0$. This gives a contradiction by the same
perturbation argument used in the case
$({\mathcal P}2{\mathcal V}0)$ (where we considered the quadrilateral obtained by rotating $S_3$
around $P_1$).  

Thus we can divide by $\lambda_2$ and,   
    by arguing as in the case analysed above when ${\rm card} (\mathcal P ) = 2$ and  ${\rm card} (\mathcal V ) = 0$, 
we arrive at  the equality  
\begin{equation}\label{f:prodotto1bis} 
  |P _ 2 B| | P _ 2 C | =  \frac{1}{\theta _ 2 } \big ( 1 + 2 |T _2 | \big ) \,. 
  \end{equation} 
  
  As a second perturbation, we  let $Q _ \e$ be  the quadrilaterals obtained by rotating  $S_4$ around its midpoint, 
  so that $\widehat D$ becomes $\widehat D + \e$.  Since as observed above $  \phi    _{P_1}$ remains unchanged at first order, 
  we have 
  
  $$\lambda _2 \frac{d}{ d \e}  \big [ (\theta _ 2 - \e)  \big (   |Q_\e |    + 2 |T _{2, \e} | \big ) \big ] \Big | _{ \e = 0} + \lambda _ 3 =   0\,,  $$ 
 
Taking into account that we have already shown that $\lambda _2 \neq 0$, we infer that 

  $$\frac{d}{ d \e}  \big [ (\theta _ 2 - \e)  \big (  |Q_\e |    + 2 |T _{2, \e} | \big ) \big ] \Big | _{ \e = 0}  = - \frac{  \lambda _ 3 }{\lambda _ 2} \leq 0  
  \,.  $$ 
   
In view of the asymptotic expansion $$T _ { 2, \e} = |T_2| -  
\frac{1}{2}   | P _ 2  A |    | P _ 2  D |  \,   \e + o ( \e)$$  
(which is obtained as in the case ${\rm card} (\mathcal P ) = 2$ and  ${\rm card} (\mathcal V ) = 0$,   just changing the sign of $\e$), 
the above inequality leads to 
\begin{equation}\label{f:prodotto2bis}
|P _ 2 A| | P _ 2 D | \leq  \frac{1}{\theta _ 2 } \big ( 1 + 2 |T _2 | \big ) \,. 
\end{equation} 
 
By comparing \eqref{f:prodotto1bis}  and \eqref{f:prodotto2bis}, we obtain that
$$ |P _ 2 A| | P _ 2 D | \leq   |P _ 2 B| | P _ 2 C |  \,.$$ 
This means that $D$ lies inside the circle passing through $A, B, C$ (if the inequality is strict),  or on the circle itself
(if equality holds). 

 \smallskip
\underbar{Step 2.} Reaching a contradiction. 
 
 \smallskip 
Assume first  that $D$ lies inside the circle through $A, B, C$: let us extend  the segments 
$[A, D], [B, D], [C, D]$ until they meet the circle at points (distinct from $A, B, C$) that we denote respectively by $A'$, $B'$, $C'$, 
see Figure \ref{fig4}, right.  
 
 We observe that
$$\begin{aligned}
& \widehat D = \angle {ADC} = \frac{1}{2} \big (\wideparen{AB} + \wideparen{BC}  + \wideparen{C'B'}    + \wideparen{B' A'} \big ) 
\\ 
& \widehat C = \angle {BCD} = \frac{1}{2}  \big  (\wideparen{AB} + \wideparen {AC'} \big ) \,. 
\end{aligned}
$$ 
Here the first equality follows from the theorem on two chords intersecting inside the circle, 
since $D$ is the intersection of the chords $[A,A']$ and $[C,C']$. 
The second one follows from the equality $\angle BCD=\angle BCC'$, 
and from the inscribed angle theorem applied to $\angle BCC'$, whose intercepted arc is $\wideparen{BC'}$.

Taking into account that $P_2$ is the intersection of the two secants
$P_2AB$ and $P_2C'C$, by the exterior angle theorem we have 
$$\theta _ 2 = \frac{\wideparen{BC}- \wideparen{AC'}}{2}\,. 
$$  
Since $\theta _ 2 >0$, by comparing the above expressions of $\widehat D$ and $\widehat C$, 
we deduce that $\widehat D  > \widehat C$.  
This 
 contradicts 
Remark \ref{r:V1}, due to  the assumption that $D$ is active while $C$ is not.

It remains to consider the case where $D$ lies on the circle through $A,B,C$, so that $Q$ is cyclic. 
In this case, by the inscribed angle theorem, we have 
$$
\begin{aligned} 
& \widehat D=\angle ADC
=
\frac12\bigl(\wideparen{AB}+\wideparen{BC}\bigr),
 \\ 
 & \widehat C=\angle BCD
=
\frac12\bigl(\wideparen{AB}+\wideparen{AD}\bigr)\, , 
\end{aligned} 
$$
where the arcs are those not containing the vertex of the corresponding inscribed angle. 
We infer that 
$$
\widehat D-\widehat C
=
\frac12\bigl(\wideparen{BC}-\wideparen{AD}\bigr)
=
\theta_2
>
0.
$$
where in the second equality we have applied as above the exterior angle theorem. 

Thus, we have $\widehat D>\widehat C$. 
This 
 contradicts  again 
Remark \ref{r:V1}, due to  the assumption that $D$ is active while $C$ is not.

\bigskip 
$\bullet$  {\it $\mathcal P   =  \{P_1, P _2\}$ and  $ \mathcal V  = \{ C \}$. } 
\smallskip

\underbar{Step 1}. It holds that
\begin{equation}\label{f:DC}  |DC| ^ 2 = 
\frac{(\pi - \widehat D) \,  \sin ( \widehat C +  \widehat D)     }{  (\pi - \widehat C  - \widehat D) \, \sin \widehat D \, \sin \widehat C }\,.
\end{equation}

Indeed, the area of the triangle $\triangle(P_2, C, D)$ can be expressed either 
in terms of $|DC|$ and its inner angles by elementary geometry, or in terms of the angle $\theta_2$ and of
the radius $r_2$ of $\gamma _{24}$, by taking into account that  $\gamma _{24}$ bisects  $Q$. This gives 
$$ 
| \triangle(P_2, C, D) |  = |DC| ^ 2 \frac{\sin \widehat C \sin \widehat D}{2  \sin (\widehat C + \widehat D)} = 
\frac {1}{2}  \theta _ 2   r_ 2 ^ 2  + \frac{1}{2} \,. $$ 
We infer that 
\begin{equation}\label{f:lato} 
|DC| ^ 2  = \big (  \theta _ 2   r_ 2 ^ 2  + 1\big )  \frac{ \sin (\widehat C + \widehat  D)}{\sin \widehat C \sin \widehat D} 
\,.
\end{equation}
On the other hand, we have
$$\theta _ 2 r _ 2 = \mathcal H ^ 1 (\gamma _{24}  ) = \mathcal H ^ 1 (\gamma _ { 23}) = \widehat C ^ {\frac 1 2}\,,$$
where the first equality is just the definition of $r_2$, 
the second equality holds since $P_2$ and $C$ are simultaneously active, and the third one is due to Lemma \ref{l:formulas} (i).  
From the sum of the interior angles of  $\triangle(P_2, C, D) $, we see that $\theta _ 2   = \pi - \widehat C - \widehat D$. Inserting the equality
$$ \theta _ 2 r  _2 ^ 2 = \frac{ ( \theta _ 2 r _ 2   ) ^ 2 }{\theta _2} = \frac{  \widehat C  }{\pi - \widehat C - \widehat D }$$ 
into \eqref{f:lato},  we obtain   \eqref{f:DC}.

\medskip 
\underbar{Step 2}. We have
\begin{equation}\label{f:mdel}
m ^ 2 ( Q) = f (\widehat B, \widehat C, \widehat D): =  \theta _ 1 \Big ( 1 + x ^ 2 \frac{\sin \theta _ 1\,  \sin \widehat D}{\sin ( \widehat D - \theta _ 1)} \Big ) \,,
\end{equation} 
where
$\theta _ 1 = \pi - (\widehat B + \widehat C) $, 
  and $x$ is the  smallest positive  solution to the second order equation  
\begin{equation}\label{f:degree2} 
(m-n) x ^ 2  +( 2  m \ell _ 3 )  x + (m \ell _ 3 ^ 2  - 2)  = 0 \,,
\end{equation} 
being $\ell _ 3 =  |DC|$ given by \eqref{f:DC}, and 
$$m=m (\theta_1, \widehat C) = \frac{\sin \theta _1 \, \sin \widehat C}{\sin (\theta _ 1 + \widehat C)} 
 \, , \qquad n:=  n ( \theta _ 1, \widehat D)= \frac{\sin \theta _ 1 \, \sin \widehat D}{\sin (\widehat D - \theta _ 1)}  \,.  $$ 
Indeed, if we let  $x:=  |PD|$, the function $f (\widehat B, \widehat C, \widehat D)$ defined  in \eqref{f:mdel} corresponds exactly to 
$\theta _ 1 ( 1 + 2 |T_1|)$, which by Lemma \ref{l:formulas} (ii) gives $m ^ 2(Q)$, provided $|Q| = 1$.  Note that the expressions of $x$ and of the function $f$ are well defined and have proper meaning for angles $ (\widehat B, \widehat C, \widehat D)$ in a neighbourhood of the optimal quadrilateral. 
Thus,  to achieve Step 2 
we have to show just that the choice of $x$ as the 
smallest  positive solution to \eqref{f:degree2} is the good one which ensures that $|Q| = 1$.    This is readily checked, since imposing that $|Q|= 1$ we 
get
$$1 = |Q| = |\triangle(P_1, B, C)- \triangle (P_1, A, D)|  = \frac{m}{2} (x+\ell _3) ^ 2 - \frac{n}{2} x ^ 2 \,,$$ 
 which corresponds to \eqref{f:degree2}. 

\smallskip
\underbar{Step 3.} Let us show that 
\begin{eqnarray} 
 &   |P _ 2 A| | P _ 2 D | = \displaystyle   \frac{1}{\theta _ 2 } \big ( 1 + 2 |T _2 | \big )  & \label{f:2e} 
 \\ \noalign{\smallskip} 
& 
\widehat B +  \widehat D \leq \pi\,. & \label{f:3e}  
\end{eqnarray} 
  Since $Act ( Q) = \{ P _1, P _ 2 , C\}$, by the assumed optimality of $Q$ and   
 Proposition \ref{p:clarke} there exists $\lambda _i \geq 0$, with 
$\lambda _ 1 + \lambda _ 2 + \lambda _ 3 = 1$,  such that  
$$  \lambda _ 1 \nabla  \phi  _{P _1} (Q)  + \lambda_2 \nabla   \phi  _{P_2} (Q)  + \lambda _ 3  \nabla   \phi  _C (Q) = 0\,.  $$

As usual, we are now going to consider  two specific perturbations of $Q$.

First, we  let $Q _ \e$ be  the quadrilaterals obtained by rotating  $S_4$ around its midpoint:
since  $\widehat C$ is unchanged, and 
neither 
$\theta _ 1 (   |Q|    + 2 |T_1|)$  nor $|Q|$ are affected at first order, we have 
\begin{equation} \label{f:passo} 
\lambda _ 2 \frac{d}{ d \e}  \big [ (\theta _ 2 + \e)  \big (   |Q _\e |     + 2 |T _{2, \e} | \big ) \big ] \Big | _{ \e = 0} = 0\,.  
\end{equation} 
 
 We claim that $\lambda _ 2 \neq 0$. (This is easily checked by arguing by contradiction in a similar way as we did
 in the case $\mathcal P = \{ P _ 1, P _ 2 \}$ and $\mathcal V = \{ D\}$,  by considering this time
 the perturbations of $Q$ obtained by rotating $S_2$ around its midpoint so that $\widehat C$ is changed into $\widehat C + \e$). 
Thus, in  \eqref{f:passo}  we can divide by $\lambda_2$ and,   
    by arguing as in the cases previously  analysed we arrive at the equality  \eqref{f:2e}.
    
Second, if  $Q _ \e$   is   the quadrilateral obtained by rotating  $S_2$ around its midpoint, 
  so that $\widehat C$ becomes $\widehat C - \e$, we obtain 
  $$\lambda _2 \frac{d}{ d \e}  \big [ (\theta _ 2 + \e)  \big (  |Q_\e |   + 2 |T _{2, \e} | \big ) \big ] \Big | _{ \e = 0} - \lambda _ 3 =   0\,,  $$ 
 so that
  $$\frac{d}{ d \e}  \big [ (\theta _ 2 + \e)  \big (   |Q_\e |    + 2 |T _{2, \e} | \big ) \big ] \Big | _{ \e = 0}  = \frac{ \lambda _ 3}{\lambda_2}  \geq   0\,,  $$ 
  
Recalling  the asymptotic expansion (cf. \eqref{f:deriv}) 
$$T _ { 2, \e} = |T_2| -  
\frac{1}{2}   | P _ 2  B |    | P _ 2  C |  \,   \e + o ( \e)\,,$$  
 and taking into account that $ |Q_\e |  = 1 + o (\e)$, 
the above inequality leads to 
\begin{equation}\label{f:prodotto2ter}
|P _ 2 B| | P _ 2 C | \leq  \frac{1}{\theta _ 2 } \big ( 1 + 2 |T _2 | \big )  = |P _ 2 A| | P _ 2 D |  
\,. 
\end{equation} 
Since $P_2$ lies on the two secants $P_2AB$ and $P_2CD$, the inequality
\eqref{f:prodotto2ter}
is equivalent to saying that $C$ belongs to the closed disk bounded by the circle through $A,B,D$. We distinguish two cases. 
If $C$ lies on this circle, then $Q$ is cyclic, and the sum of the two opposite angles is equal to $\pi$. 
If $C$ lies strictly inside the circle, we extend the half-line $P_2C$ until it meets the circle at a point $C'$. 
Then the quadrilateral $A,B,C',D$ is cyclic, while moving the vertex from $C'$ to the interior point $C$ strictly decreases the corresponding angle. 
In both cases we infer that  \eqref{f:3e} is fulfilled.

\smallskip
\underbar{Step 4.} We have  the following identity 
   \begin{equation}\label{f:k1}
 \frac{ \widehat C^2}{\widehat C^ 2 - \theta _ 2 ^ 2} \frac{\sin ^ 2 \theta _ 2}{\theta _ 2 ^ 2}  = 
 \frac{ \sin \widehat B \, \sin \widehat C}{ \sin \widehat A \, \sin \widehat D }\,.
\end{equation} 

Indeed, since $P_2$ and $C$ are active, by Lemma \ref{l:formulas} we have
$\widehat C = \theta _2 ( 1 + 2 |T_2|)$, and hence, using \eqref{f:2e} and the equalities
$$|T_ 2| = \frac{|P _ 2 A| ^ 2}{2} \frac{\sin \theta _ 2 \sin \widehat A}{\sin \widehat B}\, , \qquad 
1 + |T_2| =  \frac{|P _ 2 D| ^ 2}{2} \frac{\sin \theta _ 2 \sin \widehat D}{\sin \widehat C}\,, $$ 
we get 
$$
\Big ( \frac{ \widehat C  }{\theta _2}  \Big )^ 2  = ( 1 + 2 |T_2|) ^ 2 = \theta _ 2 ^ 2  |P _ 2 A| ^2 | P _ 2 D | ^2  =
 \theta _ 2 ^ 2 \ \frac{2 |T_2| \, \sin \widehat B}{\sin \theta_ 2 \, \sin \widehat A} \ 
\frac{2 ( 1 + |T_2| )\, \sin \widehat C}{\sin \theta _ 2 \, \sin \widehat D} \,.
$$ 
Hence, 
 $$
 \frac{  \widehat  C  ^2}{\theta _2 ^4}  \frac{\sin ^ 2 \theta _2}{4 |T_2| ( 1 + |T_2| )  } =   \frac{ \sin \widehat B \, \sin \widehat C}{ \sin \widehat A \, \sin \widehat D }  \,,
$$ 
which gives \eqref{f:k1}  since
$\widehat C^ 2 - \theta _ 2 ^ 2 = 4 \theta _ 2 ^ 2 |T_2| ( 1 + |T_2| )$.

\medskip
 \underbar{Step 5.}  We make the following key observation:   the optimal trapezium $Q^* = A^*B^*C^*D^*$ 
  does not satisfy the equality \eqref{f:k1}. Indeed, since we have 
$$\widehat { D ^ * } = \widehat { C ^*}=\alpha^* \qquad \text{ and }  \qquad
\widehat { A ^ * } =  \widehat { B ^ * } = \pi -  \alpha ^* \,, $$
the left hand side of  \eqref{f:k1}   equals
 $$ 
 \frac{ ( \alpha^*)^2}{(  \alpha^*)^2 - (\pi- 2  \alpha^*)^2}\frac{\sin ^ 2 (\pi- 2 \alpha^*) }{(\pi- 2 \alpha^*)^ 2}\approx 2.68$$
while the right hand side  of  \eqref{f:k1} equals $1$.

\medskip
\underbar{Step 6.}  Reaching a contradiction.

 \smallskip
Recalling \eqref{f:plage} and \eqref{f:3e}, 
 in the current case of study the  angles $\widehat B, \widehat C, \widehat D$ satisfy the following relations:
 \begin{equation}\label{f:relations} 
 \widehat C \in [\alpha^* , 1.28 ] \, ,  \qquad \widehat B  > \widehat C \, , \qquad  \widehat D > \widehat C  \, ,\qquad  \widehat B + \widehat D \leq \pi \,.
 \end{equation} 
 
Moreover, due to  the current labelling of $Q$ as in Definition \ref{d:labels2},   the longest side  is either $[C,D]$ or $[C,B]$, possibly being equal. Assume that 
$|CD| \geq  |CB|$. \EEE

Let 
$\mathcal U$ denote the neighbourhood of $(\pi -  \alpha ^*, \alpha ^*, \alpha ^* )$ 
given by the family of triplets $(\widehat B,\widehat C, \widehat D)$ with
\begin{equation}\label{d10}
\widehat B \in \big  [ \pi -   { \alpha ^*} - 0.13 , \pi -   { \alpha ^*}   \big ] \, , \quad  \widehat C \in 
 \big  [    { \alpha ^*},    { \alpha ^*} + 0.1 \big ] \, , \quad  \widehat D \in  \big  [    { \alpha ^*},  { \alpha ^*} + 0.1 \big ]  \,.
 \end{equation}
 
 We are going to reach a contradiction by showing that, for  $(\widehat B,\widehat C, \widehat D)$  satisfying \eqref{f:relations}:
\begin{itemize}
\item[(i)] if  $(\widehat B,\widehat C, \widehat D) \in \mathcal U$, the identity  \eqref{f:k1} does not hold;
 \smallskip

\item[(ii)]  if $(\widehat B,\widehat C, \widehat D)  \not \in \mathcal U$, $Q$ cannot be optimal 
by virtue of Proposition  \ref{d13}. 
\end{itemize}    
  
\smallskip
 \noindent (i)   
Assume that $(\widehat B,\widehat C, \widehat D) \in \mathcal U$. Since the mapping $x \to \frac{\sin x}{x}$ is decreasing on $(0, \pi/2)$, we have \EEE 
   $$\frac{ \widehat C^2}{\widehat C^ 2 - \theta _ 2 ^ 2} \frac{\sin ^ 2 \theta _ 2}{\theta _ 2 ^ 2} \ge 
   \frac{ (  \alpha^*)^2}{ (3  \alpha^*+0.3-\pi)(\pi- \alpha^*)} \frac{\sin ^ 2 (\pi-2  \alpha^*) }{(\pi-2  \alpha^*)^ 2} >1.201$$
 and
 $$\frac{ \sin \widehat B \, \sin \widehat C}{ \sin \widehat A \, \sin \widehat D } \le \frac{   \sin ( \alpha^*+0.13)\sin ( \alpha^*+0.1)}{ \sin ( \alpha^*-0.13) \, \sin   \alpha^*}<1.195,$$
 so that the identity  \eqref{f:k1} does not hold.
 
 \smallskip
 \noindent (ii)    Assume that $(\widehat B,\widehat C, \widehat D)  \not \in \mathcal U$.

 From scale invariance, it is not restrictive to show that the rescaling of $Q$ obtained by fixing 
 $D=(0,0)$ and $C=(1,0)$ is not optimal.    
  If any of $A$ or $B$ do not belong to
   $\mathcal R _ {A }$ or $\mathcal {\mathcal R}_{B}$, respectively, then $Q$ is not optimal from Proposition   
 \ref{d13}. The remaining possibility is that $A\in \mathcal R _ {A }$ and  $B\in \mathcal {\mathcal R}_{B}$. We have
 $$  
\begin{aligned}  
&  \widehat C \le \arctan\Big (\frac{0.721923828}{1- 0.649902344}\Big)<1.12<\alpha^*+0.1\,, 
\\
\noalign{\medskip} 
&  \widehat D \le \arctan\Big (\frac{0.721923828}{0.349609375}\Big)<1.12<\alpha^*+0.1
\\
\noalign{\medskip} 
 &  \widehat B \ge \arctan\Big (\frac{1- 0.649902344}{0.721923828}\Big)+\arctan\Big (\frac{0.649902344- 0.366210938}{0.721923828-0.692138672}\Big)
 \\
 \noalign{\medskip} 
&  \quad = \pi - \arctan\Big (\frac{0.721923828}{1- 0.649902344}\Big)-\arctan\Big (\frac{0.721923828-0.692138672}{0.649902344- 0.366210938}\Big)
\\
\noalign{\medskip} 
& \quad >\pi-1.224>\pi-\alpha^*-0.13.
\end{aligned} 
$$
 This implies that $(\widehat B,\widehat C, \widehat D)  \in \mathcal U$, against our assumption.

We conclude that
$A\notin{\mathcal R}_{A}$ or $B\notin{\mathcal R}_{B}$.
Hence, $Q$ is not optimal by Proposition \ref{d13}, which applies because,
under the current assumption that
$\mathcal P=\{P_1,P_2\}$, the two candidate values in
\eqref{f:minimum} associated with the pairs of opposite sides
are both equal to $m^2(Q)$.

\bigskip 

\underbar{Cases (${\mathcal P}2{\mathcal V}2$)}:  We have to examine separately the subcases $\mathcal V = \{B,  D \}$ and $\mathcal V = \{ C, D \}$.

\medskip
$\bullet$  {\it $\mathcal P   =  \{P_1, P _2\}$  and $\mathcal V = \{ B, D \}$. }  

\smallskip
\underbar{Step 1.} It holds that
\begin{eqnarray}
& \displaystyle |DC| ^ 2 = 
\frac{(\pi - \widehat C) \,  \sin ( \widehat C +  \widehat D)     }{  (\pi - \widehat C  - \widehat D) \, \sin \widehat D \, \sin \widehat C } = |BC| ^ 2 
& \label{f:DCbis}  
\\ 
& \displaystyle |AD| ^ 2 =  \frac{\sin \big ( \frac{\widehat C}{2} \big )  } {\sin \big ( \frac{\widehat A}{2} \big )  \, \sin (\widehat D) }
& \label{f:AD}   \,.
\end{eqnarray}
To prove the first equality in \eqref{f:DCbis}, we argue exactly as done to prove \eqref{f:DC}, just replacing $C$ by $D$. 
Also the proof of   the second equality in \eqref{f:DCbis} is analogous,  working with the triangle   $\triangle(P_1, B, C)$  in place of
the triangle $\triangle(P_2, C, D)$, exploiting the fact that $B$ and $P_1$ are simultaneously active, and  finally recalling that 
by Remark \ref{r:V1} we have $\widehat B = \widehat D$. 

Using again that $\widehat B = \widehat D$, and that $ |DC|= |BC|$ (from the equality \eqref{f:DCbis} just proved),
we obtain that the quadrilateral $Q$ is symmetric about the 
diagonal $[A, C]$. 
Then the equality \eqref{f:AD} follows by imposing that  the area of 
the triangle $\triangle (A, C, D)$, expressed in terms of $|AD|$ 
and of its inner angles, is equal to $\frac{1}{2}$. 

\medskip
\underbar{Step 2}. We have
$$m ^ 2 ( Q) =   ( \pi - \widehat B - \widehat C) \frac{\sin \widehat B}  {\sin (\widehat B + \widehat C) } \,.$$ 
Indeed, we have
$$\begin{aligned} 
m ^ 2 ( Q) & = \theta _1 (   |Q|  + 2 |T_1| )  =  \theta _1 \Big (  1 + |AD| ^ 2 \frac{\sin \widehat A \, \sin \widehat D }{\sin \theta _1} \Big   ) 
 \\
 & =  \theta _1 \Big (  1 +   \frac{\sin \big ( \frac{\widehat C}{2} \big )  } {\sin \big ( \frac{\widehat A}{2} \big )  \, \sin (\widehat D) }
 \frac{\sin \widehat A \, \sin \widehat D }{\sin \theta _1} \Big   )    
  =  \theta _1 \Big (  1 + 
 \frac{  2 \sin \big ( \frac{\widehat C}{2} \big )   \cos \big ( \frac{\widehat A}{2} \big )      }{\sin \theta _1} \Big   )   
 \\ & =  \theta _1 \Big (  1 + 
 \frac{   \sin \big ( \frac{\widehat C}{2} + \frac{\widehat A}{2} \big )  +   \sin \big ( \frac{\widehat C}{2} - \frac{\widehat A}{2} \big )       }{\sin \theta _1} \Big   )  
  =  \theta _1 \Big (  1 + 
 \frac{   \sin  ( \widehat B )  -    \sin  ( \widehat B + \widehat C  )       }{\sin \theta _1} \Big   )   
 \\
 & = \theta _1 \Big (  1 + 
 \frac{   \sin  ( \widehat B )  -    \sin  ( \theta _1  )       }{\sin \theta _1} \Big   )    =  
 ( \pi - \widehat B - \widehat C) \frac{\sin \widehat B}  {\sin (\widehat B + \widehat C) } \,.
    \end{aligned}
$$
where we have used, in the order, Lemma \ref{l:formulas}, the equality \eqref{f:AD}, elementary trigonometric formulas,
 the equalities $\frac{\widehat C}{2} + \frac{\widehat A}{2}  = \pi - \widehat B$ and $\frac{\widehat C}{2} - \frac{\widehat A}{2}   = \widehat B + \widehat C - \pi$ 
 (obtained by computing the sum of the inner angles  of $Q$), and finally the equality $\widehat B + \widehat C = \pi - \theta _1$
 (obtained by computing the sum of the inner angles  of $\triangle (P _1, B, C)$). 

\medskip 
   \underbar{Step 3}. We have
   $m ^ 2 ( Q) < m ^ 2 ( Q^*)$, against the optimality of $Q$. 
   
   \smallskip
   From Step 2, since the map $ t \mapsto \frac{t}{\sin t}$ is increasing on $(0, \pi)$ and we have 
$0< 2 \widehat B < \widehat B + \widehat C  < \pi $,  it holds that
\begin{equation}\label{f:distintivo}
m ^ 2 ( Q)  \leq   \frac{ ( \pi - 2 \widehat B )  \sin \widehat B}  {\sin (2 \widehat B) }  = \frac{\pi - 2 \widehat B}  { 2 \cos  \widehat B }  = : f ( \widehat B)\,.
\end{equation}  

Recalling \eqref{f:plage}, and since $B \in \mathcal V$,  we have that $\widehat B$ belongs to the interval $I= [1.10 , 1.28 ]$. 

We observe that the map $t \mapsto f ( t)$ is decreasing on such interval. Indeed, we have
$$f' ( t )  = -\frac{1}{\cos t}+\frac{(\pi-2t)\sin t}{2\cos^2 t} < 0 \ \Longleftrightarrow \ g(t):=  \pi-2t   -2 \cot t <0 $$ 
and the inequality $g (t) <0$ holds true on  $I$ since
$$ g(1.28)  < 0 \qquad \text{ and } \qquad g' (t) = 2 \cot ^ 2 t >0 \,.$$ 
We conclude that
$$f (\widehat B ) \leq f (1.10) \leq  1.038 <   m^2  \EEE (Q^*)\,.  $$ 

\bigskip 
 $\bullet$  {\it $\mathcal P   =  \{P_1, P _2\}$  and $\mathcal V = \{ C, D \}$. }  
  
\smallskip We proceed in a similar way as in the previously analysed case when $\mathcal P   =  \{P_1, P _2\}$  and $\mathcal V = \{ C \}$. 
By repeating the arguments of Steps 1 and 2 in that case, and taking into account that, in the current case of study, also $\widehat D$ is active,  we obtain 
$$m^2 ( Q) = f ( \widehat B, \widehat C, \widehat C)\,,$$ 
where  $f$  is the function defined in \eqref{f:mdel}.

Next, we observe that the following relations analogue to 
 \eqref{f:2e}-\eqref{f:3e} hold:
\begin{eqnarray} 
 &  |P _ 1 A| | P _ 1 B | = \displaystyle  \frac{1}{\theta _ 1 } \big ( 1 + 2 |T _1 | \big )  & \label{f:1e} 
\\  \noalign{\smallskip} 
& 
\widehat B +  \widehat C \leq \pi\,. & \label{f:3ebis}  
\end{eqnarray}

Indeed, the inequality \eqref{f:3ebis} is immediately checked by considering the sum of the interior angles of $\triangle ( P_1, B, C)$,  
since $\theta _ 1 >0$. To show the equality \eqref{f:1e} we apply Proposition \ref{p:clarke}:
 since $Act ( Q) = \{ P _1, P _ 2 , C, D\}$, by the assumed optimality of $Q$,
  there exists $\lambda _i \geq 0$, with 
$\sum _{ i = 1} ^ 4 \lambda _ i = 1$,  such that  
$$\lambda _ 1 \nabla  \phi  _{P _1} (Q)  + \lambda_2 \nabla   \phi  _{P_2} (Q)  + \lambda _ 3  \nabla   \phi  _C (Q)+ \lambda _ 4  \nabla   \phi   _D (Q)  = 0\,.$$

Consider then the quadrilaterals $Q _ \e$ obtained by rotating  $S_1$ around its midpoint:
since  $\widehat C$, $\widehat D$  are unchanged, and 
neither 
$\theta _ 2 (    |Q|    + 2 |T_2|)$  nor $|Q|$ are affected at first order, we have 
\begin{equation} \label{f:passobis} 
\lambda _ 1 \frac{d}{ d \e}  \big [ (\theta _ 1 + \e)  \big (   |Q _ \e |  + 2 |T _{1, \e} | \big ) \big ] \Big | _{ \e = 0} = 0\,.  
\end{equation} 
 
Then \eqref{f:1e} follows from \eqref{f:passobis} by the same computations done in the previously analysed cases, provided we can show that 
$\lambda _ 1 \neq 0$. This  follows from a contradiction argument: assuming  that $\lambda _ 1 = 0$, and considering
 the perturbations of $Q$ obtained by rotating $S_3$ around its midpoint, we obtain
 $\lambda _ 3 = \lambda _ 4$. This forces  the equality
 $\lambda _ 2 \nabla   \phi    _{P_2}(Q) + \lambda _ 3 ( \nabla   \phi   _ C ( Q)     +    \nabla   \phi  _ D ( Q)) = 0 $ to hold under 
  an arbitrary perturbation of  $Q$. In particular, by considering a parallel inward movement of $S_3$, 
  since it does not affect $  \phi _C,  \phi  _D$, while it changes $  \phi  _{P_2}$ at first order, we obtain $\lambda _ 2 = 0$. But then 
  the simultaneous vanishing of $\lambda _1$ and $\lambda _2$  forces the equality 
  $\lambda _ 3 ( \nabla  \phi   _ C ( Q)    +   \nabla   \phi   _ D ( Q)) = 0 $ to hold under 
  an arbitrary perturbation of  $Q$.   Since $\lambda_1=\lambda_2=0$ and $\lambda_3=\lambda_4$, we have
$\lambda_3=\lambda_4=\frac12$, and hence
$\nabla\phi_C(Q)+\nabla\phi_D(Q)=0$. 
This is impossible: indeed, rotating $S_2$ around its midpoint so that
$\widehat C$ is changed into $\widehat C+\e$ leaves $\widehat D$
unchanged, and therefore
$$
\frac{d}{d\e}
\big(\phi_C(Q_\e)+\phi_D(Q_\e)\big)\Big|_{\e=0}=1\neq0.
$$

Finally,  by exploiting \eqref{f:1e}, we repeat also the argument of Step 4,  and we arrive, in place of \eqref{f:k1},  at
 \begin{equation}\label{f:kk1}
 \frac{ \widehat C^2}{\widehat C^ 2 - \theta _ 1 ^ 2} \frac{\sin ^ 2 \theta _ 1}{\theta _ 1 ^ 2}  = 
 \frac{ \sin ^2 \widehat C}{ \sin \widehat A \, \sin \widehat B }\,.
\end{equation} 
We observe that, this time, the above relation is {\it true} for the optimal trapezium $Q^*$,  in the sense that it 
is asymptotically satisfied  in the limit as $\theta_ 1 \to 0$, replacing $\widehat C$ by $\widehat { C ^*}$ and $\widehat A,  \widehat B$ by $\pi -  \widehat {C ^*}$. 
Nevertheless we observe that, since
$$\widehat B = \pi - \widehat C-  \theta _ 1\,  \qquad \text{ and } \qquad  \widehat A = 2 \pi - \widehat B - 2 \widehat C = \pi - \widehat C + \theta _ 1 \,,  $$ 
we have 
$$\sin ( \widehat B ) = \sin ( \widehat C +  \theta _ 1) \qquad   \text{ and } \qquad  \sin ( \widehat A   ) = \sin ( \widehat C -  \theta _ 1  ) \,.$$ 
Inserting these identities into \eqref{f:kk1}, such equality becomes
 \begin{equation}\label{f:kk1bis} 
 \theta _ 1 ^ 2 - \cos ^ 2 (\theta _1)  \sin ^ 2 (\theta _1) - \frac{\theta _ 1 ^ 4}{\widehat C ^ 2}  + \cot ^ 2(\widehat C )  \sin ^ 4 (\theta _1) = 0 \,. 
 \end{equation}  
which is in turn equivalent to 
 \begin{equation}\label{f:kk1bis2} \theta _ 1 ^ 2\Big (1-\frac{\sin^2 \theta_1}{\theta_1^2}\Big ) + \frac{\theta_1^4}{\sin^2 \widehat C} \Big (\frac{\sin^4 \theta_1}{\theta_1^4} - \frac{\sin^2 \widehat C}{\widehat C^2}\Big)=0.
  \end{equation}

Let 
$\mathcal U$ denote the neighbourhood of $(\pi -  \alpha ^*, \alpha ^*)$ 
given by the family of pairs $(\widehat B,\widehat C)$ with
 \begin{equation}\label{d12}
 \widehat B \in \big  [ \pi -   \alpha^* - 0.13 , \pi -   \alpha^*   \big ] \, , \quad  \widehat C \in 
 \big  [   \alpha^*,   \alpha^* + 0.1 \big ]  \,.
 \end{equation}
 
 We are going to reach a contradiction by showing that:
 \begin{itemize}
\item[(i)] if  $(\widehat B,\widehat C) \in \mathcal U$, the identity   \eqref{f:kk1bis2} does not hold;
 \smallskip

\item[(ii)]  if $(\widehat B,\widehat C)  \not \in \mathcal U$, $Q$ cannot be optimal 
by virtue of Proposition \ref{d13}. 
\end{itemize}

 \noindent (i) Assume that $(\widehat B,\widehat C) \in \mathcal U$.  
Then the identity \eqref{f:kk1bis2} cannot hold because, 
 on the one hand, 
 $$\forall \theta _1 \in (0, \frac \pi2),\; \; \; \; 1-\frac{\sin^2 \theta_1}{\theta_1^2} \ge 0,$$
 while, on the other hand, 
 $$\forall \theta_1\in (0, 0.13],\;\; \frac{\sin^4 \theta_1}{\theta_1^4} \ge \frac{\sin^4 0.13}{0.13^4}>0.98$$
 and
 $$\forall  \widehat C \in [\alpha^*, \alpha^*+0.1), \;\;\;\; \frac{\sin^2 \widehat C}{\widehat C^2} <\frac{\sin^2 \alpha^*}{(\alpha^* )^2}<0.66\,. $$

  \noindent (ii)  Assume that $(\widehat B,\widehat C) 
 \not\in \mathcal U$.  If $ |CD| \ge |CB|$, then the fact that $Q$ cannot be optimal follows by repeating verbatim the argument used to prove the assertion (ii) in Step 6 of  the case  $\mathcal P   =  \{P_1, P _2\}$ and  $ \mathcal V  = \{ C \}$.     If $|CD| < 
 |CB|$, we switch the labels $D$ and $B$, we align the new side $[C,D]$ 
(formerly $[B, C]$)  on the horizontal axis, and we rescale $Q$ so that such side has unit length. 
Then the fact that this rescaling of $Q$ is not optimal  follows from Proposition \ref{d13}, since there is no quadrilateral, with $\widehat B= \widehat C$, such that 
 $A \in \mathcal R_{A^*}$   and $B \in {\mathcal R}_{B^*}$. 
(Notice that Proposition \ref{d13} can be applied since
$\mathcal P=\{P_1,P_2\}$; indeed, before the relabelling the two
candidate values in \eqref{f:minimum} associated with the pairs
of opposite sides are both equal to $m^2(Q)$, while
the relabelling only interchanges the two pairs of opposite sides,
and the subsequent rescaling preserves their equality.)  
 
 \medskip

 \bigskip 
\underbar{Case (${\mathcal P}2{\mathcal V}3$)}:  {\it $\mathcal P   =  \{P_1, P _2\}$    and  $\mathcal V = \{ B, C, D \}$.  }   

\smallskip

  We can repeat the same proof adopted when 
  $ {card}  (\mathcal P )  = 2$ and  $ {card}  (\mathcal V ) = 2$,  in the case
 $\mathcal V = \{ B, D \}$. The unique difference, which does not affect the conclusion,  
 is that the inequality \eqref{f:distintivo} holds with equality sign.

\bigskip\bigskip

\section{Proof of Theorem  \ref{t:Qstar} }\label{sec:trap}
 
 Assume that $Q$ is a solution to problem \eqref{f:pb}, and assume it has exactly two parallel sides. 
    Below we examine separately each of the configurations 
in the table given in Section \ref{sec:setup},   
    and we show that all of them lead to  a contradiction, except for the last one, which leads to identify $Q$ with $Q^*$. 
    As usual, we work under the not restrictive normalization   $|Q | = 1$.

\medskip 
\underbar{Case $({\mathcal P}1{\mathcal V}2$)}:   {\it $\mathcal P = \{P _ 1 ^ \infty\}$  and   $\mathcal V =\big \{C, D \big \}$}.

\smallskip 
We can argue as done    to handle the case   $({\mathcal P}1{\mathcal V}2)$ 
 in Section \ref{sec:notrap}. \EEE 

\medskip
\underbar{Case $({\mathcal P}2{\mathcal V}0$)}:     {\it $\mathcal P = \{P _ 1 ^ \infty, P_2\}$ and $\mathcal V = \emptyset$}.

\smallskip

Denoting by $ d  $ the distance between the two parallel sides $S_1$ and $S_3$, 
we have 
\begin{equation}\label{f:dueemme} 
m^2(Q)=d^2 = \theta_2 (1+2|T_2|)
=\theta _2 \left(1+\frac{\ell_1^2}{S}\right)\,.
\end{equation} 
The triangle $T_2$ has base $S_1$, base angles $\widehat D$ and $\widehat C$, and opening angle
$\theta _2=\pi-\widehat C- \widehat D$. Thus, we have
$$
|T_2|=\frac{\ell_1^2}{2}\frac{\sin \widehat C \sin \widehat D}{\sin(\widehat C +\widehat D)} = \frac{\ell_1^2}{2S} \,,
$$
where
$$
S:=\cot \widehat C+\cot \widehat D
$$
(in particular, $S>0$). Since
$$
\ell_3-\ell_1=d(\cot \widehat C+\cot \widehat D)=dS\,,
$$
and, from the equality  $|Q|=1$, 
$$
1=\frac{\ell_1+\ell_3}{2}\,d
=\ell_1d+\frac{d^2S}{2}\,,
$$
the length $\ell _1$ can be computed in terms of $d$ and $S$ as 
$$
\ell_1=\frac1d-\frac{dS}{2}.
$$

Substituting the above expression of $\ell_1$ into \eqref{f:dueemme}, we find
$$
d^2=\theta _2 \left(1+\frac1S\left(\frac1d-\frac{dS}{2}\right)^2\right).
$$
Therefore
$$
d^2=\theta _2 \left(\frac1{Sd^2}+\frac{Sd^2}{4}\right),
$$
and consequently
$$
d^4=\frac{\theta _2}{S\left(1-\frac{\theta _2 S}{4}\right)}.
$$
We observe that, for fixed $\theta_2 >0$, the function
$$
S\mapsto \frac{\theta_2}{S\left(1-\frac{\theta_2 S}{4}\right)}
$$
is strictly decreasing whenever $\theta_2 S<2$.  Let us check that the latter inequality is satisfied. 
 Denoting by $\alpha^*$ the acute angle of the optimal trapezium $Q^*$,   we have
\begin{equation}\label{f:minalfa} 
\min\{\widehat C,\widehat D\} > m^2(Q) \ge m^2(Q^*)=\alpha^*\,, 
\end{equation}
where the first inequality holds since $\mathcal V=\emptyset$,  and the second one by the assumed optimality of $Q$. 
Consequently we have
$$
\theta _2 =\pi- \widehat C - \widehat D \leq \pi - 2 \alpha ^* .
$$
Moreover, since the cotangent is decreasing on $(0,\pi)$, we get
$$
S=\cot \widehat C+\cot \widehat D \leq 2\cot \alpha^* .
$$
It follows that
$$
\theta_2 S
\leq 2(\pi-2\alpha^*)\cot\alpha^*<2.
$$
The last inequality follows from the defining equation of $\alpha^*$, or equivalently by a direct computation at the value of the acute angle of $Q^*$.

We claim that
\begin{equation}\label{f:claimS}
S\geq 2\cot y,
\qquad \text{ with } 
y:=\frac{\widehat C+\widehat D}{2}.
\end{equation}
Indeed, since $\widehat C,\widehat D\in(0,\pi)$ and
$\widehat C+\widehat D=2y<\pi$, we have
$$
S= \cot \widehat C+\cot \widehat D =
\frac{\sin(\widehat C+\widehat D)}
{\sin\widehat C\,\sin\widehat D}
=\frac{\sin(2y)}
{\sin\widehat C\,\sin\widehat D}\,.$$
Moreover,
$$
\sin\widehat C\,\sin\widehat D
= \frac{\cos(\widehat C-\widehat D)-\cos(\widehat C+\widehat D)}{2}
\leq
\frac{1-\cos(2y)}{2} =
\sin^2 y\,.$$ 
Using \eqref{f:claimS} and 
the equality 
$$
\theta_2=\pi-\widehat C-\widehat D=\pi-2y,
$$
we infer that
$$
d^4
=
\frac{\theta _2}{S\left(1-\frac{\theta _2 S}{4}\right)}
\leq
 \frac{\pi-2y}{2\cot y\left(1-\frac{(\pi-2y)\cot y}{2}\right)} =: \Psi (y)\,.
$$

From \eqref{f:minalfa}, we have $y \in (\alpha^*,\pi/2)$.  We observe that the function
$
y\mapsto \Psi (y) $
is strictly decreasing on $[\alpha^*,\pi/2)$. Indeed, setting
$r:=\pi-2y$,
it can be rewritten as
$$
\frac{r}{2\tan(r/2)-r\tan^2(r/2)}\,, 
$$
which  is increasing with respect to $r$ on $(0, \pi - 2 \alpha ^ *)$, 
 as follows by direct differentiation, 
since $(0, \pi - 2 \alpha ^ *) \subset (0, \frac{\pi}{2})\,. $ 

We deduce that
$$
d ^ 4  < \Psi (\alpha ^* ) = \frac{\pi-2\alpha^*}{2\cot\alpha^*
\left(1-\frac{(\pi-2\alpha^*)\cot\alpha^*}{2}\right)}
=
(\alpha^*)^2\,,
$$
where the last equality holds by the definition of $\alpha ^*$. 

This contradicts
$$
m^2(Q)=d^2\geq m^2(Q^*)=\alpha^*.
$$

\bigskip
\underbar{Case $({\mathcal P}2{\mathcal V}1$)}:   {\it $\mathcal P=\{P_1^\infty,P_2\}$ and $\mathcal V=\{D\}$.}

\smallskip 
We can repeat verbatim the argument used to handle the previous case, because therein 
the condition $\mathcal V=\emptyset$ was used only to write \eqref{f:minalfa}. 
In the present case, since $D\in\mathcal V$ whereas $C\notin\mathcal V$, by Lemma 
\ref{l:formulas} and Remark \ref{r:V2} we have
\begin{equation}\label{f:minalfa2} 
\widehat C>\widehat D=m^2(Q)\ge m^2(Q^*)=\alpha^* .
\end{equation}
This is sufficient to ensure that, setting 
$
y:=\frac{\widehat C+\widehat D}{2},
$
we still have $y>\alpha^*$. Moreover, since $\theta_2=\pi-\widehat C-\widehat D>0$, we still have 
$y<\pi/2$. Hence $y\in(\alpha^*,\pi/2)$, and the same argument as above gives again a contradiction.

\bigskip 
\underbar{Case $({\mathcal P}2{\mathcal V}2$)}:   {\it  $\mathcal P = \{P _ 1 ^ \infty, P_2\}$ and   $\mathcal V = \big \{C, D \big \}$}. 
 
 \smallskip
Since $C,D\in \mathcal V$,   we have that $Q$ is an isosceles trapezium, with
$$
\alpha:= \widehat C=\widehat D  <
\widehat A=\widehat B = \pi - \alpha\,.
$$

Let $\ d  $ be the distance between the two parallel sides $S_1$ and $S_3$. Since $P^\infty_1\in \mathcal P$, 
and $C, D \in \mathcal V$, by Lemma \ref{l:formulas} we have 
\begin{equation}\label{f:m1bis} 
m^2(Q)=\alpha  = d ^ 2\,.
\end{equation} 

Since $Q$ is isosceles with acute base angles equal to $\alpha$,  
 we have
$$
\ell_3-\ell_1=2d\cot\alpha .
$$
Moreover, from $|Q|=1$,
$$
1=\frac{\ell_1+\ell_3}{2}\, d .
$$
The previous three equations allow to compute $\ell _1$ in terms of $\alpha$: 
$$
\ell_1=\frac{1}{\sqrt\alpha}-\sqrt\alpha\cot\alpha
=\frac{1-\alpha\cot\alpha}{\sqrt\alpha}.
$$

We now use the fact that $P_2\in \mathcal P$.  We have 
\begin{equation}\label{f:m2bis} 
m^2(Q)=\theta_2\bigl(  |Q|   +2|T_2|\bigr) = \theta _ 2 \Big (  1 +  \frac{\ell_1^2}{2}\tan\alpha  \Big )  \,,
\end{equation} 
where the first equality holds by Lemma \ref{l:formulas} (ii), and the second one since 
$T_2$ is an isosceles triangle on the base $S_1$, with base angles equal to $\alpha$, so that its opening angle at $P _ 2$ is  
$\theta_2=\pi-2\alpha$.  

By combining \eqref{f:m1bis} and \eqref{f:m2bis}, and  substituting the above expression of $\ell_1$, we obtain 
$$
\alpha=(\pi-2\alpha)\left(1+\frac{\ell_1^2}{2}\tan\alpha\right) =(\pi-2\alpha)\left(1+\frac{\tan\alpha}{2\alpha}(1-\alpha\cot\alpha)^2\right).
 $$

Some straightforward computations show that the above equation is equivalent to 
the equation which characterizes the acute base angle of the optimal trapezium $Q^*$, namely
 \begin{equation}\label{f:alfastar} 
\left(\frac\pi2-\alpha\right)(\tan^2\alpha+\alpha^2)
=\alpha^2\tan\alpha \,.\end{equation}

Provided we show that 
 the above equation has a unique solution  $  \alpha ^* $ in the admissible interval $\alpha\in(0,\pi/2)$, 
we have that, up to rigid motions, $Q= Q^*$.  Indeed, we have  
$$  
h=\sqrt\alpha ^*,\qquad
\ell_1=\frac{1-\alpha ^* \cot\alpha ^* }{\sqrt{\alpha ^*}},\qquad
\ell_3=\frac{1+\alpha^* \cot{\alpha ^* }}{\sqrt{\alpha ^*}}.  
$$
 
 Therefore,  our proof is achieved thanks to the following
 
 \smallskip
 {\it Claim}: Equation \eqref{f:alfastar} admits a unique solution in the admissible interval $\alpha\in(0,\pi/2)$.  
  
 Namely,  
 set
$$
F(\alpha):=\left(\frac{\pi}{2}-\alpha\right)(\tan^2\alpha+\alpha^2)-\alpha^2\tan\alpha .
$$
  Any zero of $F$ belongs necessarily to $(\pi/3,\pi/2)$. Indeed, if $F(\alpha)=0$, then
$$
\alpha^2\tan\alpha=\left(\frac{\pi}{2}-\alpha\right)(\tan^2\alpha+\alpha^2).
$$
Since $\tan\alpha>\alpha$ for $\alpha\in(0,\pi/2)$, we have
$\tan^2\alpha+\alpha^2>2\alpha\tan\alpha\,,$
and hence
$$
\alpha^2\tan\alpha>2\alpha\left(\frac{\pi}{2}-\alpha\right)\tan\alpha,
$$
so that $\alpha>\pi/3$.
Now, a direct differentiation, followed by elementary simplifications using
$\alpha\in(\pi/3,\pi/2)$, show that $F'(\alpha)<0$ throughout this interval, so that  $F$ has at most one zero in $(\pi/3,\pi/2)$.
 Since
$$
F\left(\frac{\pi}{3}\right)>0 
\quad \text{ and } \quad 
\lim_{\alpha\to(\pi/2)^-}F(\alpha)=-\infty,
$$
such a zero exists and is unique.   Thus the claim holds true, and our proof is achieved.  
 \qed

\section{Proof of Proposition \ref{d13} }\label{d11}

We use a computational framework based on interval arithmetic, which allows us to prove that quadrilaterals lying sufficiently far (quantified explicitly) from the conjectured optimal trapezium cannot be optimal. Techniques of this kind are commonly referred to in the literature as certified computing, validated numerics, or interval arithmetic. Introductory material and further references can be found in \cite{Rump2010,Tucker2011}.

In interval arithmetic, floating-point
numbers $x$ are replaced by intervals, denoted by $[x]$, and arithmetic
operations on intervals are defined so that the resulting interval contains
all possible values obtained by applying the operation to
elements of the input intervals. For instance, the product interval
$[x][y]$ contains every product $st$ with $s\in[x]$ and $t\in[y]$.

From a mathematical point of view, computations based on validated numerics
can therefore provide rigorous certificates that a given quantity lies below
a prescribed threshold, or that two numerical quantities are distinct. We
refer to the aforementioned works for further details and applications.

Several numerical packages implement interval arithmetic. We mention INTLAB
\cite{Rump2010} and the open-source FLINT library
\begin{center}
	\href{https://flintlib.org/}{\nolinkurl{https://flintlib.org/}}
\end{center}
which is used in the present work.

The space of quadrilaterals with fixed area, identified up to rigid motions, has four degrees 
of freedom. 
Since working under a fixed-area constraint is more challenging for a 
quadrilateral exploration algorithm, 
we prefer to fix the endpoints $C,D$ of the longest side 
of the quadrilateral by setting $D=(0,0)$ and $C=(1,0)$, so that 
$|CD|=1$. 
The assumption that $[C, D]$ is the longest side constrains the remaining 
vertices $A,B$ to lie in the unit half-disks centered at $D, C$, 
respectively.

 For any such quadrilateral, formulas \eqref{eq:fence-formulas} 
provide the six candidate values. 
The formulas for $\phi _A, 
\phi _B, \phi _C$, and $\phi _D$  are straightforward and do not cause 
numerical difficulties, except when the area is small, a case that 
can be  excluded a priori thanks to Lemma \ref{l:existence} and its proof. Concerning the formulas for $\phi _{P_1}$ and $\phi _{P_2}$,    we distinguish two 
cases:	

\smallskip 
\begin{itemize}
		\item[--] When a pair of opposite sides, either  $(S_1, S_3)$ or  $(S_2, S_4)$,  is  {\it  ``far from being parallel'':}  
the corresponding point  $P_i$  is computed as the intersection of their supporting lines, 
and  the corresponding value $\phi_{P_i}$ is obtained from
\eqref{eq:fence-formulas}.

		\item [--] When a pair of opposite sides,    say  $(S_1, S_3)$,      
		are {\it almost parallel},  their 
		 intersection point   $P_1$   cannot be reliably computed numerically.		  
		  Then the formula  for $\phi _{P_1}$ is recast as follows: 
		 letting $L_1$  and $L_3$ be the supporting lines of $S_1$ and $S_3$, 
		 and letting ${X_1 , Y_1} $  lie on $L_1  $ and  $X_3 ,Y_3 $    on $ L _3 $, we have: 
		 
		\[  \phi_{P_1}    =
		 \frac{\theta }{2\sin \theta }  
				\frac{d(X_1,  L_3  )d(Y_3,    L_1  )+d(Y_1,  L _3   )d(X_3,   L_1 )}{|Q|}.\]
		This formula is stable near $ \theta  = 0$, where the series expansion of $ \theta \mapsto \frac{\theta}{\sin \theta}  $ is used in the numerical computations.
	\end{itemize}
Then the six candidate values are evaluated on rectangular boxes using
interval arithmetic. Recall that, as observed above, quadrilaterals with
sufficiently small area cannot be competitors. Moreover, by the hypothesis
of Proposition \ref{d13}, the two candidate values associated with the pairs
of opposite sides must coincide.

 Then the {\bf certification process} proceeds as follows. 

First of all we pick a threshold 
$\sigma$ 
smaller than the conjectured value of the maximal shortest fence, 
namely $1.049685815487323$. 
In the numerical simulations we take 
$\sigma = 1.0496$. 
We work with squared fence formulas, thus avoiding unnecessary 
square root computations. The input threshold $\sigma$ is, on the other hand, the fixed threshold for $m$.

\begin{itemize}[noitemsep]

\item We start from the initial rectangles
${\mathcal R} _ {A}  = [-1,1]\times[0,1]$
and
${\mathcal R} _ {B}  = [0,2]\times[0,1]$.
These rectangles are guaranteed to contain the vertices $A,B$ of any
quadrilateral $Q$ with longest side  $[C, D]$, being $D = (0 , 0)$ and  $C = (1 , 0)$.

\item Given a pair of rectangles
${\mathcal R} _ {A} 
= [a_1^-,a_1^+]\times[a_2^-,a_2^+]$
and
${\mathcal R} _ {B} 
= [b_1^-,b_1^+]\times[b_2^-,b_2^+]$,
we evaluate the six candidate values over the interval box 
${\mathcal R} _ {A} 
\times
{\mathcal R} _ {B}$.
If the smallest of the resulting upper bounds is at most the prescribed
threshold, then the pair
${\mathcal R} _ {A} 
\times
{\mathcal R} _ {B} $
is {\bf discarded}: no competitor for the quadrilateral with maximal
shortest fence can lie in that region.
Otherwise, we say that the rectangle pair {\bf survives} the test.

\item For a {\bf survivor} pair, we compute the scaled widths
\[
\frac{a_1^+-a_1^-}{2},
\quad
a_2^+-a_2^-,
\quad
\frac{b_1^+-b_1^-}{2},
\quad
b_2^+-b_2^-.
\]
The coordinate with the largest scaled width is bisected.
If an $a$ coordinate is selected, only
${\mathcal R} _ {A} $
is modified; if a $b$ coordinate is selected, only
${\mathcal R} _ {B} $
is modified.

\item A box is split until one of the following occurs:

\begin{itemize}

\item it is {\bf discarded} as certainly inadmissible:
one side length is definitely greater than $1$, or convexity is
violated;

\item it is {\bf certified at most $\sigma$}:
the upper bound of one of the corresponding interval enclosures
is at most $\sigma$;

\item it is {\bf certified low/flat area}:
the quadrilaterals in the corresponding region have area too small to
be competitors, by the argument given above (estimate \eqref{f:estimatearea} in the proof of Lemma \ref{l:existence});

\item it is {\bf certified incompatible}:
if the two opposite-side candidate values give non-overlapping
intervals, the quadrilaterals in the box cannot satisfy the equality
assumption in Proposition \ref{d13}.

\item it reaches {\bf a width floor}, prescribed as a parameter:
if the maximum scaled width is smaller or equal to the
\texttt{wfloor} parameter given as input to the algorithm, then the box
is labelled a {\bf survivor}, since it could not be eliminated by any
of the previous tests.

\end{itemize}

\end{itemize} 

{\bf Experiments and results.} The threshold used in the computations is
$\sigma = 1.0496$.
The initial computations are performed with the width-floor parameter
\texttt{wfloor=0.025}, followed by six successive refinements.
The results are summarized in Table \ref{tab:results-refinements}.
The reported computation times refer to a computer equipped with
a 6-core Intel i7 processor and 32 GB of RAM. 

\begin{table} \label{tab:results-refinements}
	\scriptsize
	\begin{tabular}{lrrrrrrrrr}
		\hline
		Run & \texttt{wfloor} & Time (s) & Leaves & Discarded & Certified &
		Flat &  Incompat   & Survivors & Volume \\
		\hline
		Base & 0.025 & 79.139 & 215758 & 11555 & 116319 & 213 & 17884 & 69787 &
		\(1.66{\times}10^{-2}\) \\
		Refine 1 & 0.0125 & 117.585 & 360798 & 2967 & 282735 & 0 & 50065 & 25031 &
		\(3.72{\times}10^{-4}\) \\
		Refine 2 & 0.00625 & 35.091 & 119594 & 28 & 51386 & 0 & 55878 & 12302 &
		\(1.14{\times}10^{-5}\) \\
		Refine 3 & 0.003125 & 19.360 & 66134 & 0 & 17312 & 0 & 39816 & 9006 &
		\(5.24{\times}10^{-7}\) \\
		Refine 4 & 0.0015625 & 14.399 & 49864 & 0 & 10580 & 0 & 31046 & 8238 &
		\(2.99{\times}10^{-8}\) \\
		Refine 5 & 0.00078125 & 14.503 & 50504 & 0 & 9696 & 0 & 29790 & 11018 &
		\(2.50{\times}10^{-9}\) \\
		Refine 6 & 0.000390625 & 22.381 & 72276 & 0 & 12739 & 0 & 41012 & 18525 &
		\(2.63{\times}10^{-10}\) \\
		\hline
	\end{tabular}
	
	\smallskip
	\caption{Numerical certification results: base run followed by six refinements. The counts for the different certification flags are shown.}
\end{table}

From a quantitative point of view, the total volume of the ``survivor"
rectangle pairs is less relevant than their distance from the conjectured
optimum. For each of the vertices $A$ and $B$, we consider the smallest
rectangle containing all the corresponding survivor rectangles. The resulting
rectangles are shown in Figure \ref{fig:end-rectangles}.

Any quadrilateral with {\bf at least one of the vertices} $A$ or $B$
outside these rectangles has shortest fence no larger than
$\sigma = 1.0496$, or fails to satisfy the equality assumption
in Proposition \ref{d13}. 

\begin{figure}
	\centering 
	\includegraphics[width=0.99\textwidth]{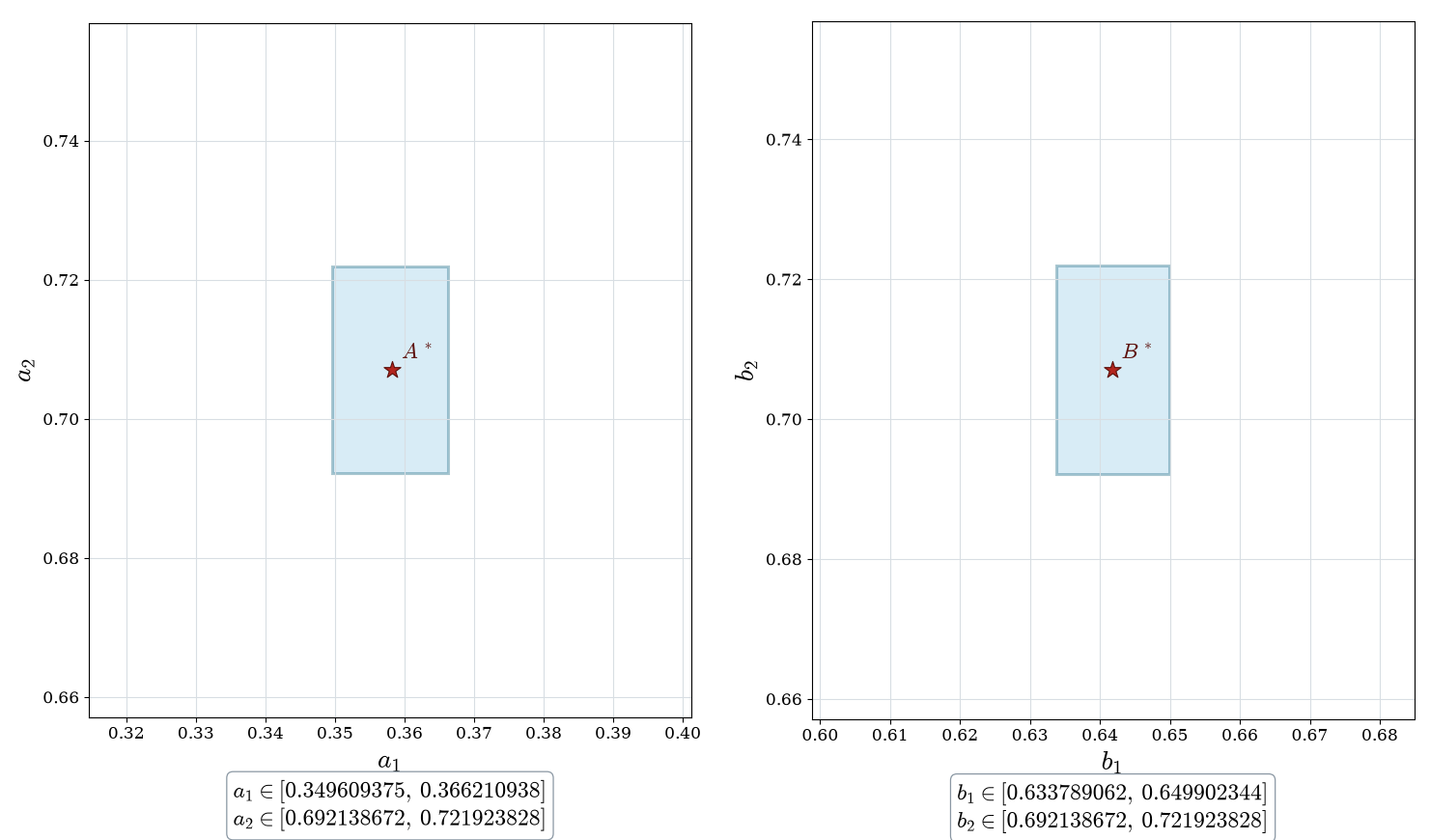}
	\caption{Rectangles containing survivor configurations for the certification procedures: if $A$ or $B$ fall outside the  illustrated rectangles, then $ABCD$ has shortest fence at most $\sigma = 1.0496$ or does not verify the equality assumption in Proposition \ref{d13}.}
	\label{fig:end-rectangles}
\end{figure}

The survivor union is contained in the product
${\mathcal R} _ {A}  \times {\mathcal R} _ {B} $, 
where
\[
\begin{aligned}
	{\mathcal R} _ {A}   &=[0.349609375,\ 0.366210938]\times[0.692138672,\ 0.721923828],\\
	{\mathcal R} _ {B}   &=[0.633789062,\ 0.649902344]\times[0.692138672,\ 0.721923828].
\end{aligned}
\]

The code allowing to do the certification procedure is available at the following Github repository:
\begin{center}
	\href{https://github.com/beniamin-bogosel/TrapeziumFence}{\nolinkurl{https://github.com/beniamin-bogosel/TrapeziumFence}}
\end{center}
It has a full documentation indicating commands used to obtain the results reported in this article. 

{\bf Additional Intlab implementation.} The FLINT/Arb implementation referenced above is efficient and certifies enclosures needed in Proposition \ref{d13}. We also implemented the certification procedure in the Matlab toolbox INTLAB \cite{Rump2010}. The equivalent code being slower to execute than the FLINT/Arb code, we introduced additional theoretical reductions further reducing the search space. These can be found in the \texttt{Theory} subfolder of the repository
\begin{center}
\href{https://github.com/beniamin-bogosel/TrapeziumFence/tree/master/Intlab}{\nolinkurl{https://github.com/beniamin-bogosel/TrapeziumFence/tree/master/Intlab}}
\end{center}

 \section*{Acknowledgments}
  We thank Andrea Cianchi for kindly informing us about a misprint in the expression given in \cite{cianchi89BUMI} for $N$ odd  of the map  $f(N)$ in \eqref{f:mapf},   and about the corrected formula contained in 
\cite{cianchitesi}.

\section*{
Conflict of interest and data availability.} The authors declare that they have no conflict of interest.
No datasets were generated or analyzed during the current study. Codes used for the certification process are made available in the linked repositories.

\end{document}